\newcommand{\SSS}{\mathbb{S}}
\newcommand{\C}{\mathbb{C}}
\newcommand{\uhat}{\widehat{\mathcal{U}}^\C_+}
\newcommand{\LL}{\mathbb{L}}
\renewcommand{\Re}{\operatorname{Re}}
\renewcommand{\Im}{\operatorname{Im}}
\newcommand{\phihat}{\hat \Phi}
\newcommand{\phiom}{\hat \Phi _\omega}
\newcommand{\xihat}{\hat \xi}
\newcommand{\fomega}{\hat F _\omega}
\newcommand{\xiom}{\hat \xi _\omega}
\newcommand{\fhat}{\hat F}
\newcommand{\bhat}{\hat B}
\newcommand{\bbar}{\begin{pmatrix}}
\newcommand{\ebar}{\end{pmatrix}}
\newcommand{\bdm}{\begin{displaymath}}
\newcommand{\edm}{\end{displaymath}}
\newcommand{\beq}{\begin{equation}}
\newcommand{\beqa}{\begin{eqnarray}}
\newcommand{\beqas}{\begin{eqnarray*}}
\newcommand{\eeq}{\end{equation}}
\newcommand{\eeqa}{\end{eqnarray}}
\newcommand{\eeqas}{\end{eqnarray*}}
\newcommand{\dd}{\textup{d}}
\newcommand{\EEE}{{\mathbb E}}
\newcommand{\bigcell}{\mathcal{B}_{1,1}}
\newcommand{\sym}{\mathcal{S}}
\newcommand{\Ad}{\textup{Ad}}
\newcommand{\enormal}{{\bf n}_E}
 \newcommand{\real}{{\mathbb R}}
\newcommand{\stilde}{\widetilde{\mathcal{S}}}
\newcommand{\uu}{\mathcal{U}} 
   \newtheorem{theorem}{Theorem}[section]
   \newtheorem{proposition}[theorem]{Proposition}
   \newtheorem{lemma}[theorem]{Lemma}
   \newtheorem{definition}[theorem]{Definition}
 \theoremstyle{remark}
\numberwithin{equation}{section}
\begin{document}

\title[Singularities of  CMC surfaces in $L^3$]{Singularities of Spacelike Constant Mean Curvature Surfaces in Lorentz-Minkowski Space}

\begin{abstract}
We study singularities of spacelike,  constant (non-zero) mean curvature (CMC) surfaces in the Lorentz-Minkowski $3$-space $L^3$.  We show how to solve the singular Bj\"orling
problem for such surfaces, which is stated as follows: given a real analytic null-curve
$f_0(x)$,  and a real analytic null vector field $v(x)$ parallel to the tangent
field of $f_0$, find a conformally parameterized (generalized) CMC $H$ surface in $L^3$ which contains this  curve as a singular set and such that the partial
derivatives $f_x$ and $f_y$ are given by $\frac{\dd f_0}{\dd x}$ and $v$ along the curve.
Within the class of generalized surfaces considered, the solution is unique
and we give a formula for the generalized Weierstrass data for this surface.
This gives a framework for studying the singularities of non-maximal CMC surfaces
in $L^3$. We use this to find the Bj\"orling data -- and holomorphic potentials -- which characterize
cuspidal edge, swallowtail and cuspidal cross cap singularities.
\end{abstract}

\author{David Brander}
\address{Department of Mathematics\\ Matematiktorvet, Building 303 S\\
Technical University of Denmark\\
DK-2800 Kgs. Lyngby\\ Denmark}
\email{D.Brander@mat.dtu.dk}

\keywords{Differential geometry, integrable systems, Bj\"orling problem, prescribed mean curvature}

\subjclass[2000]{Primary 53A10; Secondary 53C42, 53A35}

%\date{\today}

\maketitle

%\tableofcontents
\section{Introduction}
 Spacelike constant
mean curvature (CMC) surfaces in $(2+1)$-dimensional space-time $\LL^3$
were studied in \cite{brs} and \cite{inoguchi} using a generalized Weierstrass representation whereby the
surface is represented by a holomorphic map into a loop group. This is an application of the method of Dorfmeister, Pedit and Wu (DPW) \cite{DorPW} for harmonic maps into symmetric
spaces. In the non-compact case, the Iwasawa decomposition of the loop group, used to
construct the solutions, is only valid on an open dense set, the \emph{big cell}.
  It was shown in \cite{brs}
that singularities of the CMC surface 
arise as the boundary of the big cell is encountered. Here we will analyze these singularities and show how to construct CMC surfaces with
prescribed singular curves, and prescribed types of singularities, via a singular Bj\"orling formulation.

One of the obstructions to the effective use of
integrable systems methods for solving global problems in geometry has been
 the break-down of the loop group decompositions used to construct solutions.  
A motivating factor here is to understand and make use of the big cell boundary
behaviour.

\subsection{Singularities of maximal surfaces and fronts}
In the context of surfaces in Euclidean 3-space $\EEE^3$, a \emph{frontal} is a 
differentiable map $f: M^2 \to \EEE^3$, from a surface $M$,
which has a well defined normal direction, that is, a map
$\enormal: M^2 \to \SSS^2 \subset \EEE^3$ which is orthogonal to $f_*(TM^2)$.  If the map $(f,\enormal)$
is an immersion, then $f$ is called a \emph{(wave) front}.  
A singular point of any smooth map $f: M^2 \to \EEE^3$ is one where $f$ is not immersed,
and singular points $p_1$ and $p_2$ of $f_1: M_1^2 \to \EEE^3$ and 
$f_2: M_2^2 \to \EEE^3$ are called diffeomorphically 
equivalent if there exist local diffeomorphisms of the corresponding spaces
which commute with these maps.  
A theory of the singularities of fronts can be found in Arnold \cite{arnold1990}.
 Geometric concepts, such as curvature and completeness, for surfaces with singularities have been defined by Saji, Umehara and Yamada in \cite{suy}.
 
In this article we will encounter three standard singularities: 
the \emph{cuspidal edge},
given by $f(u,v) = (u^2, \, u^3, \, v)$, the \emph{swallowtail} given by 
$(3u^4 + u^2v, \, 4u^3+2uv, \, v)$ and the \emph{cuspidal cross cap} given by $(u, \, v^2, \, uv^3)$ (Figure \ref{figure1}).  The first two singularities are
fronts, but the third is only a frontal.

%______________________________________________________________________
%%%%%%%%%%%%%%%%%%%%%%%%%%%%%%%%%%%%%%%%%%%%%%%%%%%%%%%%%%%%%%%%%%%%%
\begin{figure}[here]  %figure 3
\begin{center}
\includegraphics[height=27mm]{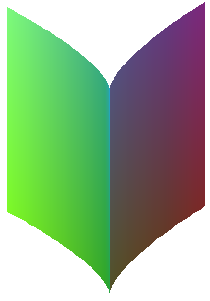} \hspace{10mm}
\includegraphics[height=27mm]{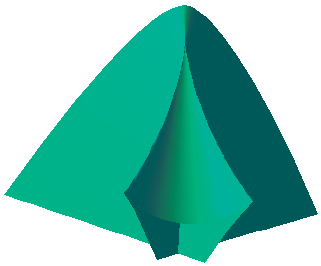} \hspace{10mm}
\includegraphics[height=27mm]{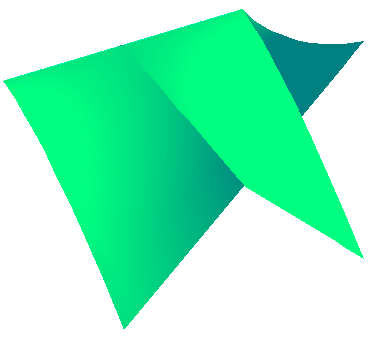}
\end{center}
\caption{Left to right: Cuspidal edge, swallowtail and cuspidal cross cap.}
\label{figure1}
\end{figure}
%%%%%%%%%%%%%%%%%%%%%%%%%%%%%%%%%%%%%%%%%%%%%%%%%%%%%%%%%%%%%%%%%%%%%
%______________________________________________________________________
 
  A  point to note is that if one wants a sensible theory of singularities, for example if one
 would like to classify singularities for a specific type of surface, then one needs to 
 consider \emph{generic} singularities, that is singularities which persist under 
 continuous deformations of the surface through the appropriate class.  
 If one considers the class of $\mathcal{C}^\infty$ maps of 2-manifolds into 3-manifolds,
 Whitney showed that generic singularities are cross caps \cite{whitney1944}.

Fronts and frontals arise naturally within the context of integrable systems -- 
very often it is exactly such  surfaces, rather than immersions, which
are produced via loop group constructions.  Conversely, for many geometric problems,
it is more or less unavoidable to consider surfaces with singularities: for example
it is well known that there is no complete immersion of the hyperbolic plane into
$\EEE^3$, and for the case of spacelike maximal (mean curvature zero) surfaces in $\LL^3$ 
the only complete immersion is the plane.  For these two examples, generic singularities
have been classified: for constant Gauss curvature surfaces in $\EEE^3$, Ishikawa
and Machida \cite{ishimach} showed that they consist of cuspidal edges and
swallowtails;  for maximal surfaces in $\LL^3$, Fujimori, Saji, Umehara and Yamada \cite{umyam2006, fsuy} showed that the generic singularities are all three of those shown
in Figure \ref{figure1}.

Recently there have been a number of interesting studies of maximal surfaces and their singularities: the reader is referred to articles such as \cite{aliaschavesmira2003, fls2005, fl2007, fls2007, kimyang2007, fsuy} and the references therein.
Most closely related to the present article are the classification of generic
singularities \cite{umyam2006, fsuy} already mentioned, and the work of
Y.W. Kim and S.D. Yang \cite{kimyang2007} on the singular Bj\"orling problem for maximal surfaces.

%________________________________-
\subsection{The Bj\"orling problem}
The classical Bj\"orling problem for minimal surfaces in $\EEE^3$ is to find the unique
minimal surface containing a given real analytic curve with prescribed tangent
planes along the curve (see \cite{dhkw}). The solution is obtained from the 
initial data by an analytic extension and an elementary formula in terms
of integrals. Since the solution is tied to the Weierstrass representation
of minimal surfaces in terms of holomorphic data, one has a similar
construction for regular maximal surfaces in $\LL^3$, given in \cite{aliaschavesmira2003}, which also have such a 
holomorphic representation. More generally, Kim and Yang \cite{kimyang2007} show that there
is also a solution when the initial curve is null (which implies that the
surface is not immersed there). Instead of prescribing the tangent plane 
along the curve, one seeks a surface which is conformally immersed except 
along the curve, with coordinates $z=x+iy$, and where the curve is given by $\{ y=0 \}$, and then prescribes the value of $f_y$, a null vector field
parallel to $f_x$. Note that null vectors are orthogonal if and only if they are
parallel, so this makes sense in terms of the conformal coordinates.
   One can then use this construction to study the
singularities of maximal surfaces.

As a generalization of the Weierstrass representations for minimal and
maximal surfaces, one has the DPW method for CMC $H \neq 0$ surfaces in
both $\EEE^3$ and $\LL^3$.  In \cite{bjorling}, it was shown that one
could use this method to solve the generalization of the Bj\"orling 
problem to non-minimal CMC surfaces in $\EEE^3$.  It is clear that 
essentially the same
construction works for \emph{regular} CMC $H \neq 0$ surfaces in $\LL^3$,
and we will show below that the \emph{singular} Bj\"orling problem can
also be solved for non-maximal CMC surfaces.  The main obstacle which
needs to be circumvented is that the DPW method depends on the
use of an $SU_{1,1}$ frame (extended to the loop group) and then a loop
group decomposition to go to the holomorphic data.  This $SU_{1,1}$ frame
is not defined along the singular curve, because the (Lorentzian) unit
 normal becomes lightlike and blows up.    Below, we will get around this by
 defining a special $SU_{1,1}$ ``frame", called the \emph{singular frame},
  along the curve, the definition of
 which is motivated by our analysis of the loop group construction.

\subsection{The DPW method}

The generalized Weierstrass representation for spacelike CMC surface in $\LL^3$ follows 
the same logic as that for CMC surface in Euclidean 3-space: in the maximal case, where
the mean curvature $H$ is zero, there is a Weierstrass representation in terms of a pair
of holomorphic functions, just as for minimal surfaces, related to the fact that the
Gauss map is holomorphic.  For the non-maximal case, the Gauss map is harmonic but not
holomorphic, and one can instead use the holomorphic representation for harmonic maps
given in \cite{DorPW}.  The only real difference from the Euclidean case is the non-compactness of the isometry
group, leading to an incomplete picture of what is actually constructed from the given
holomorphic data.  For more details and references, see \cite{brs}.

The DPW construction described in \cite{brs} is as follows:
A CMC $H$ immersion $f: \Sigma \to \LL^3$ from a Riemann surface into Minkowski 3-space
can be represented by a certain type of holomorphic map 
$\phihat: \Sigma \to \Lambda SL(2,\C)_\sigma$  into the twisted loop group of 
smooth maps from the unit circle into $SL(2,\C)$.  The map $\phihat$ is called a \emph{holomorphic
extended frame} for $f$.  
In connection with the Iwasawa decomposition with respect to the non-compact
real form
$\Lambda SU_{1,1}$, the loop group $\Lambda SL(2,\C)_\sigma$ can be written as a disjoint union
$\mathcal{B}_{1,1} \cup \mathcal{P}_1 \cup \mathcal{P}_2 \cup \mathcal{P}_3 \cup ...$. 
The set $\mathcal{B}_{1,1}$ is open and dense in $\Lambda SL(2,\C)_\sigma$, and
is called the (Iwasawa) big cell. 
As a converse to the above statement concerning $f$, given a holomorphic extended frame, 
if we restrict to $\Sigma^\circ := \phihat^{-1}(\mathcal{B}_{1,1})$, one obtains a CMC $H$ immersion into $\LL^3$.
 Behaviour of the surface as the largest two small cells, $\mathcal{P}_1$ and $\mathcal{P}_2$,
 are approached  was examined in \cite{brs}, and it was shown that the CMC surface 
 extends continuously to $\phihat^{-1}(\mathcal{P}_1)$, but is not immersed there, 
 and that the surface blows up as $\phihat^{-1}(\mathcal{P}_2)$ is approached.

 %_____________________________________
 \subsection{Results of this article}
  As we are interested
 in finite singularities, we define  a \emph{generalized CMC $H$ surface} to be a
 map $f$ obtained from a holomorphic extended frame $\phihat$, restricted to 
 $\Sigma_s := \phihat^{-1}(\mathcal{B}_{1,1} \cup \mathcal{P}_1)$. This includes all
 regular CMC $H$ surfaces, as one can always find a holomorphic extended frame for 
a regular surface which takes values in the big cell $\bigcell$.  We know that 
 the singular set $C := \phihat^{-1}(\mathcal{P}_1)$, where $f$ is not immersed,
  is locally given
 as the zero set of a non-constant real analytic function.
 We say that $z_0 \in C$ is \emph{weakly non-degenerate} if $\phihat$ maps some
 open curve containing $z_0$ into $\mathcal{P}_1$. This is simply the weakest 
condition needed to consider the singular Bj\"orling construction, and holds
for a generic point in $C$.

The main results of this article can be summarized as Theorem \ref{bjorling},
Theorem \ref{swallowtailthm} and Theorem \ref{crosscapsthm}.
The first of these results is the solution of the singular Bj\"orling problem
for CMC surfaces in $\LL^3$. It essentially 
says that given a real analytic curve $f_0: J \to \LL^3$, 
from some interval $J \subset \real \subset \C$, such that $\frac{\dd f_0}{\dd x}$ is a null
vector field, and given a real analytic vector field $v: J \to \LL^3$ which is 
proportional to $\frac{\dd f_0}{\dd x}$, then, for any constant $H>0$, there is a unique,
weakly non-degenerate, generalized
CMC $H$ surface $f$  satisfying 
$f \big|_J = f_0$ and $\frac{\partial f}{\partial y} \big|_J = v$.  It also gives a formula for the holomorphic potential
for the surface in terms of analytic extensions of the data specified along $J$.

The other two results mentioned, Theorems \ref{swallowtailthm} and
\ref{crosscapsthm}, give the conditions on the Bj\"orling data for the singularity
at a point $z_0 \in J$ to be diffeomorphic to a cuspidal edge, swallowtail or 
cuspidal cross cap. The conditions are simple: for the given Bj\"orling
data, one can always write $\frac{\dd f_0}{\dd x} = s \,[ \,\cos \theta, 
\sin \theta, 1 \,]$ and $v(x) = t \, [ \, \cos \theta, \sin \theta, 1 \,]$,
where $s$, $t$, and $\theta$ are $\real$-valued, and we assume that
$s$ and $t$ do not vanish simultaneously to avoid branch points.
 Then $s(0) \neq 0 \neq t(0)$
corresponds to a cuspidal edge at the coordinate origin; $s(0) = 0$ and $s^\prime(0) \neq 0$
corresponds to a swallowtail;  $t(0) = 0$ and $t^\prime(0) \neq 0$ is a
cuspidal cross cap (see Figure \ref{figure2}).

%______________________________________________________________________
%%%%%%%%%%%%%%%%%%%%%%%%%%%%%%%%%%%%%%%%%%%%%%%%%%%%%%%%%%%%%%%%%%%%%
\begin{figure}[here]  %figure 3
\begin{center}
\includegraphics[height=45mm]{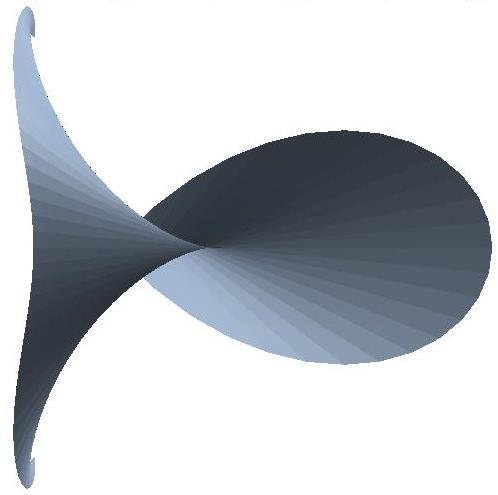} \hspace{10mm}
\includegraphics[height=45mm]{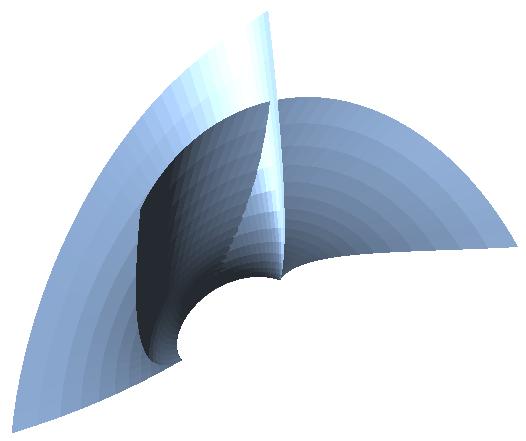}
\end{center}
\caption{Left: a CMC swallowtail singularity, computed numerically from the Bj\"orling data $s(x) = x$, $t(x)=1$, $\theta(x)=0.0001 x$. Right: a CMC cuspidal cross cap, computed from the data $s(x) = 1-x$, $t(x)=x$, $\theta(x)=0.001 x$. The images have been rescaled in the  direction $e_2+e_3$.}
\label{figure2}
\end{figure}
%%%%%%%%%%%%%%%%%%%%%%%%%%%%%%%%%%%%%%%%%%%%%%%%%%%%%%%%%%%%%%%%%%%%%
%______________________________________________________________________
%______________
\subsection{Open questions}  \label{openquestions}
It appears plausible that the three types of singularities 
just mentioned are the  generic singularities for CMC surfaces in $\LL^3$,
just as was shown for maximal surfaces in \cite{fsuy}. To prove this using the constructions here, one would first need to show
that generic singularities do not occur on higher small cells $\mathcal{P}_j$, for
$j>2$.  This seems likely, because the codimensions 
 of the small cells $\mathcal{P}_j$ in the loop group increase (pairwise) as $j$ 
 increases. Regardless of genericity, knowledge of the behaviour of the surface close
  to such points would also be interesting to have.
 
 \subsection{Alternative approaches: the Kenmotsu formula representation}
 An alternative to the DPW method is the Kenmotsu formula \cite{kenmotsu} for CMC surfaces in $\EEE^3$, adapted to spacelike CMC surfaces in $\LL^3$
by Akutagawa and Nishikawa  in \cite{akutagawanishikawa}.  This is also a
generalization of the Weierstrass representation for minimal/maximal surfaces,
as a formula in terms of the harmonic Gauss map.  In contrast to the DPW method,
one is still left with the problem of constructing the harmonic map.  The
Kenmotsu-Akutagawa-Nishikawa approach has been used by Y. Umeda \cite{umeda}
to study CMC surfaces with singularities in $\LL^3$, giving the conditions on
the harmonic Gauss map corresponding to cuspidal edges, swallowtails and cuspidal cross caps,
as well as some examples.  It is stated as an open problem whether or not a CMC
cuspidal cross cap exists: here we give a positive answer to this question, and,
in principal, construct all such singularities from their Bj\"orling data.

\section{Background material} \label{dpwsummarysection}
This section is a short summary of results in \cite{brs}. We use mostly  the same
notation and definitions here.  \textbf{Notational convention:} If $\hat X$
is some object with values in the loop group, with loop parameter $\lambda$, then
dropping the hat means the object is evaluated at $\lambda =1$:
\bdm
X := \hat X \Big|_{\lambda =1}.
\edm

\subsection{The loop group formulation for CMC surfaces in $\LL^3$} 
We use the basis 
\bdm
 e_1 := \bbar 0 & 1 \\ 1 & 0 \ebar, \hspace{.5cm} e_2 :=
 \bbar 0 & i \\ -i & 0 \ebar ,  \hspace{.5cm}
 e_3:= \bbar i & 0 \\ 0 & -i \ebar,
\edm
for the Lie algebra $\mathfrak{su}_{1,1}$.
With respect to the Killing metric, $\langle X,Y \rangle = \tfrac{1}{2} \text{trace} (XY)$, these vectors are orthogonal and normalized as
follows:
\bdm
\langle e_1,e_1 \rangle = \langle e_2,e_2 \rangle = - \langle e_3,  e_3 \rangle = 1,
\edm
 so we identify $\mathfrak{su}_{1,1}$ with 
the Lorentz-Minkowski space $\LL^3 = \real^{2,1}$, and also
use the notation $[a,b,c]^T= a e_1 + b e_2 + c e_3$ for a point in $\LL^3$.

Let $G$ be the subgroup of $SL(2,\C)$ consisting of elements of either
$SU_{1,1}$ or of $ie_1 \cdot SU_{1,1}$,
\beq
G =  \left\{ \bbar a & b \\ \varepsilon \bar b & \varepsilon \bar a \ebar ~ \Big|~ a,~b \in \C, \,\,\,  \varepsilon (a \bar a -  b  \bar b) = 1,
 \,\,\, \varepsilon = \pm 1 \right\}. \label{gmatrix}
\eeq
The Lie algebra of $G$ is $\mathfrak{g} = \mathfrak{su}_{1,1}$.

The twisted loop group $\uu := \Lambda G_\sigma$ consists of maps, $x: \SSS^1 \to G$, from the 
unit circle into $G$, such the diagonal and off-diagonal elements of the
matrix are even and odd functions of the $\SSS^1$ parameter $\lambda$.
All loops are of a suitable smoothness class so that the loop groups are Banach Lie groups.
An element of $\uu$ can again be written as in (\ref{gmatrix}), where
now $a$ and $b$ are respectively even and odd functions of $\lambda$. 
We will generally be considering loops which
extend holomorphically to an annulus around $\SSS^1$, and for these the holomorphic extensions
of
$\bar a$ and $\bar b$   respectively  have Fourier expansions $a^*(\lambda):= \overline{(a(1/\bar{\lambda}))}$ and
$b^*(\lambda):= \overline{(b(1/\bar{\lambda}))}$.
We can write 
\bdm
\uu := \Lambda G_\sigma = \uu_{1} \cup \uu_{-1},
\edm
where the $\varepsilon$ in  $\uu_{\varepsilon}$ corresponds to that in
(\ref{gmatrix}). We also have $\uu_{1} = \Lambda SU_{1,1}$ and
$\uu_{-1} = \tiny{\bbar 0 & \lambda i \\ \lambda^{-1} i & 0 \ebar} \cdot \uu_{1}$.
The Lie algebra, $Lie(\uu) = Lie(\uu_1)$, of $\uu$, consists of loops of matrices with analogous properties
to those in $\uu$, replacing the determinant $1$ condition with the trace zero condition.

The complexification of $\uu$ is $\uu^\C := \Lambda SL(2,\C)_\sigma$,
the group of loops in $SL(2,\C)$ which again have the twisted condition 
on diagonal/off-diagonal elements mentioned above.
Let ${\mathbb D}_{\pm}:= \{ \lambda \in \C \cup \{ \infty \} ~ \big| ~ |\lambda|^{\pm 1} < 1 \}$.
Three subgroups of $\uu^\C$ that we also use are: 
\beqas
\begin{aligned}
& \uu^\C_\pm :=   \{ \bhat \in \uu^\C ~ \big| ~ 
       \bhat \text{ extends holomorphically to } {\mathbb D}_\pm \}, \\
& \uhat := \{ \bhat \in \uu^\C_+ ~\big|~ \bhat \big|_{\lambda=0} = \tiny{\bbar \rho & 0 \\ 0 & \rho^{-1} \ebar}, ~\rho \in \real, ~\rho>0 \}. 
\end{aligned}
\eeqas

%**********************************************

Let $\Sigma$ be a simply connected non-compact
Riemann surface, and suppose $f: \Sigma \to \LL^3$ is a conformal 
spacelike immersion with constant mean
curvature $H \neq 0$, or an $H$-surface.  Without loss of generality,
we assume that $H>0$, the sign being a matter of orientation.
 If $z= x+iy$ is a local coordinate,
there is a function
$u: \Sigma \to \real$ such that the metric is given  by
$\dd s^2 = 4e^{2u}(\dd x^2 + \dd y^2)$.
The \emph{coordinate frame}  $F: \Sigma \to SU_{1,1}$ is well defined up to premultiplication by
$\pm I$, by
\beq \label{framedefn}
F e_1 F^{-1} = \frac{f_x}{|f_x|}, \hspace{1cm} 
F e_2 F^{-1} = \frac{f_y}{|f_y|}.
\eeq
Choose the conformal coordinates $x$ and $y$  such that the oriented unit normal
is then given by
$N = F e_3 F^{-1}$.  
The Hopf differential is defined to be $Q \dd z^2$, where
$Q:= \langle N,f_{zz} \rangle = -\langle N_z,f_z \rangle.$ 
The Maurer-Cartan form, $\alpha$, for the frame $F$ is defined to be
$\alpha := F^{-1} \dd F = U \dd z + V \dd \bar{z}$, where 
the connection coefficients $U := F^{-1}F_z$ and 
$V := F^{-1}F_{\bar{z}}$ are given by 
\begin{equation}\label{UhatandVhat}
U = \frac{1}{2} \begin{pmatrix} u_z & -2 i H e^u \\ 
                                   i e^{-u} Q & -u_z \end{pmatrix} 
, \hspace{1cm} V = \frac{1}{2} \begin{pmatrix} -u_{\bar z} & -i e^{-u} \bar Q \\ 
                            2 i H e^u & u_{\bar z} \end{pmatrix} . \end{equation}
The compatibility condition $\dd \alpha + \alpha \wedge \alpha = 0$
is equivalent to the pair of equations
\beq
u_{z \bar z} - H^2 e^{2u}+\tfrac{1}{4}  |Q|^2 e^{-2u} = 0,
\hspace{1cm}
 Q_{\bar z} = 2 e^{2 u} H_z.  \label{compatibility1} 
\eeq
The above structure for $U$ and $V$ are verified by a computation, using
 $H = \frac{1}{8}e^{-2u}\langle f_{xx} + f_{yy}, N \rangle$, 
and 
 \begin{equation}
f_z = 2 e^u F \cdot \bbar  0 & 1 \\ 0 & 0 \ebar
\cdot F^{-1} \; , \hspace{1cm}
f_{\bar z} = 2 e^u F \cdot \bbar  0 & 0 \\ 1 & 0 \ebar
\cdot F^{-1} \; . \end{equation}

We can insert  an $\SSS^1$ parameter $\lambda$ into the $1$-form $\alpha$, defining 
a family
$\hat \alpha := \hat U \dd z + \hat V \dd \bar{z}$, where
\begin{equation}\label{withlambda}
\hat U = \frac{1}{2} \begin{pmatrix} u_z & -2 i H e^u \lambda^{-1} \\ 
                         i e^{-u} Q \lambda^{-1} & -u_z \end{pmatrix} 
, \hspace{.5cm} \hat V = 
\frac{1}{2} \begin{pmatrix} -u_{\bar z} & -i e^{-u} \bar Q \lambda \\ 
                     2 i H e^u \lambda & u_{\bar z} \end{pmatrix} 
\; . \end{equation}
Then the assumption that $H$ is constant is equivalent to the integrability of
$\hat \alpha$ for all $\lambda$. 
 Hence it can be integrated to 
obtain a map $\hat F: \Sigma \to \uu_1$.
Supposing that our coordinate frame $F$ defined above satisfies $F(z_0) = F_0$, at some
point $z_0$, we integrate $\hat \alpha$ with the same initial condition, and
call the map $\hat F: \Sigma \to \uu_1$ thus obtained 
an \emph{extended frame}
for the $H$-surface $f$.

The Sym-Bobenko formula is the map $\sym: \uu \to Lie(\uu)$ given by:
\beq \label{symformula}
\sym(\hat F) :=   -\frac{1}{2H} \left( \hat F e_3 \hat F^{-1} + 2 i \lambda 
\frac{\partial\hat F}{\partial \lambda} \, \hat F^{-1} \right). 
\eeq
%*******************
We write $\sym_\lambda: \mathcal{U} \to \LL^3$ for the map given by evaluating
this at $\lambda \in \SSS^1$.
If  $\hat F: \Sigma \to \uu_1$ is an extended frame for an $H$-surface $f$, then, up a translation
in $\LL^3$, the surface is retrieved by applying the Sym-Bobenko formula at $\lambda =1$:
\bdm
f = \sym_1 (\hat F)  + \textup{translation}.
\edm
This is verified by computing $\sym_1 (\hat F) _z$ and $\sym_1 (\hat F)_{\bar z}$, using the matrices $\hat U$ and
$\hat V$.  The same computation shows that $\sym_{\lambda_0} (\hat F)$ is also an 
$H$-surface for any
$\lambda_0 \in \SSS^1$.  
For such computations, note that if 
\bdm
\hat G^{-1} \hat G_z = \bbar u_0 & \alpha \lambda^{-1} \\ \beta \lambda^{-1} & -u_0 \ebar, \hspace{1cm}
\hat G^{-1} \hat G_{\bar z} = \bbar  -\bar u_0 & \bar \beta \lambda \\ \bar \alpha \lambda & \bar u_0 \ebar,
\edm
and we set $f^\lambda =\sym_{\lambda}(\hat G)$, then one computes the following formulae:
\beq\label{derivatives}
\hat G^{-1} f^\lambda_z \, \hat G = \frac{2i}{H} \bbar 0 & \alpha \lambda^{-1} \\ 0 & 0 \ebar,
  \hspace{1cm}
\hat G^{-1} f^\lambda_{\bar z} \, \hat G = \frac{2i}{H} \bbar 0 & 0 \\ - \bar \alpha \lambda & 0 \ebar.
\eeq

One can also define a CMC surface with 
extended coordinate frame $\widetilde F$ in the other half of the loop group, $\uu_{-1}$,
by integrating the 1-form $\hat U \dd z + \hat V \dd \bar z$
with the initial condition 
\bdm
\widetilde F (z_0) = W = \bbar 0 & i \lambda \\ i \lambda^{-1} & 0 \ebar.
\edm
 Since $\sym(W \fhat) = \Ad_W \sym (\fhat) + \textup{translation}$ -- where $\Ad_X$ denotes conjugation by $X$ -- and $\Ad_W$ is an isometry of $\LL^3$, this is also a 
CMC surface.  If $\fhat$ is the frame obtained with the
initial condition $\fhat(z_0) = I$, then the relation between the surfaces obtained
at $\lambda =1$ is $\sym _1 (\widetilde F) = \Ad_W \big|_{\lambda=1} \sym_1(\fhat) + \textup{translation}$. The 
coordinate frame
for $\tilde f = \sym_1(\widetilde F)$ satisfies
$\widetilde F e_1
 \widetilde F \big|_{\lambda=1} = \frac{\tilde f_x}{|\tilde f_x|}$ and 
$\widetilde F e_2 \widetilde F \big|_{\lambda=1} = \frac{\tilde f_y}{|\tilde f_y|}$.

More generally, one can show (see, for example, the analogous argument in \cite{bjorling}):
\begin{lemma}  \label{lemma1}
 If $\hat F: \Sigma \to \uu = \uu_1 \cup \uu_{-1}$ is a real analytic 
map the Maurer-Cartan form of which has the form
\beq \label{requiredmcform}
\hat F^{-1} \dd \hat F = \alpha_{-1} \,  \dd z \,  \lambda^{-1} + \hat \beta \dd z + \hat \gamma \dd \bar z,
\eeq
where the loop-algebra valued functions $\hat \beta$ and $\hat \gamma$ extend holomorphically in $\lambda$
to the unit disc, and
with the regularity condition $[\alpha_{-1}]_{12} \neq 0$, then the map ${f}^{\lambda_0} =  
\sym_{\lambda_0}(\hat F)$ is an $H$-surface in $\LL^3$, and the coordinate
frame for this surface is given by $F =  \hat F|_{\lambda_0} \, D$, where $D: \Sigma \to G$ 
is a diagonal matrix-valued function.
\end{lemma}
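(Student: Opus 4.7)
The plan is to follow the same approach as the analogous lemma in \cite{bjorling}: verify directly from (\ref{derivatives}) and the Sym-Bobenko formula that $f^{\lambda_0}$ is a CMC $H$ immersion, then construct the diagonal gauge $D$ that sends $\hat F|_{\lambda_0}$ to the coordinate frame. The first step is to pin down the $\lambda$-dependence of the Maurer-Cartan form. The twisting condition for $\uu$ forces $\alpha_{-1}$ to be strictly off-diagonal and the $\lambda^0$-coefficient of $\hat \beta$ to be diagonal; the reality condition (MC form in $Lie(\uu)$) then forces $\hat \beta$ to be independent of $\lambda$, and $\hat \gamma$ to have the form $\gamma_0 + \gamma_1 \lambda$, with $\gamma_0, \gamma_1$ determined by $\hat \beta, \alpha_{-1}$ via the involution defining the real form $\mathfrak{su}_{1,1}$. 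The resulting MC form matches precisely the hypothesis of (\ref{derivatives}), with the coefficient $\alpha := [\alpha_{-1}]_{12}$ nonzero by assumption.

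Given this, (\ref{derivatives}) exhibits $f^{\lambda_0}_z$ as a conjugate, by $\hat F|_{\lambda_0}$, of a nilpotent matrix proportional to $\alpha \lambda_0^{-1}$. Conjugation invariance of the Killing form then gives $\langle f^{\lambda_0}_z, f^{\lambda_0}_z\rangle = 0$, while a parallel computation yields $\langle f^{\lambda_0}_z, f^{\lambda_0}_{\bar z}\rangle = 2|\alpha|^2/H^2 > 0$; hence $z$ is conformal for a spacelike immersion with conformal factor $e^{2u} = |\alpha|^2/H^2$. The unit normal is $N = \hat F e_3 \hat F^{-1}|_{\lambda_0}$ --- the first term of (\ref{symformula}) up to a scalar --- and is unaffected by right multiplication of $\hat F$ by any element commuting with $e_3$. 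Differentiating $f^{\lambda_0}_z$ once more in $\bar z$, using the form of $\hat F^{-1} \hat F_{\bar z}$ found in step one, and taking the Killing pairing with $N$, one obtains $\langle f^{\lambda_0}_{z\bar z}, N\rangle = 2|\alpha|^2/H$; the mean curvature $\tfrac{1}{8}e^{-2u}\langle f_{xx}+f_{yy}, N\rangle$ therefore equals $H$.

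It remains to identify the coordinate frame. Writing $\mu := \alpha \lambda_0^{-1} = |\mu|e^{i\psi}$ and expanding $f^{\lambda_0}_x, f^{\lambda_0}_y$ in terms of $f^{\lambda_0}_z, f^{\lambda_0}_{\bar z}$, one finds that both tangent vectors are $\hat F|_{\lambda_0}$-conjugates of explicit real linear combinations of $e_1, e_2$ whose coefficients depend only on $\psi$. Since conjugation by $\operatorname{diag}(e^{i\theta}, e^{-i\theta}) \in G$ rotates the $(e_1,e_2)$-plane by $2\theta$, there is a real function $\theta$ on $\Sigma$, unique modulo $\pi$, such that $D := \operatorname{diag}(e^{i\theta}, e^{-i\theta})$ makes $F := \hat F|_{\lambda_0} D$ satisfy both conditions of (\ref{framedefn}); conformality ensures that fixing the $e_1$-condition forces the $e_2$-condition. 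The main obstacle is the reality-plus-twisting analysis in step one: once this is in place, the construction of $D$ and the verification of the mean curvature reduce to direct computations with (\ref{derivatives}) and (\ref{symformula}).
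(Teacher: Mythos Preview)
Your proposal is correct and follows exactly the approach the paper intends: it defers to ``the analogous argument in \cite{bjorling}'', and the remark immediately following the lemma records precisely the reality-plus-twisting reduction of the Maurer-Cartan form that constitutes your step one. Once that reduction is made, your use of (\ref{derivatives}) and (\ref{symformula}) to verify conformality, compute the mean curvature, and build the diagonal gauge $D$ (compare your $\theta=\psi/2+\pi/4$ with the paper's later formula (\ref{coordframe})) is the standard computation.
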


Note that the Sym-Bobenko formula is invariant under gauge transformations $\hat F \mapsto \hat F D$, where
$D$ is constant in $\lambda$ and diagonal.  It also follows from the fact that 
the 1-form $\hat F^{-1} \dd \hat F$ of Lemma \ref{lemma1} takes values in $Lie(\uu)$ that, in fact,
\bdm
\hat F^{-1} \dd \hat F = \alpha_{-1} \,  \dd z \,  \lambda^{-1} +  
\alpha_0 \, \dd z + \tau(\alpha_0) \, \dd \bar z + 
 \tau(\alpha_{-1}) \,  \dd \bar z \, \lambda,
\edm
where the involution $\tau$ that defines $\mathfrak{g} = \mathfrak{su}_{1,1}$ as a real form of $\mathfrak{sl}(2,\C)$ is given by:
\bdm
\tau (X) := - \Ad_\sigma \bar X ^t, \hspace{1cm} \sigma = \bbar 1 & 0 \\ 0 & -1 \ebar.
\edm

%------------------------------------------------------------------
\subsection{Construction of solutions via the DPW method}
By Lemma \ref{lemma1}, the problem of constructing a conformal spacelike
 CMC immersion $f: \Sigma \to \LL^3$
is evidently equivalent to the problem of constructing a real analytic
 map $\hat F: \Sigma \to \uu$, such that $\hat F ^{-1} \dd \hat F$ is of the type given 
 by (\ref{requiredmcform}).  The DPW construction does exactly that, beginning with an
 arbitrary \emph{holomorphic} map $\phihat: \Sigma \to \uu^\C$ which satisfies
 $\phihat^{-1} \dd \phihat = (\beta_{-1} \lambda^{-1} + \beta_0 + ...) \dd z$.

In order to explain this, we first need to state the Iwasawa decomposition of $\uu^\C$.
Define, for a positive integer $m \in {\mathbb Z}^+$,
\bdm
 \omega_{m} = \begin{pmatrix}
1 & 0 \\ \lambda^{-m} & 1
\end{pmatrix}, \;\; \text{$m$ odd}  ; \hspace{1cm}
\omega_{m} = \begin{pmatrix}
1 & \lambda^{1-m} \\ 0 & 1 
\end{pmatrix}, \;\; \text{$m$ even.} 
\edm

%------------------------
\begin{theorem} \label{iwasawathm} ($SU_{1,1}$ Iwasawa 
decomposition \cite{brs})
\begin{enumerate}
\item\label{mainthm-(1)}
The group $\uu^{\C}$ is a disjoint union
\beq \label{globaldecomp}
 \uu^{\C} =  
\bigcell  \sqcup \bigsqcup_{m \in {\mathbb Z}^+ } \mathcal{P}_m,
 \eeq
where
\bdm
\bigcell := \uu \cdot \uu^\C_+,
\edm
is called the \emph{big cell}, and
the \emph{$n$-th small cell} is:
\beq \label{pndecomp}
\mathcal{P}_n := \uu_1 \cdot \omega_n \cdot \uu^\C_+.
\eeq

\item \label{mainthm-(2)}
In the factorization
\beq \label{bciwasawa}
\phihat = \hat F \hat B,  \hspace{1cm} \hat F \in \uu, \hspace{.5cm} \hat B \in \uu^\C_+,
\eeq
of a loop $\phihat \in \bigcell$, 
the factor $\hat F$ is unique up to right
multiplication by an element of the subgroup $\uu^0$ of constant loops in $\uu$.
 Both factors are unique
if we require that $\hat B \in \uhat$,  and with this normalization the product map
$\uu \times \uhat \to \bigcell$ is a 
real analytic diffeomorphism.

\noindent
 \item\label{mainthm-(3)}
The Iwasawa big cell, $\bigcell$, is an open dense subset of
 $\uu^{\C}$. The complement of $\bigcell$ in $\uu^{\C}$ is locally
 given as the zero set of a non-constant real analytic function $\uu^{\C} \to \C$.
\end{enumerate}
\end{theorem}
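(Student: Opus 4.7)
The plan is to combine a Birkhoff-type decomposition for $\uu^\C$ with the finite-dimensional Iwasawa decomposition of $SL(2,\C)$ relative to the non-compact real form $SU_{1,1}$. Because $SU_{1,1}$ is non-compact, the pointwise product map $SU_{1,1} \times B \to SL(2,\C)$ (with $B$ a suitable Borel) is only a diffeomorphism onto an open dense subset, the complement being a single coset represented by the non-trivial Weyl element. Lifted to the loop group and combined with the affine Weyl orbit in the loop direction, this pointwise defect is what produces the infinite family of small cells $\mathcal{P}_n$.

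For the big cell statement in (\ref{mainthm-(1)}) and the uniqueness in (\ref{mainthm-(2)}), I would analyze the product map
\begin{displaymath}
\Phi : \uu \times \uhat \to \uu^\C, \qquad (\hat F, \hat B) \mapsto \hat F \hat B,
\end{displaymath}
and show it is an injective real analytic local diffeomorphism onto an open subset, which is then defined to be $\bigcell$. Injectivity reduces to the claim $\uu \cap \uhat = \{I\}$: an element $h$ in this intersection is holomorphic on ${\mathbb D}_+$, and the reality condition $\tau(h) = h$ on $\SSS^1$ extends $h$ analytically across $\SSS^1$ into ${\mathbb D}_-$, giving a holomorphic map $\C \cup \{\infty\} \to SL(2,\C)$, which is therefore constant in $\lambda$; the twisting $\sigma$ then forces $h$ to be diagonal, and the $\uhat$ normalization singles out $h = I$. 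That $\Phi$ is a local diffeomorphism follows from the topological direct sum decomposition $Lie(\uu^\C) = Lie(\uu) \oplus Lie(\uhat)$, checked mode-by-mode from the finite-dimensional real-form splitting of $\mathfrak{sl}(2,\C)$ under $\tau$. The uniqueness claim in (\ref{mainthm-(2)}) then follows by applying the same intersection computation to two putative factorizations of a given $\phihat \in \bigcell$.

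For the density and local analytic-subvariety description in (\ref{mainthm-(3)}), I would exhibit a non-constant real analytic function $\delta : \uu^\C \to \C$ whose non-vanishing characterizes $\bigcell$. Guided by the pointwise case, where the $SU_{1,1}$-Iwasawa fails exactly on the zero locus of an indefinite Hermitian quantity in the matrix entries, one lifts this obstruction to loops by coupling it with the Birkhoff-positive factor at $\lambda = 0$; concretely, $\delta$ can be built as a determinantal combination of finitely many Fourier modes of $\phihat$. Since $\delta(I) \neq 0$ and $\delta$ is real analytic, its zero set is thin and locally cut out by a non-constant analytic function, giving both density of $\bigcell$ and the complement description. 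The stratification of the complement into the $\mathcal{P}_n$ then comes from intersecting the classical Birkhoff decomposition $\uu^\C = \bigsqcup_\mu \uu^\C_- \cdot \mu \cdot \uu^\C_+$ with the left action of $\uu_1$: within each Birkhoff cell the $\uu_1$ orbit collapses the representatives to one of the explicit elements $\omega_n$, and the odd/even parity distinction in the definition of $\omega_m$ records whether the chosen representative lies in $\uu_1$ or in $\uu_{-1}$, as forced by the twisting $\sigma$.

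The main obstacle I anticipate is constructing $\delta$ explicitly and verifying that its vanishing locus coincides exactly with the complement of $\bigcell$. Simultaneously controlling the $\SSS^1$-pointwise Iwasawa obstruction and the holomorphic extension across ${\mathbb D}_+$ is where one needs Banach-analytic implicit function arguments rather than purely algebraic manipulations; the rest of the statement -- openness, density, the local-variety description, and the identification of the representatives $\omega_n$ -- then follows uniformly from the Birkhoff machinery together with the finite-dimensional $SU_{1,1}$ Iwasawa picture.
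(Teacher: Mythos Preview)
The paper does not prove this theorem: it is quoted verbatim from \cite{brs} as background material in Section~\ref{dpwsummarysection}, which is explicitly described as ``a short summary of results in \cite{brs}.'' There is therefore no in-paper proof to compare your proposal against.

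That said, your sketch is broadly the right shape for how such results are established in the literature, and in particular tracks the strategy of \cite{brs}: reduce uniqueness to the intersection $\uu \cap \uhat = \{I\}$ via a Liouville-type argument, get the local diffeomorphism from the Lie algebra splitting, and obtain the small-cell stratification by combining the Birkhoff decomposition with the pointwise $SU_{1,1}$ Iwasawa defect. The one place where your outline is genuinely soft is the construction of the function $\delta$ cutting out the big-cell complement: in \cite{brs} this is not a single global determinantal expression in finitely many Fourier modes, but rather comes from first performing a Birkhoff factorization and then examining the leading coefficients of the positive factor (compare Lemma~\ref{switchlemma}, which carries out exactly this analysis near $\mathcal{P}_1$). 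Your description ``a determinantal combination of finitely many Fourier modes of $\phihat$'' is too optimistic as stated; the correct object depends on the Birkhoff factor, which is itself only locally real analytic in $\phihat$.
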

%----------------------
It is clear from Theorem \ref{iwasawathm} that the big cell $\bigcell$ is naturally
divided into two disjoint open sets corresponding to whether the element $\hat F$ is a
loop in $SU_{1,1}$ or in $i e_1 SU_{1,1}$. We denote these subsets by $\bigcell^+$
and $\bigcell^-$ respectively. 

Now it is easy to check that if  $\phihat: \Sigma \to \bigcell \subset \uu^\C$ satisfies
 $\phihat^{-1} \dd \phihat = (\beta_{-1} \lambda^{-1} + \beta_0 + ...) \dd z$, and
 $\phihat = \hat F \hat B$ is an Iwasawa factorization of $\phihat$, with $\hat F \in \uu$,
 then $\hat F ^{-1} \dd \hat F$ is of the required form (\ref{requiredmcform}).
That is the essential point behind the 
generalized Weierstrass representation for
 $H$-surfaces which will be stated in the next theorem.
\begin{definition}
 A \emph{standard (holomorphic) potential} on a Riemann surface $\Sigma$
is a holomorphic 1-form $\xihat \in Lie(\uu^\C) \otimes \Omega^{1,0} (\Sigma)$, the Fourier expansion of which begins at $\lambda^{-1}$:
\bdm
\xihat = \sum_{i=-1}^\infty \beta_i \lambda^i \dd z,
 \hspace{1cm} \beta_i: \Sigma \to \mathfrak{sl}(2,\C), \textup{ holomorphic},
\edm
and with the regularity condition on the (1,2) component of $\beta_{-1}$:
\bdm
[\beta_{-1}]_{12}\,(z) \neq 0, \hspace{.5cm} \forall z \in \Sigma.
\edm
 \end{definition}
%%%%%%%%%%%%%%%%%%%%%%%%%%%%%%%%%%%%%%%%%%%%%%%%%%
%**********************************************
\begin{theorem} \label{dpwthm} \cite{brs}. \hspace{.2cm}
Let $\xihat$ be a standard holomorphic potential 
on a simply-connected Riemann surface  $\Sigma$.
 Let $\phihat :\Sigma \to \uu^\C$ be 
a solution of 
\bdm
\phihat^{-1} \dd \phihat= \xihat.
\edm  
Define the open set 
$\Sigma^\circ := \phihat ^{-1} (\bigcell)$.
Assume that the map $\phihat$,  maps at least one point into $\bigcell$,
so that $\Sigma^\circ$ is not empty,
 and take any  $G$-Iwasawa splitting pointwise on $\Sigma^\circ$: 
\beq \label{thmsplit}
\phihat = \hat F \hat B, \hspace{1.5cm} \hat F \in \uu, \hspace{.2cm} \hat B \in \uu^\C_+.
\eeq
  Then for any $\lambda_0 \in \SSS^1$, the map
$f^{\lambda_0} := \sym_{\lambda_0}(\hat F) \,: \, \Sigma^\circ \to \LL^3$, given by
the Sym-Bobenko formula \eqref{symformula}, is a conformal spacelike
CMC $H$ immersion,
and is independent of the choice of $\hat F \in \uu$ in (\ref{thmsplit}).

Conversely, let $\Sigma$ be a noncompact Riemann surface.  Then 
any non-maximal conformal CMC spacelike immersion 
from $\Sigma $ into $\LL^3$ can be constructed in this manner, 
using a holomorphic potential $\xihat$ that is well-defined on $\Sigma$.   
\end{theorem}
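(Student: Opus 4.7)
The plan is to attack the two halves separately, each built on Lemma \ref{lemma1} as the bridge between potential data and surface data. For the forward direction, given an Iwasawa factorization $\phihat = \hat F \hat B$ on $\Sigma^\circ$, I would compute
\[
\hat F^{-1} \dd \hat F = \hat B \, \xihat \, \hat B^{-1} - (\dd \hat B)\, \hat B^{-1}.
\]
Since $\hat B \in \uu^\C_+$ has only non-negative Fourier modes in $\lambda$, the subtrahend contributes only non-negative powers in both the $\dd z$ and $\dd \bar z$ slots, while conjugation by $\hat B$ preserves the lowest $\lambda$-order of $\xihat$. The $\lambda^{-1}$ coefficient equals $B_0 \beta_{-1} B_0^{-1} \dd z$, where the twisting condition forces $B_0 := \hat B|_{\lambda=0}$ to be diagonal; hence its $(1,2)$ entry is a nonzero multiple of $[\beta_{-1}]_{12}$ and the regularity condition persists. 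Thus $\hat F^{-1} \dd \hat F$ has the form \eqref{requiredmcform}, and Lemma \ref{lemma1} yields that $\sym_{\lambda_0}(\hat F)$ is a conformal spacelike CMC $H$ immersion.

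For independence of the choice of $\hat F$: by Theorem \ref{iwasawathm}(2), two Iwasawa factorizations of the same $\phihat$ differ by right multiplication by a constant loop $\hat k \in \uu^0$. The twisting condition forces any $\lambda$-constant element to have vanishing off-diagonal part, so $\hat k$ is diagonal and $\lambda$-independent; it then commutes with both $e_3$ and $\partial_\lambda$, and direct substitution into \eqref{symformula} yields $\sym(\hat F \hat k) = \sym(\hat F)$.

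For the converse, starting with an immersion $f$ and its extended frame $\hat F: \Sigma \to \uu_1$ obtained by integrating \eqref{withlambda}, I would produce $\phihat$ (and hence $\xihat$) by gauging: seek $\hat B: \Sigma \to \uu^\C_+$ such that $\phihat := \hat F \hat B$ is holomorphic. The condition $\phihat_{\bar z} = 0$ reduces to the linear $\bar\partial$-equation $\hat B_{\bar z} = -\hat V \hat B$, with $\hat V := \hat F^{-1} \hat F_{\bar z}$ having only non-negative powers of $\lambda$ by \eqref{withlambda}. On a non-compact simply connected Riemann surface this can be solved mode-by-mode in $\lambda$, producing $\hat B$ with only non-negative Fourier modes, i.e. $\hat B \in \uu^\C_+$. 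The resulting $\phihat$ then automatically lies in $\bigcell$ on all of $\Sigma$, since $\phihat = \hat F \hat B$ is itself an Iwasawa decomposition. The $1$-form $\xihat := \phihat^{-1} \dd \phihat$ is holomorphic by construction, and the same conjugation argument as in the forward direction shows that its $\lambda^{-1}$ component is nonzero with non-vanishing $(1,2)$ entry, precisely because $H \neq 0$ in \eqref{withlambda}.

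The main obstacle I anticipate is in the converse: establishing solvability of the $\bar\partial$-equation for $\hat B$ while keeping genuine control over the Fourier expansion in $\lambda$ inside the Banach Lie group setting. The compact-group DPW argument is not available verbatim because the relevant real form is non-compact $SU_{1,1}$ and the Iwasawa decomposition fails on the small cells; however, the converse only requires the positive factor $\hat B$ rather than any real-form factor, so one can invoke the standard non-compactness-of-$\Sigma$ argument from \cite{DorPW} as transplanted to the twisted loop group $\Lambda SL(2,\C)_\sigma$ in \cite{brs}. With that solvability in hand, verifying that $\xihat$ is a standard holomorphic potential reduces to inspection of the lowest $\lambda$-order, completing the proof.
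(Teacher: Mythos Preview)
The paper does not actually prove this theorem: it is quoted from \cite{brs}, and the only argument supplied here is the one-sentence remark preceding the statement, namely that if $\phihat^{-1}\dd\phihat$ has the right form and $\phihat=\hat F\hat B$ is an Iwasawa factorization, then $\hat F^{-1}\dd\hat F$ is of the form \eqref{requiredmcform}, so Lemma~\ref{lemma1} applies. Your forward direction expands exactly this remark, and your handling of the $\lambda^{-1}$ coefficient and of the independence of $\hat F$ (via the diagonality of $\uu^0$ forced by the twist) is correct.

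For the converse, the standard route in \cite{DorPW} and \cite{brs} is not to solve the $\bar\partial$-equation $\hat B_{\bar z}=-\hat V\hat B$ mode-by-mode, but to apply a Birkhoff decomposition to the extended frame $\hat F$ itself, writing $\hat F=\hat F_-\hat F_+$ with $\hat F_\pm\in\uu^\C_\pm$ and $\hat F_-|_{\lambda=\infty}=I$; holomorphicity of $\hat F_-$ in $z$ then follows by comparing $\lambda$-expansions, and $\xihat=\hat F_-^{-1}\dd\hat F_-$ is the desired potential. Your alternative is legitimate in principle, but the step ``solve mode-by-mode and the result lies in $\uu^\C_+$'' hides the real work: each Fourier coefficient of $\hat B$ satisfies an inhomogeneous $\bar\partial$-equation whose right-hand side depends on the previous coefficients, and one must show the resulting formal series converges in the Banach topology of the loop group. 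The Birkhoff approach avoids this by producing $\hat F_-$ in one stroke as a loop-group element, at the cost of invoking the (global) Birkhoff decomposition theorem. Either way, the genuine analytic content lives in \cite{brs}, and you are right to defer to it.
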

We call $\phihat$ a \emph{holomorphic extended frame} for the
family of surfaces $f^\lambda$.
It is also true that if we normalize the factors 
in (\ref{thmsplit}) so that $\hat B \in \uhat$,
and define the function $\rho: \Sigma^\circ \to \real$ by $\hat B|_{\lambda=0} = \textup{diag}(\rho, \rho^{-1})$, then there exist conformal coordinates 
$\tilde z= \tilde x+i \tilde y$ on $\Sigma$
such that the induced metric for  $f^1$ is given by
\bdm
\dd s^2 = 4 \rho^4 (\dd \tilde x^2 + \dd \tilde y^2),
\edm
and the Hopf differential is given by $Q \dd \tilde z^2$, where $Q = -2H\frac{b_{-1}}{a_{-1}}$.

%---------------------------

%******************************************************************
%##################################################################

%******************************************************************
%##################################################################

%*************************************************************
\subsection{Behaviour of  the surface at the boundary of the big cell}

Theorem \ref{dpwthm} says that  a standard holomorphic potential
$\xihat$ corresponds to an $H$-surface, 
 provided we restrict to $\Sigma^\circ = \phihat^{-1}(\bigcell)$.
Now set 
\bdm
\mathcal{C}:= \Sigma \setminus \Sigma^\circ = \bigcup_{j=1}^\infty \phihat^{-1} (\mathcal{P}_j),
\hspace{1cm}
\mathcal{C}_1 := \phihat^{-1} (\mathcal{P}_1),
\hspace{1cm}
\mathcal{C}_2 := \phihat^{-1} (\mathcal{P}_2).
\edm

%************
\begin{theorem} \cite{brs} \label{summarythm1}
Let $\phihat$ be as defined in Theorem \ref{dpwthm}.
 Then
\begin{enumerate}
\item  \label{summary0}
 $\Sigma^\circ$ is open and dense in $\Sigma$. 
More precisely, its complement, the set $\mathcal{C}$, 
is locally given as the zero set 
 of a non-constant real analytic function $\Sigma \to \C$.
\item  \label{summary1}
The sets $\Sigma^\circ \cup \mathcal{C}_1$ and $\Sigma^\circ \cup \mathcal{C}_2$ 
are both {open} subsets of $\Sigma$.
The sets $\mathcal{C}_1$ and $\mathcal{C}_2$ are each locally given as the zero set 
 of a non-constant real analytic function $\Sigma  \to \real$.   
\item \label{summary2}
All components of any matrix $F$ obtained by Theorem \ref{dpwthm} on $\Sigma^\circ$,
and evaluated at $\lambda_0 \in \SSS^1$, blow up as $z$ approaches a point $z_0$ in
either $\mathcal{C}_1$ or $\mathcal{C}_2$. In the limit, the unit normal vector $N$, to the corresponding
surface, becomes asymptotically lightlike, i.e. its length in the Euclidean space
$\real^3$ metric approaches infinity.
\item \label{summary3}
The surface $f^{\lambda_0}$ obtained from Theorem \ref{dpwthm}
 extends to a real analytic map 
$\Sigma^\circ \cup \mathcal{C}_1 \to \LL^3$, but 
is not immersed at points $z_0  \in \mathcal{C}_1$.
\item \label{summary4}
The surface $f^{\lambda_0}$ diverges to  $\infty$ as
$z \to  z_0 \in \mathcal{C}_2$. Moreover, the induced metric on the surface blows
up as such a point in the coordinate domain is approached.
\end{enumerate}

\end{theorem}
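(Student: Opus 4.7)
The plan is to reduce parts (1) and (2) to Theorem \ref{iwasawathm} together with the holomorphicity of $\phihat$, and then to prove (3)--(5) by a direct asymptotic analysis of the Iwasawa factorization as one approaches a small cell $\mathcal{P}_1$ or $\mathcal{P}_2$.

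For (1), Theorem \ref{iwasawathm} already gives that $\uu^\C \setminus \bigcell$ is locally the zero set of a non-constant real analytic function $g : \uu^\C \to \C$. Since $\phihat$ is holomorphic, $g \circ \phihat$ is real analytic on $\Sigma$, with zero set $\mathcal{C}$; non-constancy on a neighbourhood of any point follows from the standing assumption that $\phihat$ meets $\bigcell$ somewhere. For (2), the key is the stratification: the closures of the small cells satisfy $\overline{\mathcal{P}_n} \subset \bigcup_{m \geq n} \mathcal{P}_m$, so $\Sigma^\circ \cup \mathcal{C}_1$ is the complement of the closed set $\bigcup_{j \geq 2} \phihat^{-1}(\mathcal{P}_j)$, and similarly for $\Sigma^\circ \cup \mathcal{C}_2$. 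The statement that $\mathcal{C}_1$ and $\mathcal{C}_2$ are locally zero sets of \emph{real-valued} real analytic functions requires a finer local normal form near $\omega_1$ and $\omega_2$: within the open sets $\Sigma^\circ \cup \mathcal{C}_1$ and $\Sigma^\circ \cup \mathcal{C}_2$, the codimension-one condition (over $\real$) of membership in $\mathcal{P}_m$ reduces to the vanishing of a single real analytic real-valued function, essentially the appropriate principal minor of a Hermitian pairing associated to the low-order Fourier coefficients of $\phihat$.

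For (3), the central computation is as follows. Near a point $z_0 \in \mathcal{C}_m$ with $m \in \{1,2\}$, one writes locally
\bdm
\phihat(z) = \hat F_0(z) \,\omega_m \,\hat B_0(z) \cdot \big(I + \hat \epsilon(z)\big),
\edm
with $\hat F_0 \in \uu_1$, $\hat B_0 \in \uu^\C_+$, and $\hat \epsilon(z_0) = 0$. For $z \notin \mathcal{C}_m$, the Iwasawa factorization $\phihat = \hat F \hat B$ must reproduce the $\lambda^{\pm m}$ singularities contributed by $\omega_m$; but $\hat B \in \uu^\C_+$ is regular at $\lambda = 0$, so the negative-power Fourier coefficients of $\hat F$ must diverge at a rate governed by the vanishing order of the relevant entry of $\hat \epsilon$. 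Evaluating at any $\lambda_0 \in \SSS^1$ then shows every matrix entry of $F = \hat F|_{\lambda = \lambda_0}$ diverges. For the unit normal statement, $N = F e_3 F^{-1}$ has constant Lorentzian length, so once $F$ leaves bounded regions of the non-compact group the Euclidean norm $|a|^2 + |b|^2$ (from \eqref{gmatrix}) must go to infinity, forcing $N$ to become asymptotically lightlike in the Euclidean metric.

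Parts (4) and (5) then follow by applying the Sym--Bobenko formula \eqref{symformula} to the expansion above. For $m=1$, the key observation is that $\omega_1 e_3 \omega_1^{-1}$ and $\lambda \partial_\lambda \omega_1 \cdot \omega_1^{-1}$, together with the correction terms arising from $\hat\epsilon$, combine into quantities which remain bounded and real analytic in $z$, so that $\sym_{\lambda_0}(\hat F)$ extends real analytically across $\mathcal{C}_1$; the non-immersion statement then follows from \eqref{derivatives}, since $f^{\lambda_0}_z$ is controlled by $[\alpha_{-1}]_{12}$, which vanishes exactly where the factorization degenerates to $\omega_1$-type. For $m=2$, the analogous analysis of $\omega_2 e_3 \omega_2^{-1}$ exhibits an uncancelled singular contribution to $\sym_{\lambda_0}$, producing divergence; moreover the $\uhat$-normalization of $\hat B$ forces the diagonal scale $\rho$ to blow up, yielding the metric divergence. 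The principal obstacle is part (4): because $\hat F$ itself diverges on $\mathcal{C}_1$, one must exhibit precise cancellations within both the conjugation term $\hat F e_3 \hat F^{-1}$ and the derivative term $\lambda \partial_\lambda \hat F \cdot \hat F^{-1}$, and these cancellations cannot be read off from the Iwasawa decomposition abstractly but require the explicit local model given above.
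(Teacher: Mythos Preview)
The paper does not actually prove this theorem; it is quoted from \cite{brs}. What the paper \emph{does} do is recall the key technical ingredient, Lemma \ref{switchlemma}, and then in Section 3 spell out the mechanism behind item (\ref{summary3}): one passes to the shifted holomorphic frame $\phiom := \phihat \,\omega_1^{-1}$, which lands in $\bigcell$ near a $\mathcal{C}_1$-point, and uses the identity $\stilde(\phihat\,\omega_1^{-1}) = \stilde(\phihat)$ (equation \eqref{equivsym}) to extend the surface. The explicit formulae \eqref{explicitfact}--\eqref{explicituandv} give the precise blow-up rate of the $\uu$-factor as $|(\mu+\rho)\rho|\to 1$, which is what drives items (\ref{summary2})--(\ref{summary3}).

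Your plan is in the right spirit for (1)--(3), but item (\ref{summary3}) has a real gap. You argue that non-immersion on $\mathcal{C}_1$ follows because ``$f^{\lambda_0}_z$ is controlled by $[\alpha_{-1}]_{12}$, which vanishes exactly where the factorization degenerates to $\omega_1$-type.'' This is not how it works: the frame $\hat F$ in the Iwasawa splitting $\phihat = \hat F \hat B$ \emph{diverges} at $\mathcal{C}_1$ (that is precisely item (\ref{summary2})), so its Maurer--Cartan form is not defined there, and there is no meaningful $[\alpha_{-1}]_{12}$ to evaluate at $z_0$. The cancellation you allude to in your final paragraph is exactly what the $\omega_1^{-1}$-shift accomplishes: $\phiom$ has a \emph{regular} Iwasawa factorization $\phiom = \fomega \hat B_\omega$ near $z_0$, and one computes $f_z$, $f_{\bar z}$ in terms of $\fomega$ (as in \eqref{singframeeqns}) to see they become parallel null vectors along $\mathcal{C}_1$. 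Without this device (or the equivalent explicit factorization of Lemma \ref{switchlemma}), your ``precise cancellations'' remain a hope rather than an argument. The same comment applies to (\ref{summary4}): the divergence for $m=2$ is established in \cite{brs} via an explicit $\omega_2$-analogue of Lemma \ref{switchlemma}, not by an abstract ``uncancelled singular contribution.''
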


%%%%%%%%%%%%%%%%%%%%%%%%%%%%%%%%%%%%%%%%%%%%%%%%%%%%%%%%%%%%%%%%%%%%%%%%%%%%

The arguments given in \cite{brs} to prove those parts of the above theorem
involving $\mathcal{C}_1$ and $\mathcal{C}_2$ all depend on an explicit Iwasawa factorization of
an element of the form $B \omega_1$, where $B$ is an arbitrary
element of  $\uu^\C_+$.  We will use this explicit factorization again
several times below, and so we recall it here:

%________________________________________________________
\begin{lemma} \cite{brs} \label{switchlemma}
Let $\bhat = \begin{pmatrix} a & b \\ c & d \end{pmatrix} = 
\bbar  \rho & 0 \\ 0 & \rho^{-1} \ebar + \bbar  0 & \mu \\ \nu & 0 \ebar \lambda + o(\lambda^2)$ 
be any element of $\uu^\C_+$. Then there 
exists a factorization
\beq \label{switchfact}
\bhat \omega_1 = \hat X \bhat^\prime,
\eeq
where $\bhat^\prime \in \uu^\C_+$ and $\hat X$ is of one of 
the following three
forms: 
\bdm
k_1 = \begin{pmatrix} u & v \lambda \\ \bar{v} \lambda^{-1} & \bar{u} 
\end{pmatrix}, \hspace{.5cm}
k_2 =\begin{pmatrix} u & v \lambda \\ -\bar{v} \lambda^{-1} & -\bar{u} 
\end{pmatrix}, \hspace{.5cm}
\omega_1^\theta = \begin{pmatrix} 1 & 0 \\ e^{i\theta} \lambda^{-1} & 1 
\end{pmatrix}, 
\edm
where $u$ and $v$ are constant in $\lambda$ and can be chosen so 
that the matrix
has determinant one, and $\theta \in \real$. 
The matrices $k_1$ and $k_2$ are in $\uu$, 
and their components  satisfy the equation 
\beq \label{uveqn}
\frac{|u|}{|v|} = |\mu + \rho||\rho| \; . \\
\eeq
The first two forms occurs when $\bhat \omega_1$ is in the big cell $\bigcell$, and the third form occurs if and only if
$\bhat \omega_1$ is in the first small cell, $\mathcal{P}_1$.
The three cases  correspond 
to the cases $|(\mu+ \rho)\rho|$
greater than, less than or equal to 1, respectively.
Moreover, if $\bhat \omega_1$ is given locally by  a
 real analytic map either from $\real^2 \to \bigcell$, or 
 from $\real \to \mathcal{P}_1$,
then the factors $\hat X$ and $\bhat^\prime$ can be chosen to be real analytic.
\end{lemma}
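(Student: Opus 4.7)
The plan is to attempt, in turn, each of the three candidate forms for $\hat X$ and read off conditions on $\mu$, $\rho$ under which the remaining factor $\bhat' := \hat X^{-1}\bhat \omega_1$ actually lies in $\uu^\C_+$. Writing $\bhat\omega_1 = \begin{pmatrix} a+b\lambda^{-1} & b\\ c+d\lambda^{-1} & d \end{pmatrix}$, the only potentially negative Fourier coefficients in each entry are at $\lambda^{-1}$ and $\lambda^{-2}$ (using that $b$ is odd in $\lambda$, so $b\lambda^{-1}$ is holomorphic at $\lambda=0$ with constant term $\mu$). Thus the task in every case is to kill the $\lambda^{-1}$ and $\lambda^{-2}$ terms of $\hat X^{-1}\bhat\omega_1$ by a suitable choice of the free parameters in $\hat X$.

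For the case $\hat X = k_1$, I compute $k_1^{-1}\bhat\omega_1$ and observe that the twisted/odd-even structure kills all negative powers except in the $(2,1)$ entry, whose $\lambda^{-1}$-coefficient evaluates (using $a|_{\lambda=0}=\rho$, $d|_{\lambda=0}=\rho^{-1}$, $b = \mu\lambda + o(\lambda^3)$) to $u\rho^{-1}-\bar v(\rho+\mu)$. Setting this to zero and combining with $|u|^2-|v|^2=1$ yields $|v|^2(|\rho(\rho+\mu)|^2 -1)=1$, which is solvable (with $v\neq 0$) precisely when $|(\mu+\rho)\rho|>1$, and in that case gives $|u|/|v|=|(\mu+\rho)\rho|$, establishing \eqref{uveqn}. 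The same computation for $\hat X=k_2$, where the determinant condition is instead $|v|^2-|u|^2=1$, produces the corresponding equation $|v|^2(1-|\rho(\rho+\mu)|^2)=1$, which is solvable exactly when $|(\mu+\rho)\rho|<1$. For $\hat X = \omega_1^\theta$, the only negative power in $(\omega_1^\theta)^{-1}\bhat\omega_1$ is $\lambda^{-1}$ in the $(2,1)$ entry, whose coefficient is $\rho^{-1}-e^{i\theta}(\rho+\mu)$; solvability for real $\theta$ forces $|(\mu+\rho)\rho|=1$, and then $\theta$ is determined up to the usual ambiguity.

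Once the correct $\hat X$ is found, one must verify that $\bhat' = \hat X^{-1}\bhat\omega_1$ really lies in $\uu^\C_+$, not just that its initial Fourier tail cancels: since $\hat X$ lies in $\uu$ (hence in $Lie(\uu)$-valued one-form framework) and $\bhat\omega_1$ differs from an element of $\uu^\C_+$ only by the controlled $\lambda^{-1}$-piece we just neutralized, $\bhat'$ extends holomorphically to $\mathbb D_+$; the twisting condition is preserved because $\omega_1$, $k_1$, $k_2$, and $\omega_1^\theta$ all respect $\sigma$. The identification of which of the three forms corresponds to the big cell vs.\ $\mathcal{P}_1$ follows from Theorem \ref{iwasawathm}: $k_1,k_2 \in \uu$ so $\bhat\omega_1\in\bigcell$ in those cases, while $\omega_1^\theta$ is a conjugate of $\omega_1$ by the $\lambda$-independent diagonal matrix $\operatorname{diag}(e^{i\theta/2},e^{-i\theta/2})$, hence lies in $\mathcal{P}_1$. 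Since the three regimes $|(\mu+\rho)\rho|>1$, $<1$, $=1$ exhaust all possibilities, the trichotomy is complete.

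The main thing to be careful about is the last claim, the real analyticity of the factors when $\bhat\omega_1$ depends analytically on real parameters. In the big cell cases, the free parameters $u,v$ are determined by $u = \bar v\rho(\rho+\mu)$ (or its $k_2$-analogue) together with $|u|^2\mp|v|^2=\pm 1$ and a normalization of the phase of $v$ (the remaining ambiguity corresponds to the action of the constant diagonal torus in $\uu^0$, which is precisely the unique non-uniqueness in Theorem \ref{iwasawathm}(\ref{mainthm-(2)})); fixing any local analytic section of this phase gives analytic $u,v$ in $\rho,\mu$. In the small-cell case, $e^{i\theta}=1/(\rho(\rho+\mu))$ is itself analytic in the data on the codimension-one real-analytic locus $|(\mu+\rho)\rho|=1$, so the claim follows there as well. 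The one genuine obstacle is the boundary between the three cases, where $|v|\to\infty$ or the two big-cell forms would have to be glued: the lemma only asserts analyticity on each of the three open/locally-closed strata separately, and this is exactly what the above formulas deliver.
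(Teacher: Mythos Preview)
Your proof is correct and follows essentially the same approach as the paper: both derive the key relation $u/\bar v = \varepsilon(\mu+\rho)\rho$ by requiring the $\lambda^{-1}$ coefficient of the $(2,1)$ entry of $\hat X^{-1}\bhat\omega_1$ to vanish, then read off the trichotomy from the determinant condition $|u|^2-|v|^2=\varepsilon$, and handle the boundary case $|(\mu+\rho)\rho|=1$ via the explicit $\omega_1^\theta$. The paper merely presents the explicit factorization formulas for $\hat X$ and $\bhat'$ upfront and verifies them (with the specific analytic choice $v = (\varepsilon(|\mu+\rho|^2|\rho|^2-1))^{-1/2}$), whereas you compute backwards to the same equations; your parenthetical about a ``$Lie(\uu)$-valued one-form framework'' is confused and should simply be deleted, but it plays no role in the argument.
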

%--------------
\begin{proof}

One can write down  explicit expressions as follows:
 for the cases 
 $|(\mu+ \rho)\rho|^{\varepsilon} >1$, where $\varepsilon = \pm 1$,
  the factorization is given by
\beq \label{explicitfact}
\begin{split}
&\hat X = 
\begin{pmatrix} u & v \lambda \\ \varepsilon \bar{v} \lambda^{-1} & \varepsilon \bar{u} 
\end{pmatrix} ,  \\
& \bhat^\prime = 
\begin{pmatrix} \varepsilon \bar{u}b \lambda^{-1} - dv + \varepsilon \bar{u}a - vc \lambda ~&~
   b \varepsilon \bar{u} - v d \lambda \\
   -\varepsilon \bar{v} b \lambda^{-2} + (-\varepsilon \bar{v} a + u d)\lambda^{-1} + u c ~&~
     - b \varepsilon \bar{v}\lambda^{-1} + u d \end{pmatrix}. 
\end{split}
\eeq
One can choose $u$ and $v$ so that $\varepsilon (u \bar{u} - v \bar{v}) = 1$ and 
such that $\bhat^\prime \in \uu^\C_+$, the latter condition being 
assured by the requirement that 
$\frac{u}{\bar{v}} = \varepsilon (\mu + \rho)\rho$. Once such choice is
\beq  \label{explicituandv}
v = \frac{1}{\sqrt{ \varepsilon \left( \left| \mu + \rho \right|^2 \left|\rho \right|^2 -1\right)}}, 
\hspace{1cm} u = \varepsilon(\mu+ \rho)\rho \, \bar v.
\eeq
It is straightforward to verify that $\hat X \bhat^\prime =  \bhat \omega_1^{-1}$.

For the case $|(\mu+ \rho)\rho| =1$, use
\beq \label{explicitfact2}
\begin{split}
&\hat X = 
\begin{pmatrix} u & v \lambda \\ - \bar{v} \lambda^{-1} &  \bar{u} 
\end{pmatrix} ,  \\
&\bhat^\prime = 
\begin{pmatrix}  \bar{u}b \lambda^{-1} - dv +  \bar{u}a - vc \lambda ~&~
   b  \bar{u} - v d \lambda \\
    \bar{v} b \lambda^{-2} + (\bar{v} a + u d)\lambda^{-1} + u c ~&~
      b  \bar{v}\lambda^{-1} + u d \end{pmatrix}. 
\end{split}
\eeq
 and choose
$\frac{u}{\bar{v}} =  -(\mu+ \rho) \rho$.
One can choose $u= \frac{1}{\sqrt{2}}$ and $\bar v = \frac{-1}{\sqrt{2}}((\mu + \rho) \rho)^{-1} =
\frac{-1}{\sqrt{2}}e^{i \theta} $ and 
\bdm
\begin{pmatrix} u & v \lambda \\ -\bar{v} \lambda^{-1} & \bar{u} 
\end{pmatrix}
= \begin{pmatrix} 1 & 0 \\ e^{i \theta} \lambda^{-1} & 1 
\end{pmatrix} 
   \begin{pmatrix} \frac{1}{\sqrt{2}} & -\frac{1}{\sqrt{2}} 
e^{- i \theta} \lambda \\ 0 & \sqrt{2} \end{pmatrix}.
\edm
Pushing the  last factor into $\bhat^\prime$ 
then gives the required factorization.  
In this case, $\bhat \omega_1^{-1}$ is in $\mathcal{P}_1$, 
because it can be expressed as 
\bdm
\small\bbar e^{-i\theta/2}& 0 \\ 0 & e^{i\theta/2} \ebar \cdot
  \omega_1  \cdot
\bbar e^{i\theta/2} & 0 \\ 0 & e^{-i\theta/2} \ebar \bhat^\prime.
\edm 
The claimed analytic properties of the factors are satisfied for the explicit 
choices of $u$ and $v$ given above, because the 
expression $(\mu+ \rho) \rho$
is real analytic.
\end{proof}
%___________________________________

%%%%%%%%%%%%%%%%%%%%%%%%%%%%%%%%%%%%%%5
%%%%%%%%%%%%%%%%%%%%%%%%%%%%%%%%%%%%%%%%%%%%%%%%%%%%%%%%%%%%%%
%%%%%%%%%%%%%%%%%%%%%%%%%%%%%%%%%%%%%%%%%%%%%%%%%%%%%%%%
\section{The Weierstrass representation for surfaces with singularities}
Theorem \ref{summarythm1} states that singularities occur at points which are
mapped into $\mathcal{P}_1$, and that the frame $F$ is not defined at such 
points.  In this section we define an alternative extended frame $\fomega$
which does not blow up at singular points.  This will be used in the next
section to solve the singular Bj\"orling problem.

Let $\pi: \bigcell \to  \uu/\uu^0$ denote the projection
 defined by taking the equivalence class of $\hat F$ (under right multiplication 
by elements of $\uu^0$) in the Iwasawa factorization $\phihat = \hat F \hat B$ 
of $\phihat \in \bigcell$.  Since the Sym-Bobenko formula $\sym$ is invariant
under right multiplication by constant diagonal matrices, $\sym: \uu/\uu^0 \to Lie(\uu)$
is well defined, and we can extended it to a map 
\bdm
\stilde: \bigcell \to Lie(\uu), \hspace{1cm} \stilde = \sym \circ \pi.
\edm
Again we define the map $\stilde_\lambda: \bigcell \to \LL^3$ by
evaluating this at $\lambda \in \SSS^1$.
The crucial fact that is exploited here and 
 in \cite{brs} -- and is proved using Lemma \ref{switchlemma} -- is
  that if $\phihat \in \bigcell$
and $\phihat \omega_1^{-1} \in \bigcell$ then 
\beq \label{equivsym}
\stilde \left(\phihat \, \omega_1^{-1}\right) = \stilde \left(\phihat \right).
\eeq
Thus, if $\phihat: \Sigma \to \uu^\C$, and $\phihat(z_0) = \omega_1 \in \mathcal{P}_1$,
then we can just as well consider the map $\phiom := \phihat \omega_1^{-1}$.
Then $\phiom(z) \in \bigcell$ in a neighbourhood of $z_0$,
and if $\phihat$ is a holomorphic extended frame,
then so is $\phiom$ -- for the same family of surfaces $f^\lambda$.
On the open dense set $\phihat^{-1}(\bigcell) \, \cap \, \phiom ^{-1}(\bigcell)$, we have $\stilde (\phihat) = \stilde (\phiom)$,
and so it is valid to define 
\bdm
f^{\lambda_0}(z_0) :=  \stilde_{\lambda_0} (\phiom (z_0)). 
\edm
Any element of $\mathcal{P}_1$ is of the form $\hat F_0 \omega_1 \hat B_0$,
and essentially the same argument can be used to define $f^{\lambda_0}(z_0)$
when $\phihat(z_0)$ has this form.
Hence one can define a real analytic map
 $f^{\lambda_0}: \phihat^{-1}(\bigcell \cup \mathcal{P}_1) \to \LL^3$ 
 which is an immersed CMC $H$ surface on $\phihat^{-1}(\bigcell)$.

\begin{definition}
Let $\Sigma$ be a simply-connected Riemann surface,
$\xihat$ a standard potential, and $\phihat: \Sigma \to \uu^\C$ the map
obtained by integrating $\phihat^{-1} \dd \phihat = \xihat$ with an initial 
condition $\phihat (z_0) = \phihat_0 \in \uu^\C$.  Assume that $\phihat(w) \in \bigcell$ 
for at least one point $w \in \Sigma$.  Let $\Sigma_s \subset \Sigma$ be the open dense subset
  given by $\Sigma_s = \phihat^{-1} (\bigcell \cup \mathcal{P}_1)$,
and define, for any $\lambda \in \SSS^1$,
\bdm
f^\lambda: \Sigma_s \to \LL^3, \hspace{1cm}
f^\lambda(z) =  \stilde_\lambda \left(\phihat(z)\right).
\edm
We call the map $f^\lambda$ -- and, more generally, any map from a Riemann surface
into $\LL^3$ which has such a representation locally -- a \emph{generalized constant mean curvature $H$  surface}, or \emph{generalized $H$-surface}, in $\LL^3$.
\end{definition}

%%%%%%%%%%%%%%%%%%%%%%%%%
\subsection{Singular holomorphic potentials and frames} \label{singularholsect}

For a typical generalized  $H$-surface we can expect, from Theorem \ref{summarythm1} Item
\ref{summary1}, that the singular set $\mathcal{C}_1 = \phihat^{-1}(\mathcal{P}_1)$ is a curve,
and we can deduce  from Item \ref{summary2} that this curve must be a null curve, wherever
it is regular.

It is clear from the preceding discussion that one may construct a generalized
 $H$-surface with a singularity
at $z_0$ by integrating a standard potential $\xihat$ with the initial condition
$\phihat (z_0) = \omega_1$, \emph{provided} that the resulting complex extended frame $\phihat$ 
does satisfy $\phihat(z) \in \bigcell$ for some $z$.   Alternatively, supposing
we did this, there is also the translated map $\phiom = \phihat \,  \omega_1^{-1}$ --
which may be more natural because $\phiom(z_0) = I$ and so this maps a neighbourhood
of $z_0$ into the big cell. 

 We first analyze the Maurer-Cartan form of $\phiom$,
given that $\xihat$ is a standard potential, which has the general form:
\beq  \label{phihatform}
\phihat^{-1} \dd \phihat = \left\{ \bbar 0 & a_{-1}  \\ b_{-1} & 0 \ebar \lambda^{-1} + 
    \bbar c_0 & 0 \\ 0 & -c_0 \ebar +
      \bbar 0 & a_1 \\ b_1 & 0 \ebar \lambda  + o(\lambda^2) \right\} \dd z,
\eeq
where $a_{-1}$ is non-vanishing.
For $\phiom = \phihat \,  \omega_1^{-1}$, the above expression is equivalent to
\beqas  
\phiom^{-1} \dd \phiom &=& \left\{ \bbar 0 & 0 \\ -a_{-1} & 0 \ebar \lambda^{-3}
     + \bbar -a_{-1} & 0 \\ 0 & a_{-1} \ebar \lambda^{-2} 
        + \bbar 0 & a_{-1} \\ b_{-1} + 2c_0 - a_1 & 0 \ebar \lambda^{-1} \right.  \\
  &&      
      \left. + \bbar c_0 - a_1 & 0 \\ 0 & -c_0 + a_1 \ebar + 
        \bbar 0 & a_1 \\ b_1 & 0 \ebar \lambda + o(\lambda^2) \right\} \, \dd z. 
\eeqas

Now consider the special case that $\phiom(z) \in \uu$ for $z \in \real$.  Then the Iwasawa factorization of
$\phiom$ along $\real$, is just $\phiom = \phiom \cdot I$, and therefore the Iwasawa
factorization of $\phihat$ for $z \in \real$ is just $\phihat = \phihat_\omega \cdot \,  \omega_1 \, \cdot I$.  In other words, such a holomorphic
frame maps the real line into $\mathcal{P}_1$.  The assumption is equivalent
to demanding 
that $\phiom^{-1} \, \frac{\partial \phiom}{\partial x}(x,0) \, \dd x $ has coefficients in $Lie(\uu)$, 
which implies that it must be of the form:
\beqa   \label{singularreal}
\xihat_0 &=&   \left\{ \bbar 0 & 0 \\ -a & 0 \ebar \lambda^{-3} + 
   \bbar -a & 0 \\ 0 & a \ebar \lambda^{-2} + \bbar 0 & a \\ b & 0 \ebar \lambda^{-1} 
   + \bbar i r & 0 \\ 0 & -i r \ebar \right.\\   
  &&  \left.  + \bbar 0 &  \bar b \\ \bar a & 0 \ebar \lambda
     + \bbar \bar a & 0 \\ 0 & -\bar a \ebar \lambda^2  + 
       \bbar 0 & -\bar a \\ 0 & 0 \ebar \lambda^3 \right\} \dd x, \nonumber  
\eeqa
where $a$ and $b$ are  maps  $\real \to \C$ while $r: \real \to \real$,
and all functions are restrictions to $\real$ of holomorphic functions.
Hence, the Maurer-Cartan form of $\phiom$ is a holomorphic
extension of this:
%%%%%%*******************

\begin{definition} \label{singularholdef}
Let   $\Sigma \subset \C$ be a simply connected open subset which
intersects the real line in an interval: $\Sigma \cap \real = J = (x_0, x_1)$,
and contains the origin $z=0$.
A \emph{standard singular holomorphic potential} on $\Sigma$, is a holomorphic 1-form $\xiom$
on $\Sigma$ that can be expressed as:
\beqa   \label{singularhol}
\xihat_\omega = \phiom^{-1} \dd \phiom &=&   \left\{ \bbar 0 & 0 \\ -a & 0 \ebar \lambda^{-3} + 
   \bbar -a & 0 \\ 0 & a \ebar \lambda^{-2} + \bbar 0 & a \\ b & 0 \ebar \lambda^{-1} 
   + \bbar i r & 0 \\ 0 & -i r \ebar \right.\\   
  &&  \left.  + \bbar 0 &  \tilde b \\ \tilde a & 0 \ebar \lambda
     + \bbar \tilde a & 0 \\ 0 & -\tilde a \ebar \lambda^2  + 
       \bbar 0 & -\tilde a \\ 0 & 0 \ebar \lambda^3 \right\} \dd z, \nonumber  
\eeqa
where $a$, $b$ and $r$ are  holomorphic on  $\Sigma$, the restriction of $r$ to $J$
is real, that is $\overline{r(\bar{z})} = r(z)$, and
$\tilde a$ and $\tilde b$ are holomorphic extensions of the restrictions $\bar a \big|_\real$ and
$\bar b \big|_\real$, that is $\tilde a(z)= \overline{a(\bar z)}$, and $\tilde b(z)= \overline{b(\bar z)}$, with the regularity condition:
\begin{itemize}
\item[(A)] \label{conditiona}
$a(z)$ non-vanishing on $\Sigma$.
\end{itemize}\end{definition}

Define the \emph{singular holomorphic frame} $\phiom$ corresponding to $\xihat_\omega$
to be the map $\phiom: \Sigma \to \uu^\C$ obtained by solving the equation

\bdm
\phiom^{-1} \dd \phiom = \xihat_\omega, \hspace{1cm} \phiom(0) = I.
\edm
Set  
\beqas
\phihat \, &:=&  \, \phiom \, \omega_1,\\ 
\Sigma^\circ := \phihat^{-1}(\bigcell), \hspace{.5cm} &&
C := \phihat^{-1}(\mathcal{P}_1), \hspace{1cm}
\Sigma_s := \Sigma^\circ \cup C.
\eeqas
Note that 
$\phihat(0) = \omega_1 \notin \bigcell$ so it is not clear
that  $\Sigma^\circ$ is non-empty.

%****************
\begin{theorem}  \label{thm1}
Suppose $\xihat_\omega$ is a standard singular holomorphic potential given by Definition
\ref{singularholdef}, and suppose that $\Sigma^\circ$ is non-empty.
Then 
\begin{enumerate}
\item \label{thm1item1}
 $\Sigma^\circ$ is
open and dense in $\Sigma$.
\item  \label{thm1item2}
$\Sigma_s$ is also an open dense subset of $\Sigma$. For any $\lambda \in \SSS^1$,
the map $f^\lambda: \Sigma_s \to \LL^3$, given by 
\beqas
f^\lambda &=& \stilde_\lambda \left(\, \phiom \, \right) \\
&=& \stilde_\lambda \left(\, \phihat \, \right),
\eeqas
is a generalized constant mean curvature $H$ surface.  
\item \label{thm1item3}
The restriction $f^\lambda \, \big|_{\Sigma^\circ}: \Sigma^\circ \to \LL^3$ is a
spacelike CMC $H$ immersion.
\item  \label{thm1item4}
The map $f^\lambda$ is not immersed at points $z \in C$, and 
the interval $J = \ \Sigma \cap \real$ is contained in the singular set $C$.
Moreover,  $f^\lambda \big |_J$ is 
either a single point or a real analytic
null curve which is regular except at points where $\Re (a \lambda^{-2}) = 0$.

\item \label{thm1item5}
A condition that ensures that $\Sigma^\circ$ is non-empty is:
\begin{itemize}
\item[(B)] \label{conditionb}
 $r - \Im b$   not equivalent to zero on $J = \Sigma \cap \real$.
 \end{itemize}
 Moreover, on a neighbourhood in $\Sigma$ of a point $z_0 \in J$, such that
 $r(z_0)-\Im b(z_0) \neq 0$, the sets $C$ and  $J$ coincide.
\end{enumerate}
\end{theorem}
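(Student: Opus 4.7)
The plan is to reduce the first three items to Theorems \ref{iwasawathm}, \ref{dpwthm}, and \ref{summarythm1} by showing that $\phihat := \phiom \, \omega_1$ is a holomorphic extended frame in the DPW sense. Since $\omega_1$ is constant in $z$, $\xihat := \phihat^{-1} \dd \phihat = \omega_1^{-1} \xihat_\omega \omega_1$, and a direct term-by-term conjugation of each Fourier coefficient of $\xihat_\omega$ by $\omega_1$ (which introduces $\lambda^{-1}$ and $\lambda^{-2}$ shifts in the lower row) will show that the $\lambda^{-3}$ and $\lambda^{-2}$ contributions cancel in three-fold sums, so that $\xihat$ is a standard holomorphic potential with off-diagonal $\lambda^{-1}$-coefficient equal to $a$, which is nonvanishing by condition (A). Items (\ref{thm1item1}) and (\ref{thm1item3}) then follow at once from Theorem \ref{iwasawathm}(\ref{mainthm-(3)}) and Theorem \ref{dpwthm} applied to $\phihat$, while Item (\ref{thm1item2}) follows from Theorem \ref{summarythm1}(\ref{summary1}) combined with the identity $\stilde_\lambda(\phihat) = \stilde_\lambda(\phiom)$ already justified in the discussion preceding Definition \ref{singularholdef}, which provides the real-analytic extension of $f^\lambda$ across $C$.

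For Item (\ref{thm1item4}), I first observe that the symmetry of $\xihat_\omega$ built into Definition \ref{singularholdef} makes $\xihat_\omega(\partial_x)$ $\tau$-invariant along $\real$, so $\phiom$ takes values in $\uu$ along $J$, whence $\phihat(x) = \phiom(x) \, \omega_1 \in \uu \cdot \omega_1 \subset \mathcal{P}_1$ and $J \subset C$. Since $\phiom \in \uu$ on $J$, the map $f^\lambda|_J$ coincides with $\sym_\lambda(\phiom)$, whose tangent I compute directly: setting $\nu := \phiom^{-1} \partial_x \phiom$ and using $\partial_\lambda \partial_x = \partial_x \partial_\lambda$,
\bdm
\partial_x \sym_\lambda(\phiom) = -\frac{1}{2H} \, \phiom \bigl([\nu, e_3] + 2i \lambda \partial_\lambda \nu\bigr) \phiom^{-1}.
\edm
Expanding $\nu = \xihat_\omega|_J(\partial_x)$ Fourier mode by Fourier mode, I expect the bracketed expression to collapse, for $\lambda \in \SSS^1$, to $8i \lambda^{-1} \Re(a\lambda^{-2}) \bbar \lambda & -\lambda^2 \\ 1 & -\lambda \ebar$, a rank-one traceless matrix --- hence a null element of $\mathfrak{su}_{1,1}$ --- scaled by the real-analytic function $\Re(a\lambda_0^{-2})$ of $x$. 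Conjugation by $\phiom \in \uu$ preserves rank and tracelessness, so $f^\lambda|_J$ is either a single point (if the scalar vanishes identically) or a null curve, regular away from zeros of $\Re(a\lambda_0^{-2})$.

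The main obstacle is Item (\ref{thm1item5}). The strategy is to factor $\phihat = \hat F_\omega \hat B_\omega \omega_1$ using the Iwasawa factorization of $\phiom$ near $J$ and apply Lemma \ref{switchlemma} to $\hat B_\omega \omega_1$; the trichotomy there is governed by the real-analytic function
\bdm
\Psi(x, y) := |(\mu(x,y) + \rho(x,y))\rho(x,y)|^2 - 1,
\edm
where $\rho, \mu$ are the $\lambda^0$ and $\lambda$ coefficients of $\hat B_\omega$, and it suffices to show $\Psi \not\equiv 0$ near $J$. On $J$ one has $\hat B_\omega = I$ (so $\Psi = 0$); writing $\hat B_\omega = I + y Y + O(y^2)$, the holomorphicity $\partial_y \phiom = i \partial_x \phiom$ together with the subgroup memberships $\hat F_\omega \in \uu$ and $\hat B_\omega \in \uu^\C_+$ identifies $Y$ as the $Lie(\uhat)$-component of $i \xihat_\omega(\partial_x)|_J$ in the Iwasawa Lie-algebra splitting $Lie(\uu^\C) = Lie(\uu) \oplus Lie(\uhat)$. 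Carrying out this splitting Fourier mode by Fourier mode --- using the pairing $A_k = -\sigma \bar A_{-k}^T \sigma$ satisfied by the singular potential --- I anticipate $\partial_y \rho|_{y=0} = -r$ and $\partial_y \mu|_{y=0} = 2i\bar b$, yielding $\partial_y \Psi|_{y=0} = 2\Re(2\partial_y \rho + \partial_y \mu) = -4(r - \Im b)$. Under (B) this does not vanish identically on $J$, so by real analyticity there is an open dense $J' \subset J$ on which $r - \Im b \neq 0$; near any point of $J'$, $\Psi \neq 0$ for small $y \neq 0$, giving $\Sigma^\circ \neq \emptyset$, and moreover $C$ coincides with $J$ in such a neighbourhood, proving the remaining claim of Item (\ref{thm1item5}).
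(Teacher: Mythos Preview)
Your proposal is correct and follows essentially the same route as the paper's proof: reduce Items (\ref{thm1item1})--(\ref{thm1item3}) to Theorems \ref{dpwthm} and \ref{summarythm1} by checking that $\phihat^{-1}\dd\phihat = \omega_1^{-1}\xihat_\omega\,\omega_1$ is a standard potential; differentiate the Sym--Bobenko formula along $J$ (where $\phiom = \hat F_\omega$) for Item (\ref{thm1item4}); and for Item (\ref{thm1item5}) factor $\phihat = \hat F_\omega \hat B_\omega\,\omega_1$, invoke Lemma \ref{switchlemma}, and compute $\partial_y\rho|_J = -r$, $\partial_y\mu|_J = 2i\bar b$ to get $\partial_y\Psi|_J = -4(r-\Im b)$. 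One cosmetic point: you write $\phihat(x) \in \uu\cdot\omega_1 \subset \mathcal{P}_1$, but the definition is $\mathcal{P}_1 = \uu_1\cdot\omega_1\cdot\uu^\C_+$, so you should note that $\phiom(0)=I$ and connectedness of $J$ keep $\phiom|_J$ in $\uu_1$, not merely in $\uu$.
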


%______________________________________________________________
%**************************************************
\begin{proof}
\noindent \textbf{Items \ref{thm1item1}-\ref{thm1item3}:}
The Maurer-Cartan form of $\phihat = \phiom \omega_1$ is given by
\beq \label{phiinvdphi}
\phihat^{-1} \dd \phihat = \bbar ir + \tilde b  &
     ~~ a \lambda^{-1} + \tilde b \lambda - \tilde a \lambda^3 \\
      2i \left( \frac{1}{2i}(b-\tilde b) - r\right) \lambda^{-1}  ~~ &
       -ir - \tilde b  \ebar \dd z,
\eeq
and we assumed $a$ is non-vanishing, so this 
is a standard holomorphic potential.
Since $\xihat_\omega$ is $Lie(\uu)$-valued along $\real$, it follows that 
$\phiom$ maps $J \subset \real$ into $\uu$.  Therefore $\phihat= \phiom \, \omega_1$
maps $J$ into $\mathcal{P}_1$, by definition of $\mathcal{P}_1$.
Hence items \ref{thm1item1}-\ref{thm1item3}
follow from Theorem \ref{summarythm1} and equation (\ref{equivsym}) above.

%____________________
\noindent \textbf{Item \ref{thm1item4}:}
The first statement follows from Theorem \ref{summarythm1}, so we 
are left with the second statement concerning the regularity of
 $f^\lambda \big|_J$.
 
   First, since $\phiom(z) \in \uu \subset \bigcell$ for real values of $z$,
 it follows that the set $W = \phiom^{-1} (\bigcell)$ is open (and, in 
 fact dense, see the proof of Theorem 4.1 of \cite{brs}) and  contains $J$.  Hence, pointwise on this set, we can decompose
 \beqas
\phiom = \hat F_\omega  \hat B_\omega, \hspace{1cm} \hat F_\omega \in \uu, \hspace{.5cm} \hat B_\omega \in \uhat\\
\hat F_\omega \, |_J = \phiom \, |_J, \hspace{1cm} \hat B_\omega \, |_J = I.
\eeqas
We will call $\hat F_\omega$ a \emph{singular frame} for $f^\lambda$.
Since $\hat B_\omega$ is normalized, the factors $\hat F_\omega$ and $\hat B_\omega$ depend
real analytically on $z$, and we can write
\bdm
\hat B_\omega = \bbar \rho & 0 \\ 0 & \rho^{-1} \ebar + \bbar 0 & \mu   \\ \nu & 0 \ebar \lambda + o(\lambda^2),
\edm
where $\rho$ is a positive real valued function, and $\mu$ 
and $\nu$ are $\C$-valued.
Now on $W$, we have $\phihat = \hat F_\omega \hat B_\omega \, \omega_1$, 
and
since $\hat B_\omega = I$ along $J$, we have, for $z \in J$,
\beqas
\hat F_\omega ^{-1} \dd \hat F_\omega &=& \phiom^{-1} \dd \phiom - \dd \hat B_\omega\\
  &=& \xihat_\omega \, - \,  \bbar \dd \rho & 0 \\ 0 & -\rho^{-2} \dd \rho \ebar -
   \bbar 0 & \dd \mu \\ \dd \nu & 0 \ebar \lambda  + o(\lambda^2).
\eeqas
Because $\hat F_\omega$ is $\uu$-valued, it now follows from equation (\ref{singularhol}) 
and the reality condition defining $\uu$
that, for $z \in J$,
\beqas
\hat F_\omega ^{-1} \dd \hat F_\omega &=& 
   \left\{ \bbar 0 & 0 \\ -a & 0 \ebar \lambda^{-3} + 
   \bbar -a & 0 \\ 0 & a \ebar \lambda^{-2} + \bbar 0 & a \\ b & 0 \ebar \lambda^{-1} \right\} \dd z \\
     && 
   + \bbar i r & 0 \\ 0 & -i r \ebar \dd z -  \bbar \dd \rho & 0 \\ 0 & -\rho^{-2} \dd \rho \ebar\\   
  &&  + \left\{ \bbar 0 &  \bar b \\ \bar a & 0 \ebar \lambda
     + \bbar \bar a & 0 \\ 0 & -\bar a \ebar \lambda^2  + 
       \bbar 0 & -\bar a \\ 0 & 0 \ebar \lambda^3 \right\} \dd \bar z,  
\eeqas
and it is necessary that
\bdm
\bbar 0 &  \bar b \\ \bar a & 0 \ebar \lambda \dd z - 
   \bbar 0 & \dd \mu \\ \dd \nu & 0 \ebar \lambda = 
          \bbar 0 &  \bar b \\ \bar a & 0 \ebar \lambda \dd \bar z.
\edm
The (1,2) component of this matrix equation is equivalent to
\bdm
\mu_x =0, \hspace{1cm} \mu_y = 2i \bar b.
\edm
The reality condition for $\hat F_\omega^{-1} \dd \hat F_\omega$ also requires that
the (1,1) component of the term constant in $\lambda$ is pure imaginary, so
\bdm
ir(\dd x + i \dd y) - \rho_x \dd x - \rho_y \dd y = i (p \dd x + q \dd y),
\edm
for some real functions $p$ and $q$.  The real part of this equation is
equivalent to
\bdm
\rho_x = 0, \hspace{1cm} \rho_y = -r.
\edm

Writing the $(1,1)$ term as $ir \dd z -(-r) \dd y = \frac{ir}{2} \dd z + \frac{ir}{2} \dd \bar z$,
we have  just seen that, along $J$, the singular frame has Maurer-Cartan form:
\beqa
\hat F_\omega^{-1} \dd \hat F_\omega &=& \hat U _\omega \dd z + \hat V _\omega  \dd \bar z, \nonumber \\
 \hat U _\omega = \bbar -a \lambda^{-2} + \frac{ir}{2} & a \lambda^{-1} \\
            -a \lambda^{-3} + b \lambda^{-1} & a \lambda^{-2} -   \frac{ir}{2}   \ebar,
            &&
  \hat V _\omega = \bbar  \frac{ir}{2} + \bar a \lambda^2 & \bar b \lambda -
              \bar a \lambda^3 \\      \bar a \lambda &
               - \frac{ir}{2}- \bar a \lambda^2 \ebar. \label{uhatandvhat}
\eeqa
Differentiating the Sym-Bobenko formula (\ref{symformula}), we obtain
\beqas
\hat F_\omega^{-1} f^\lambda_z \, \hat F_\omega &=& -\frac{1}{2H} \left( [ \hat U _\omega , e_3 ] + 2 i\lambda \frac{\partial}{\partial \lambda} \hat U _\omega  \right),\\
  &=& -\frac{2 ia \lambda^{-2}}{H}  \bbar 1 & - \lambda \\ \lambda^{-1} & -1 \ebar,
\eeqas
and similarly,
\bdm
\hat F_\omega^{-1} f^\lambda_{\bar z} \, \hat F_\omega = \frac{-2i \bar a \lambda^2}{H} 
   \bbar 1 & - \lambda \\ \lambda^{-1} & -1 \ebar.
\edm
Adding and subtracting these equations leads to
\beqa  \label{singframeeqns}
\hat F_\omega^{-1} f^\lambda_{x} \, \hat F_\omega & = &  \frac{-4 \Re (a \lambda^{-2})}{H}
    \bbar i & -i \lambda \\ i \lambda^{-1} & -i \ebar, \\
 \hat F_\omega^{-1} f^\lambda_{y} \, \hat F_\omega & = &  \frac{4 \Im (a \lambda^{-2})}{H}
    \bbar i & -i \lambda \\ i \lambda^{-1} & -i \ebar. \nonumber
 \eeqa
Now, since $\hat F_\omega(z, \bar z, \lambda)$ is an element of  $SU_{1,1}$, it acts by isometries
on $\mathfrak{su}_{1,1} = \LL^3$, and it follows that $f^\lambda_x$ and 
$f^\lambda_y$ are parallel and null. Moreover, $f^\lambda_x \in \LL^3$ is the
zero vector if and only if $\Re(a \lambda^{-2}) = 0$.   Since $a$ is holomorphic,
either the real part of $a \lambda^{-2}$ is  equivalent to zero along the
real line, in which case
$f^\lambda (J)$ is a single point, or $\Re (a \lambda^{-2})$ 
 has isolated zeros on $J$, and $f^\lambda \big|_J$ is regular
 away from these zeros.
 
%%%%%%%%%%
\noindent \textbf{Item \ref{thm1item5}:}
By Lemma \ref{switchlemma}, $\phihat$ is in the big 
cell if and only if 
\beq  \label{absinequality}
h \,:= \, \left|\mu + \rho\right|^2 \left| \rho \right|^2 -1 \neq 0.
\eeq
Now we know that for $z \in J$, we have $\rho = 1$ and $\mu= 0$,
so $h= |\mu + \rho|^2 \, | \rho|^2 -1 = 0$ along $J$ as expected.
To guarantee that $\Sigma^\circ$ is non-empty, we need 
 to ensure that $h$ is not constant, and for this it is sufficient to require
that $\frac{\partial h}{\partial y} \neq 0$ at at least one point $z \in J$.
Using the above expressions for $\rho_y$ and $\mu_y$, and $\rho=1$, $\mu=0$,
one computes
\beqas
\frac{\partial h}{\partial y} &=& 4 \rho_y + (\mu_y + \bar \mu_y) \\
  &=& -4r + 4 \Im b.
\eeqas
If this expression is non-zero at 
$z_0 \in J$, then it is also non-zero on a neighbourhood $\mathcal{N}$ of $z_0$,
and, because $h=0$ and $h_y \neq 0$ on $J\cap\mathcal{N}$  it follows that, taking $\mathcal{N}$ smaller if necessary, the zero set 
$C \cap \mathcal{N}$ of $h \big|_{\mathcal{N}}$ is precisely
$J \cap \mathcal{N}$.
\end{proof}

%%%%%%%%%%%%%%%%%%%%%%%%%%%%%%%%%%%%%%%%%%%%%%%%%%%%%%%%%%%%

\textbf{Note:} From here on, to simplify notation, we consider mainly $f = f^1$, 
rather than $f^{\lambda_0}$ for other values of $\lambda_0 \in \SSS^1$.  We will also use
the convention $X := \hat X \big|_{\lambda =1}$, if $\hat X$ depends on $\lambda$.

One  has the following formulae for the metric and Hopf differential of the surface just constructed:

%--------
\begin{lemma} \label{metriclemma}
Let $f = \stilde_1 (\phiom) = \stilde_1 (\phihat)  : \Sigma_s \to \LL^3$ be a generalized $H$-surface constructed from a 
singular holomorphic frame, factored on $\phiom^{-1}(\bigcell)$ as $\phiom = \fomega \hat B_\omega$ as in 
 Theorem \ref{thm1}, and write the Fourier expansion of
  the matrix valued function 
  $\hat B_\omega \in \uhat$ as:
  \bdm
\hat B_\omega = \bbar \rho & 0 \\ 0 & \rho^{-1} \ebar + \bbar 0 & \mu   \\ \nu & 0 \ebar \lambda + o(\lambda^2).
\edm

Let $\Sigma^\pm:= \phihat^{-1}(\bigcell^\pm)$. Then:  
%%%%%%%%%%
\begin{enumerate}
\item  \label{lemmaitem1}
The  metric $\dd s^2$, induced by $f$  on
 $\phiom^{-1}(\bigcell)$, is given 
by the formula 
\beqa \label{metric}
\dd s^2 = 4 g^2 \,\, (\dd x^2 + \dd y^2), \hspace{1cm} 
     g = \varepsilon e^{u} =  \varepsilon \frac{\chi^2 |a|}{H}, \\
     \varepsilon(z) = \pm 1, \,\,\,\textup{for } z \in \Sigma^{\pm}, 
     \hspace{1cm}
\chi = \sqrt{\left| |\mu + \rho|^2 - \rho^{-2} \right|}. \label{chidef}
\eeqa

The function $g$ is real analytic on $\phiom^{-1}(\bigcell) \setminus \real$,
and extends as a $C^1$ function across the real line.
It has the following  values at a point  
$z_0 \in \real \cap  \phiom^{-1}(\bigcell)$:
\beq \label{realgequations}
 g = 0, \hspace{1cm} 
 \frac{\partial g}{\partial x} = 0, \hspace{1cm}
\frac{\partial g}{\partial y} = \frac{4 |a|(\Im b - r)}{H}.
\eeq
\vspace{2ex}

%%%%%%%%%%%%%%%%%
\item \label{lemmaitem2}
The Hopf differential on $\phiom^{-1}(\bigcell)$ is given by $Q\dd z$, where
\beq \label{hopfdiff}
Q = \frac{2a}{H}(b-\tilde b - 2 i r).
\eeq
\end{enumerate}
\end{lemma}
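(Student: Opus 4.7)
The plan is to combine the Iwasawa factorization $\phiom = \fomega \hat B_\omega$ on $\phiom^{-1}(\bigcell)$ with Lemma \ref{switchlemma} applied to $\hat B_\omega \, \omega_1 = \hat X \hat B'$, producing the factorization $\phihat = \hat F \hat B'$ on $\phiom^{-1}(\bigcell) \cap \phihat^{-1}(\bigcell)$ with $\hat F := \fomega \hat X \in \uu$. This realises $\hat F$ as an extended frame for $f$ in the sense of Lemma \ref{lemma1}, and the conformal factor and Hopf differential can then be read off from the $\lambda^{-1}$-coefficient of $\hat F^{-1} \hat F_z$ via (\ref{withlambda}) together with the associated diagonal gauge freedom.

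The algebraic heart of the argument is the identity $(\rho')^2 = \chi^2$, where $\rho' := [\hat B']_{11}\big|_{\lambda = 0}$.  Substituting $u = \varepsilon(\mu + \rho)/\chi$ and $v = 1/(\rho\chi)$ from (\ref{explicituandv}) into the explicit $(1,1)$ entry of $\hat B'$ given by (\ref{explicitfact}), and using the Fourier expansion of $\hat B_\omega$, a direct calculation gives
\begin{equation*}
\rho' = \varepsilon \bar u(\rho + \mu) - v/\rho = \frac{|\mu + \rho|^2 - \rho^{-2}}{\chi} = \varepsilon \chi,
\end{equation*}
so $(\rho')^2 = \chi^2$.

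Next I compute the $\lambda^{-1}$-coefficient of $\hat F^{-1} \hat F_z$.  Writing $\xihat := \phihat^{-1} \dd \phihat$, formula (\ref{phiinvdphi}) gives $[\xihat/\dd z]_{\lambda^{-1}} = \bbar 0 & a \\ b - \tilde b - 2ir & 0 \ebar$.  Since $\hat F^{-1} \dd \hat F = \hat B' \xihat \hat B'^{-1} - (\dd \hat B') \hat B'^{-1}$ and $\hat B'$ is holomorphic in $\lambda$ on the unit disc, only the first term contributes at $\lambda^{-1}$; conjugation by $\hat B'|_{\lambda = 0}$ scales the off-diagonal entries by $(\rho')^{\pm 2} = \chi^{\pm 2}$, yielding
\begin{equation*}
[\hat F^{-1} \hat F_z]_{\lambda^{-1}} = \bbar 0 & \chi^2 a \\ (b - \tilde b - 2ir)/\chi^2 & 0 \ebar.
\end{equation*}
By Lemma \ref{lemma1}, there is a constant-in-$\lambda$ diagonal gauge $D = \bbar d & 0 \\ 0 & 1/d \ebar \in SU_{1,1}$ (so $|d| = 1$) such that $\hat F \cdot D$ is the extended coordinate frame, whose Maurer-Cartan form has the standard form (\ref{withlambda}).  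Conjugation by $D$ scales the $(1,2)$ and $(2,1)$ entries of $[\hat F^{-1} \hat F_z]_{\lambda^{-1}}$ by $d^{-2}$ and $d^2$ respectively, so both the individual absolute values and the product of the two entries are gauge invariant.  Matching with $[\hat U]_{12, \lambda^{-1}} = -iHe^u$ and $[\hat U]_{21, \lambda^{-1}} = \tfrac{i}{2} e^{-u} Q$ from (\ref{withlambda}), absolute values give $He^u = \chi^2 |a|$, hence $e^u = \chi^2 |a|/H$ and the metric $\dd s^2 = 4 e^{2u}(\dd x^2 + \dd y^2)$; inserting the sign $\varepsilon = \pm 1$ that tracks the component $\bigcell^\pm$ gives $g = \varepsilon \chi^2 |a|/H$, proving (\ref{metric}).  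The product gives $\tfrac{H}{2} Q = a(b - \tilde b - 2ir)$, proving (\ref{hopfdiff}).

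For the analytic properties and boundary values: using $\varepsilon \chi^2 = |\mu + \rho|^2 - \rho^{-2}$, one can rewrite $g = (|\mu + \rho|^2 - \rho^{-2}) |a|/H$, which is real analytic on $\phiom^{-1}(\bigcell)$ (since $\rho, \mu, \bar\mu, a, \bar a$ are real analytic and $a$ is non-vanishing), so $g$ certainly extends $C^1$ across $J$.  The values on $J$ follow immediately from $\rho|_J = 1$, $\mu|_J = 0$, $\rho_x|_J = \mu_x|_J = 0$, $\rho_y|_J = -r$, $\mu_y|_J = 2i \bar b$ as derived in the proof of Theorem \ref{thm1}:  we get $g|_J = 0$ and $g_x|_J = 0$, while the same calculation used there to obtain $h_y|_J = 4(\Im b - r)$ yields $g_y|_J = 4|a|(\Im b - r)/H$.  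The main obstacle is the algebraic verification of $(\rho')^2 = \chi^2$ in the first step, which requires careful tracking of the sign $\varepsilon$ through Lemma \ref{switchlemma}'s formulas; the remaining steps are standard DPW bookkeeping.
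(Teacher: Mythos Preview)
Your proof is correct and follows essentially the same route as the paper: factor $\phihat = \fomega \hat X \hat B'$ via Lemma~\ref{switchlemma}, compute the $\lambda^{-1}$-coefficient of $\hat F^{-1}\hat F_z$ by conjugating (\ref{phiinvdphi}) by $\hat B'|_{\lambda=0}$, and read off the metric and Hopf differential by comparison with (\ref{withlambda}).  Two minor streamlinings are worth noting: the paper explicitly constructs the diagonal gauge $D = \textup{diag}(e^{i(\phi/2+\pi/4)}, e^{-i(\phi/2+\pi/4)})$ to obtain the coordinate frame and then reads off $Q$ directly, whereas your observation that the \emph{product} of the off-diagonal $\lambda^{-1}$-entries is invariant under the unitary diagonal gauge bypasses this and yields (\ref{hopfdiff}) in one line; and your rewriting $g = (|\mu+\rho|^2 - \rho^{-2})\,|a|/H$ immediately exhibits $g$ as real analytic on all of $\phiom^{-1}(\bigcell)$, which is slightly cleaner than the paper's separate verification of the $C^1$ extension via one-sided limits of derivatives.
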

%_________________________________________
\begin{proof}
\noindent \textbf{Item \ref{lemmaitem1}:}
On $\phiom^{-1}(\bigcell) \cap \phihat^{-1}(\bigcell)$ we have, using Lemma \ref{switchlemma},
\beqas
\phihat &=& \fomega \hat B_\omega \omega_1 = \fomega \hat X \hat B^\prime \\
&=& \hat F \hat B,     
\eeqas
where $\fhat = \varepsilon \fomega \hat X$, $\bhat = \varepsilon \bhat^\prime$,
and  $X$ and $\bhat^\prime$ are given
 in equation (\ref{explicitfact}).
Writing the Fourier expansion
\bdm
\hat B = \bbar \chi & 0 \\ 0 &  \chi^{-1} \ebar + o(\lambda),  
\edm
the choice of $u$ and $v$ in $\bhat^\prime$ given in Lemma \ref{switchlemma} gives 
the formula (\ref{chidef}) for $\chi$. Since $\chi >0$, this is the unique Iwasawa factorization
$\phihat = \fhat \bhat$ with $\bhat \in \uhat$.

Using this, and the expression (\ref{phiinvdphi}) for $\phihat^{-1} \dd \phihat$,
 one obtains
\beqas
\hat F^{-1} \dd \hat F &=& \hat B \phihat^{-1} \dd \phihat \hat B^{-1} + \hat B \dd \hat B^{-1}\\
 &=& \bbar 0 & \chi^2 a \, \lambda^{-1} \\ 
 \chi^{-2}(b-\tilde b - 2i r) \, \lambda^{-1} & 0 \ebar   \dd z 
 + o(\lambda^0).
 \eeqas

To calculate the metric, the formulae (\ref{derivatives}), at $\lambda=1$, for $f_z$ and $f_{\bar z}$ then give:
\beqas
f_x &=&  \frac{2i}{H} \, F  \,  \bbar 0 & \chi^2 a \\ 
    - \chi^2 \bar a & 0 \ebar \,  F^{-1} \\
&=&
    \frac{2\chi^2 |a|}{H} \,  F_C   \, e_1 \,  F_C^{-1},
\eeqas
where
\beq \label{coordframe}
 \fhat_C := \fhat  D, \hspace{1cm}
  D =  \bbar e^{i(\frac{\phi}{2} + \frac{\pi}{4})} &0 \\
                             0 &  e^{-i(\frac{\phi}{2} + \frac{\pi}{4})}  \ebar,
           \hspace{1cm}     a = |a|e^{i\phi}.
\eeq 
A well-defined choice for the function $\phi$ can be made
because $a$ is non-vanishing on the
simply connected set $\Sigma$.
Similarly we have
\bdm
     f_y =   \frac{2\chi^2 |a|}{H} \,  F_C   \,  e_2 \,  F_C^{-1}.
\edm
It follows that $\fhat_C$ is the coordinate frame defined by equations 
(\ref{framedefn}) and that 
$2e^u = \frac{2\chi^2 |a|}{H}$ (recalling that we have assumed $H$ is positive), which gives the formula (\ref{metric})
for the metric.  The factor $\varepsilon$ is included  to achieve 
continuity of the derivatives of $g$ across $\real$.

The function $g = \varepsilon \frac{\chi^2 |a|}{H}$ is real analytic everywhere on $\phiom^{-1}(\bigcell) \setminus J$, 
because $\rho$ and $a$ are non-vanishing and $g$ is non-vanishing
 on this set. It has the
limiting value zero for $z \to J$, because $\rho\big|_J = 1$ and $\mu \big|_J =0$.
 To compute the limits of the
derivatives at  (\ref{realgequations}) for real values of $z$, 
one can differentiate the formula
$\chi = \sqrt{\varepsilon \left( |\mu + \rho|^2 - \rho^{-2}\right)}$, 
with $\varepsilon = \pm 1$ for $z \in \Sigma^\pm$,
 and use the equations 
$\mu_x \to 0 = \rho_x \to  0$,  $\mu_y \to 2i \bar b$,  $\rho_y \to -r$, found in the proof of
Theorem \ref{thm1}.

%%%%%%%%%%%%%%%%
\noindent \textbf{Item \ref{lemmaitem2}:}
The standard coordinate frame  $\hat F_C$, found above, satisfies
\beqas
\hat F_C^{-1} \dd \hat F_C &=& 
  \bbar 0 & -i  \chi^2 |a| \, \lambda^{-1} \\ 
 i \frac{a}{|a|} \chi^{-2}(b-\tilde b - 2i r) \, \lambda^{-1} & 0 \ebar   \dd z + o(1),\\
&=& \hat U \dd z + \hat V \dd \bar z,
\eeqas
where $\hat U$ is given at (\ref{withlambda}).
Comparing the off-diagonal components of the above matrix with those of 
$\hat U$,  and using $\chi^2 = \frac{e^u H}{|a|}$,
we have
\bdm
i\frac{a}{|a|} \frac{|a|}{H e^u} ( b-\tilde b -2 i r) = \frac{1}{2}ie^{-u}Q,
\edm
which is the expression (\ref{hopfdiff}) for $Q$.
\end{proof}

%%%%%%%%%%%%%%%%%%%%%%%%%%%%%%%%%%%%%%%%%%%%%%%%%%%%%%%%%%%%%%%
%%%%%%%%%%%%%%%%%%%%%%%%%%%%%%%%%%%%%%%%%%%%%%%%%

%%%%%%%%%%%%%%%%%%%%%%%%%%%%%%%%%%%%%%%%%%%%%%%%%%%%%%%%%%%%%%%%%
%%%%%%%%%%%%%%%%%%%%%%%%%%%%%%%%%%%%%%%%%%%%%%%%
\subsection{The converse of Theorem \ref{thm1}}
Next we show  that every 
generalized $H$-surface that contains a curve in the coordinate
domain of its singular set
can be locally represented, around that curve, by a standard 
singular holomorphic potential.
 
If $\phihat: \Sigma \to \uu^\C$
is a holomorphic map,
 and  $\phihat$  maps at least one point into $\bigcell$, then,
according to Theorem \ref{summarythm1}, the singular set
$C = \phihat^{-1}(\mathcal{P}_1)$ 
   is locally given as the zero
set of a non-constant real analytic function $h: \real^2 \to \real$.
In our setting, $h$ is given by the formula 
 (\ref{absinequality}), $
h \,:= \, \left|\mu + \rho\right|^2 \left| \rho \right|^2 -1$.

%%%%%%%%%%%%%%%%%%%%%%%%%
\begin{definition} \label{degendef}
A point $z_0  \in \phihat^{-1}(\mathcal{P}_1)$ is said to be a
 \emph{non-degenerate singular point}   
 if  the derivative map $\dd h$ has rank 1 at $z_0$, and \emph{degenerate} if 
 $\dd h = 0$.  If, at a point $z_0 \in \phihat^{-1}(\mathcal{P}_1)$ we have
 the milder condition that there exists a real analytic curve $\gamma: (-\delta,\delta) \to \Sigma$, for some $\delta>0$, with $\gamma(0)= z_0$
 and $\gamma((-\delta,\delta)) \subset \phihat^{-1}(\mathcal{P}_1)$,
then we call $z_0$
  \emph{weakly non-degenerate}.
 A generalized $H$-surface is \emph{non-degenerate} or
 \emph{weakly non-degenerate} if  all singular points have the corresponding property.
 \end{definition}
For a surface constructed via Theorem \ref{thm1}, the non-degeneracy condition
is $\Im b - r \neq 0$.
%%%%%%%%%%%%%%%%%%%%%%%%%%%%%%%%%%%%%%%%%%%%%%%%%%%%%%%%%%%%%%%%%
%%%%%%%%%%%%%%%%%%%%%%%%%%%%%%%%%%%%%%%%%%%%%%%

%_________________________________________
\begin{theorem} \label{thm2}
Let $f: \Sigma_s \to \LL^3$ be a generalized $H$-surface with a 
corresponding standard potential $\xihat$ and holomorphic extended frame
$\phihat$, with $f = \stilde_1(\phihat)$.
Let $z_0 \in C = \phihat^{-1}(\mathcal{P}_1)$
 be a weakly non-degenerate singular point. 
 Then, on an open set $\Omega \subset \Sigma_s$,
 containing $z_0$,
there exist conformal coordinates and a standard singular holomorphic potential 
$\xihat_\omega$, of the form (\ref{singularhol}), with corresponding
singular holomorphic extended frame $\hat \Psi_\omega$,
 such that $f$ is represented
on $\Omega$ by the surface $\stilde_1(\hat \Psi_\omega)$.
\end{theorem}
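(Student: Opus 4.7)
The plan is to use the DPW gauge freedom to replace $\phihat$ by a new holomorphic extended frame which sends the singular curve (made into the real axis by a conformal change of coordinates) into $\uu_1\cdot\omega_1$, and then to read off a standard singular holomorphic potential from its Maurer--Cartan form.

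First, by weak non-degeneracy there is a real analytic curve $\gamma:(-\delta,\delta)\to\Sigma$ through $z_0$ with $\phihat\circ\gamma\subset\mathcal{P}_1$. After a conformal change of coordinates and shrinking $\Sigma_s$ to a simply connected open neighbourhood $\Omega$ of $z_0$, we may arrange $z_0=0$ and $\gamma((-\delta,\delta))=J:=\Omega\cap\real$. The product map $(\hat F,\hat B)\mapsto\hat F\,\omega_1\,\hat B$ from $\uu_1\times\uu^\C_+$ into $\mathcal{P}_1$ is a real analytic submersion admitting local real analytic sections; using one such, we select real analytic families $\hat F_0:J\to\uu_1$ and $\hat B_0:J\to\uu^\C_+$ with $\phihat(x)=\hat F_0(x)\,\omega_1\,\hat B_0(x)$. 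Extending $\hat B_0^{-1}$ analytically in $z$ gives a holomorphic map $\hat g:\Omega\to\uu^\C_+$ on a possibly smaller neighbourhood, still called $\Omega$. Set $\hat\Psi:=\phihat\,\hat g$ and $\hat\Psi_\omega:=\hat\Psi\,\omega_1^{-1}$; then $\hat\Psi_\omega(x)=\hat F_0(x)\in\uu_1$ for every $x\in J$.

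Next, I would verify that $\xihat_\omega:=\hat\Psi_\omega^{-1}\dd\hat\Psi_\omega$ has the form \eqref{singularhol}. Since $\hat g$ is holomorphic and $\uu^\C_+$-valued, $\hat\Psi^{-1}\dd\hat\Psi=\hat g^{-1}\xihat\,\hat g+\hat g^{-1}\dd\hat g$ is again a holomorphic standard potential whose leading off-diagonal coefficient inherits the non-vanishing regularity of $\xihat$. Conjugation by $\omega_1$ yields $\xihat_\omega=\omega_1(\hat\Psi^{-1}\dd\hat\Psi)\omega_1^{-1}$; a direct matrix calculation shows the Fourier expansion now begins at $\lambda^{-3}$ with the precise matrix shapes at $\lambda^{-3},\lambda^{-2},\lambda^{-1},\lambda^0$ appearing in \eqref{singularhol}. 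The coefficients at $\lambda^{1},\lambda^{2},\lambda^{3}$ and the vanishing of all higher Fourier modes are forced by the reality condition $\tau(\xihat_\omega)=\xihat_\omega$ along $J$, which holds because $\hat\Psi_\omega(J)\subset\uu$: writing $\xihat_\omega=\sum_j A_j(z)\lambda^j\,\dd z$, the relations $A_j(x)=-\Ad_\sigma\overline{A_{-j}(x)}^T$ for $x\in\real$ extend by holomorphy in $z$ to $A_j(z)=-\Ad_\sigma\overline{A_{-j}(\bar z)}^T$ on $\Omega$; this produces exactly the identifications $\tilde a(z)=\overline{a(\bar z)}$, $\tilde b(z)=\overline{b(\bar z)}$, $r(z)=\overline{r(\bar z)}$ of Definition \ref{singularholdef}, and kills every mode with $|j|\ge 4$ because the corresponding $A_{-j}$ already vanish.

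Finally, to enforce the normalization $\hat\Psi_\omega(0)=I$, I would premultiply by the constant loop $\hat F_0(0)^{-1}\in\uu_1$; by the transformation law $\sym(X\hat F)=\Ad_{X|_{\lambda=1}}\sym(\hat F)+\textup{translation}$ this only changes $\stilde_1(\hat\Psi_\omega)$ by an ambient isometry of $\LL^3$, which is the meaning of ``represented'' in the conclusion. Before normalization, $\stilde_1(\hat\Psi_\omega)=f$ on the open dense subset of $\Omega$ where both $\hat\Psi$ and $\hat\Psi_\omega$ lie in $\bigcell$, since right multiplication by $\hat g\in\uu^\C_+$ does not alter the $\uu$-factor of the Iwasawa decomposition (so $\stilde_1(\hat\Psi)=\stilde_1(\phihat)=f$) and $\stilde_1(\hat\Psi_\omega)=\stilde_1(\hat\Psi)$ is precisely \eqref{equivsym}. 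The main technical obstacle is the real analytic selection of the factorization of $\phihat|_J$ inside $\mathcal{P}_1$ and the holomorphic extension of its $\uu^\C_+$-factor, which rests on the Banach submanifold structure of the orbit $\uu_1\,\omega_1\,\uu^\C_+$, in the same spirit as the explicit factorization of Lemma \ref{switchlemma} for the adjacent big cell.
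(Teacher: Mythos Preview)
Your overall strategy coincides with the paper's: gauge $\phihat$ on the right by a holomorphic $\uu^\C_+$-valued map so that along the real axis the modified frame lies in $\uu\cdot\omega_1$, and then deduce the form \eqref{singularhol} from $\omega_1$-conjugation together with the $Lie(\uu)$ reality condition (your argument for the vanishing of modes $|j|\ge4$ via the identity theorem is exactly the content of Section~\ref{singularholsect}).

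The genuine difference is at the step you yourself flag as the ``main technical obstacle''. You assert an abstract real-analytic section of the product map $\uu_1\times\uu^\C_+\to\mathcal{P}_1$ and then extend the resulting Banach-valued $\hat B_0^{-1}$ holomorphically; neither claim is proved. The paper avoids both issues constructively. First it factors $\phihat(z_0)=\hat F_0\,\omega_1\,\hat B_0$ only at the \emph{single point} $z_0$ and sets $\phiom:=\phihat\,\hat B_0^{-1}\omega_1^{-1}$; since $\phiom(z_0)=\hat F_0\in\bigcell$, the analytic \emph{big-cell} Iwasawa splitting $\phiom=\hat F_\omega\hat B_\omega$ of Theorem~\ref{iwasawathm} is available near $z_0$. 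Second, along $J$ it applies the $\mathcal{P}_1$ case of Lemma~\ref{switchlemma} to $\hat B_\omega(x,0)\,\omega_1$, producing $R_\theta(x)\,\omega_1\,\widetilde B(x)$ with $R_\theta$ a \emph{constant-in-$\lambda$} diagonal rotation depending on a single real-analytic scalar $\theta(x)$. This yields $\phihat|_J=F_*(x)\,\omega_1\,B_*(x)$ with $F_*=\hat F_\omega R_\theta$.

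The holomorphic extension of $B_*$ is then obtained not by extending an infinite-dimensional object directly, but by observing that the Maurer--Cartan form of $\hat F_\omega$ has only finitely many Fourier modes in $\lambda$ (it is bounded below by $-3$ from the $\omega_1$-conjugation of a standard potential, and hence above by $+3$ from the $Lie(\uu)$ reality), so one extends those finitely many real-analytic scalar coefficients, re-integrates the ODE to get $\check F_\omega$, extends $R_\theta$ trivially, and then \emph{defines} $B_*(z):=\omega_1^{-1}\check R_\theta^{-1}(z)\check F_\omega^{-1}(z)\phihat(z)$. What your abstract appeal to Banach submanifold structure would buy is brevity; what the paper's route buys is an argument that actually closes, using only the explicit formulae of Lemma~\ref{switchlemma} and the finiteness of the relevant $\lambda$-expansion.
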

%-------------------
\begin{proof}
 If $z_0 \in C$ and $\phihat(z_0) = \hat F_0 \, \omega_1 \hat B_0$ is the Iwasawa factorization,
 set $\phihat_\omega (z) = \phihat (z) \, \hat B_0^{-1} \, \omega_1^{-1}$.
Then
$\phihat_\omega(z_0) = \hat F_0 \in \bigcell$, so locally
we can Iwasawa factorize 
$\phihat_\omega (z) = \hat F_\omega (z) \, \hat B_\omega (z)$,
with the two factors in $\uu$ and $\uhat$ respectively.   Now
\beq  \label{phihatfact}
\phihat (z) = \phiom (z) \,  \omega_1 \hat B_0    = 
  \hat F_\omega (z) \, \hat B_\omega (z)\, \omega_1 \hat B_0,
\eeq
  and this is in the big cell precisely when 
$\hat B_\omega (z) \, \omega_1$ is.
As $z_0$ is weakly non-degenerate, there is a curve through $z_0$
which is mapped by $\phihat$ into $\mathcal{P}_1$.
  After a conformal change of
coordinates (taking a smaller neighbourhood if necessary) we can assume 
that this curve  is an open interval $J$ on
the line $\{y=0\} \subset \C$, and that $z_0$ is the origin. By Lemma \ref{switchlemma}, we can, on the interval $J$, write 
\bdm
\hat B_\omega (x,0)\, \omega_1 = 
R_\theta (x) \, \omega_1   \widetilde{B} (x), \hspace{1cm}
  R_\theta (x) := \bbar e^{-i\theta(x)/2}& 0 \\ 0 & e^{i\theta(x)/2} \ebar \in \uu,
  \hspace{.5cm} \widetilde{B}(x) \in \uu^\C_+,
\edm
where $R_\theta$ and $\widetilde{B}$ are real analytic in $x$.
Substituting into equation (\ref{phihatfact}), this means
\bdm
\phihat \big|_J (x) 
   =  F_*(x) \, \omega_1 \, B_*(x), 
   \hspace{1cm} F_*(x) := \hat F_\omega (x,0) \, R_\theta (x), \hspace{.5cm}
   B_*(x) :=\widetilde{B} (x) \, \hat B_0.
\edm
Now, by extending $\theta(x)$ analytically, 
 $R_\theta$ has a holomorphic extension  $\check R_\theta: \Omega \to \uu^\C$
 to some open set $\Omega$ containing
$I$. Similarly, since the Maurer-Cartan form of $\fomega \big|_J$, 
has only a finite number of real analytic functions in its Fourier expansion
in $\lambda$, this map also
has a holomorphic extension to
a map $\check F_\omega: \Omega \to \uu^\C$, taking $\Omega$ sufficiently small.
Therefore
 $B_* = \omega_1^{-1} \,\cdot  R^{-1}_\theta \, \cdot \, \hat F_\omega^{-1} \big|_J \,
   \cdot \, \phihat \big|_J$ 
 extends holomorphically to  a map $B_*: \Omega \to \uu^\C_+$, given by
 $B_*(z) = \omega_1^{-1} \, \cdot  \check R^{-1}_\theta(z) \, \cdot \, \check F_\omega^{-1}(z) \, \cdot \, \phihat (z)$.  This allows one to define a 
 holomorphic map 
\beqas
\hat \Psi (z) &:=& \phihat (z) \, B_*^{-1} (z)\\
  &=&   \check F_\omega (z) \,  \check R_\theta (z) \, \omega_1.
\eeqas
This has the property that
 $\stilde(\hat \psi (z)) = \stilde(\phihat(z))$, because
$B_*^{-1}(z) \in \uu^\C_+$ and therefore has no impact on the Iwasawa decomposition
of $\phihat$. Moreover, it is easy to verify that 
$\hat \Psi^{-1} \dd \hat \Psi$ is also a \emph{standard} holomorphic
potential, because right multiplication by a holomorphic map into $\uu^\C_+$
preserves the relevant properties.
 Finally, consider the translate,
 $\hat \Psi _\omega := \hat \Psi \omega_1^{-1}$. By definition,
 we have
 \bdm
 \hat \Psi _\omega \big|_J (x) = F_*(x) \,  \in \, \uu.
\edm
 Hence, as shown in Section \ref{singularholsect}, it follows that
$\hat \xi _\omega := \hat \Psi _\omega ^{-1} \dd  \hat \Psi _\omega$
is a singular holomorphic potential of the form given by
(\ref{singularhol}).
By construction, we have, on the open set $\Omega$,
\bdm
\stilde_1 \, (\hat \Psi _\omega)  
  =  \stilde_1 \, (\hat \Psi) 
  = \stilde_1 \, (\phihat)  = f.
\edm
\end{proof}

%****************************************************************
%****************************************************************

\section{Prescribing singularities: the singular Bj\"orling problem}  \label{bjorlingsection}
We showed that if $f: \Sigma_s \to \LL^3$
is a generalized $H$-surface, and $z_0 \in \Sigma_s$ is a weakly non-degenerate singular point,
 then, at least locally,
$f$ can be constructed from a singular frame $\hat F_\omega$ which satisfies the
equations (\ref{singframeeqns}), which, at $\lambda=1$, are:
\beq \label{singframe2}
 F_\omega^{-1} f_{x} \,  F_\omega  =  \frac{-4 \Re (a)}{H}
    \, (-e_2 + e_3), 
  \hspace{1cm}
  F_\omega^{-1} f_{y} \,  F_\omega  =   \frac{4 \Im (a)}{H}
    \, (-e_2 + e_3). 
 \eeq
The singular Bj\"orling problem can be stated as the task of 
 constructing the singular 
frame  $\fomega$ -- and hence the surface -- given that we 
only know $f$ (and therefore $f_x$, if $x$ is the parameter of the curve)
 and $f_y$ along the singular curve.
 
So suppose we have an open set $\Omega \subset \C$, with coordinates
$z=x+iy$, and  such that
$J = \Omega \cap \real =  (x_1, x_2)$ is a non-empty open interval containing the
origin. Suppose there exists a 
generalized $H$-surface $f: \Omega \to \LL^3$, satisfying the Bj\"orling data 
along $J$, and with associated holomorphic
extended frame $\phihat$,  such that $\phihat(J) \subset \mathcal{P}_1$.
 Since the vector fields $f_x$ and $f_y$ are both necessarily null 
 and parallel along $J$, we can, on this interval, and after an isometry of $\LL^3$, write
\bdm
f_x  = s \bbar i & e^{i\theta} \\ e^{-i\theta} & -i \ebar, \hspace{1cm}
f_y = t \bbar i & e^{i\theta} \\ e^{-i\theta} & -i \ebar,
\hspace{1cm}
 \theta(0) = -\frac{\pi}{2},
\edm
where $s$, $\theta$ and $t$ are all real analytic
 functions $J \to \real$.  We assume that $s$ and $t$ never vanish at the
 same time,
 so that $\theta$ is well defined on $J$.  
 
The equations (\ref{singframe2})
 suggest that we  choose a frame $F_0$ to be the rotation
 about the $x_3$-axis  which rotates 
 $[\cos \theta, \sin \theta, 0]^T \in \LL^3$ so that it points 
 in the $-e_2$ direction:
 \beq  \label{singcoordframe}
 F_0 = \bbar e^{i\frac{ 2\theta+\pi}{4}} & 0 \\
         0 &  e^{-i\frac{2\theta+\pi}{4}} \ebar.
\eeq
The normalization of $\theta$ means that $F_0(0) = I$. Then 
\beq \label{fxandfy}
F_0^{-1} f_x F_0 = s \, (-e_2 + e_3), \hspace{1cm} 
  F_0^{-1} f_y F_0 = t \, (-e_2 + e_3).
\eeq
Comparing this with equations (\ref{singframe2}), we must have, along $J$,
\bdm
\Re a = -\frac{Hs}{4}, \hspace{1cm} \Im a = \frac{H t}{4}.
\edm
Thus our regularity assumption on $s$ and $t$ is actually equivalent to the
assumption that the surface is a generalized $H$-surface, i.e. $a$ is non-vanishing.

To find the $\lambda$ dependence of the singular frame, we
know from equation (\ref{singularreal}) that this frame satisfies:
\beq \label{fomegamcform}
\fomega^{-1} \dd \fomega =
  \left\{ \bbar -a \lambda^{-2} & a\lambda^{-1} \\ -a \lambda^{-3} +b \lambda^{-1} \,\, & a\lambda^{-2} \ebar 
   + \bbar i r & 0 \\ 0 & -i r \ebar    
    + \bbar \bar a \lambda^2 &  \,\, \bar b \lambda -\bar a \lambda^3  \\ \bar a \lambda & -\bar a \lambda^2 \ebar  \right\} \dd x.
\eeq
Evaluating at $\lambda=1$ and comparing this with the Maurer-Cartan form of our frame:
\bdm
F_0^{-1} \dd F_0 = \bbar \frac{i}{2}\theta_x & 0 \\ 
    0 & - \frac{i}{2}\theta_x \ebar \dd x,
\edm
and using the above formula for $\Im a$, we obtain along $J$ the values : 
$r=  \frac{1}{2} (\theta_x +  H t)$, and  $b=  \frac{1}{2} i  H t$.
Substituting $a$, $b$ and $r$ into equation (\ref{fomegamcform}) and 
extending holomorphically, gives the singular holomorphic potential $\xiom$. The 
non-degeneracy condition $r- \Im b \neq 0$ for the singular curve is 
\beq \label{regcond}
 \theta_x \neq 0.
\eeq

%*****************************************************************
%___________________________________________________________________

\begin{theorem}  \label{bjorling}
Suppose given a real analytic function $f_0: J\to \LL^3$, such that $\frac{\dd f_0}{\dd x}$ is a null vector field,  and an additional null real analytic vector field $v(x)$, such
that $v(x)$ is a scalar multiple of $\frac{\dd f_0}{\dd x}(x)$ for
each $x \in J$. 
Suppose also that
 the vector fields do not vanish simultaneously
at any point $x \in J$. Let $s$ and $t$ be defined as above.
Let $\phiom$ be the singular holomorphic frame obtained by analytically extending
the 1-form $\fomega^{-1} \dd \fomega$ given by (\ref{fomegamcform}), with
\bdm
a = \frac{H}{4}(-s + i t), \hspace{1cm}
b=  \frac{1}{2} i  H t, \hspace{1cm}r=  \frac{1}{2} (\theta_x +  H t),
\edm
 to some 
simply connected open set
containing $J$, and integrating with initial condition $\phiom (0)=I$.
Suppose that $\phihat = \phiom \omega_1$ maps at least one point into $\bigcell$.
Then
 the  surface 
 \bdm
 f(x,y) := \stilde_1(\phiom(x,y)) + \frac{1}{2H} e_3 + f_0(0),
 \edm
 is the unique weakly non-degenerate generalized $H$-surface
such that $f$, $f_x$ and $f_y$ coincide respectively with $f_0$, $\frac{\dd f_0}{\dd x}$ and $v$ along the real interval $J$.
\end{theorem}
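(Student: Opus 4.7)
My plan is to apply Theorem \ref{thm1} to the explicitly constructed potential $\xiom$, verify that the resulting surface matches the prescribed Bj\"orling data along $J$, and then reduce any competing surface to the same potential via Theorem \ref{thm2} to obtain uniqueness.

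First I would check that $\xiom$ satisfies Definition \ref{singularholdef}. The reality conditions on $r$, $\tilde a$, $\tilde b$ are built into the formulas, and condition (A), non-vanishing of $a$, follows on a sufficiently small simply connected $\Sigma \supset J$ since $|a|^2 = \tfrac{H^2}{16}(s^2 + t^2)$ on $J$ and by hypothesis $s$ and $t$ never vanish simultaneously. Because the statement itself assumes $\phihat = \phiom\,\omega_1$ enters $\bigcell$, Theorem \ref{thm1} then applies and produces a real-analytic generalized $H$-surface $\stilde_1(\phiom)$ on $\Sigma_s$, with $J$ contained in the singular set.

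Next I would verify the Bj\"orling conditions along $J$. Since $\xiom$ lies in $Lie(\uu)$ on $J$, $\phiom$ takes values in $\uu$ along $J$, so there it coincides with its own singular frame, with $\hat B_\omega \equiv I$. At $\lambda = 1$, the calculation that produced the formulas $a = \tfrac{H}{4}(-s+it)$, $b = \tfrac{i}{2}Ht$ and $r = \tfrac{1}{2}(\theta_x + Ht)$ runs in reverse: the Maurer--Cartan form of $\phiom|_J$ at $\lambda = 1$ agrees with that of the coordinate frame $F_0$ from (\ref{singcoordframe}), and the common initial value $\phiom(0) = I = F_0(0)$, using $\theta(0) = -\pi/2$, forces $\phiom|_J = F_0$ at $\lambda = 1$. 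The identities (\ref{singframeeqns}) then specialize along $J$ to (\ref{fxandfy}), yielding $f_x|_J = \tfrac{\dd f_0}{\dd x}$ and $f_y|_J = v$. The additive constant $\tfrac{1}{2H}e_3 + f_0(0)$ is chosen precisely so that the base value $\sym_1(I) = -\tfrac{1}{2H}e_3$ is shifted to $f_0(0)$, and integration of $f_x$ along $J$ then gives $f|_J = f_0$.

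The main obstacle is uniqueness. Given any other weakly non-degenerate generalized $H$-surface $\tilde f$ with the same Bj\"orling data, I would apply Theorem \ref{thm2} at the origin to represent $\tilde f$ locally as $\stilde_1(\tilde\Psi_\omega)$ for some singular holomorphic potential $\tilde\xiom$ of the form (\ref{singularhol}). The Bj\"orling data determine $s$, $t$, $\theta$ and hence $F_0$, and repeating the matching of the previous paragraph forces the functions playing the roles of $a$, $b$, $r$ for $\tilde\xiom$ to agree with those of $\xiom$ along $J$; the prescribed $Lie(\uu)$-structure in (\ref{singularhol}) then propagates this agreement to every Fourier coefficient along $J$, and the identity principle delivers $\tilde\xiom = \xiom$ on $\Sigma$. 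The delicate step is pinning down the initial condition against the gauge freedom inherent in Theorem \ref{thm2}: one must verify that the normalization $\tilde f(0) = f_0(0)$, together with the coordinate choice placing $J$ on the real axis and forcing $F_0(0) = I$, yields $\tilde\Psi_\omega(0) = I$. Once this is established, $\tilde\Psi_\omega = \phiom$ on a neighborhood of $0$, hence $\tilde f = f$ there, and real-analytic continuation extends the equality to all of $\Sigma_s$.
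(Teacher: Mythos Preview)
Your existence argument is essentially the paper's: apply Theorem \ref{thm1} to the explicit potential and verify the boundary data via the singular frame equations (\ref{singframeeqns}). That part is fine.

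The uniqueness argument has a genuine gap. You claim that for a competing surface $\tilde f$, represented via Theorem \ref{thm2} by a singular frame $\tilde\Psi_\omega$, ``repeating the matching'' forces the coefficients $\tilde a$, $\tilde b$, $\tilde r$ to coincide with $a$, $b$, $r$ along $J$. But the matching in your existence paragraph rested on the identity $\phiom\big|_{J,\lambda=1} = F_0$, which was built into the \emph{construction}; for $\tilde\Psi_\omega$ there is no such identity a priori. The equations (\ref{singframe2}) for $\tilde f$ only tell you that the singular frame $G_\omega := \tilde\Psi_\omega\big|_J$ conjugates $\tilde f_x$ and $\tilde f_y$ to multiples of $(-e_2+e_3)$. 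Since the stabiliser in $SU_{1,1}$ of the null line $\real(-e_2+e_3)$ is two-dimensional, this does not pin down $G_\omega$, nor even the scalar relating $\Re A$ to $s$. So the step ``the Bj\"orling data determine $a,b,r$ for $\tilde\xi_\omega$'' fails as stated, and the issue is not merely the initial condition at $z=0$.

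The paper closes this gap in two moves. First, a diagonal gauge $\hat\Psi \mapsto \hat\Psi\,\textup{diag}(\beta^{-1},\beta)$ with $\beta$ the holomorphic extension of $\tfrac{1}{2}\sqrt{\kappa H}$ (where $\kappa = -s/\Re A = t/\Im A$) forces the new upper-right $\lambda^{-1}$ coefficient to equal $a$ on $J$; this kills the boost part of the stabiliser and leaves only a one-parameter family $\hat T(x,\lambda)$ commuting with the null matrix, so that $G'_\omega = F_\omega \hat T$ along $J$. Second, and crucially, the \emph{position} condition $\tilde f\big|_J = f_0$ is invoked: expanding $\sym_1(F_\omega \hat T)$ explicitly and equating it to $\sym_1(F_\omega)$ gives two scalar equations that force the parameter $R\big|_{\lambda=1}=0$, hence $G'_\omega\big|_J = F_0$. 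Only then does the comparison of Maurer--Cartan forms at $\lambda=1$ determine $b$ and $r$ along $J$ and yield $\tilde\xi_\omega = \xi_\omega$. Your outline uses $\tilde f(0)=f_0(0)$ only to fix a translation, but the paper needs the full equality $\tilde f\big|_J = f_0$ to eliminate the residual null-rotation freedom.
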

%_____________________________________________________
Uniqueness here is understood to mean that the two surfaces are both defined
and agree on some open subset of $\C$ containing the interval $J$.
We remark that a condition  that guarantees that $\phihat$ maps at least one point into the big cell is that $\frac{\dd^2 f_0}{\dd ^2 x}$ is not
parallel to  $\frac{\dd f_0}{\dd x}$ (that is, $\theta_x \neq 0$) at some point on $J$. 
%_____________________________________________________

\begin{proof}
By construction, and with the assumption that $\phihat^{-1}(\bigcell)$ is
non-empty,  $f$ is a generalized $H$-surface that has the 
required values along $J$, so we need to show uniqueness.

Suppose $\tilde f$ is another generalized $H$-surface satisfying the 
Bj\"orling data. It is necessarily weakly non-degenerate. 
By Theorem \ref{thm2}, there exists a standard
singular holomorphic potential $\tilde \xi_\omega$ and singular 
holomorphic frame $\hat \Psi_\omega$
such that $\stilde_1(\hat \Psi_\omega) =  \tilde f + \textup{translation}$.
No coordinate
change is necessary, since the condition that $\tilde f$ is not immersed along
$J$ implies that the holomorphic extended frame 
defining  $\tilde f$ already maps $J$ into
$\mathcal{P}_1$.
 
Let $\hat G_\omega$ be the singular frame  obtained by the Iwasawa
decomposition $\hat \Psi_\omega = \hat G_\omega \hat B  _\omega$, with 
$\hat B _\omega \in \uhat$. 
As shown in the proof of Theorem \ref{thm1}, the map 
$\tilde f$ satisfies, at points $z \in J$,
\beq    \label{tildeeqns}
\hat G_\omega^{-1}  \,\, \tilde f_{x} \,\,  \hat G_\omega  =  \frac{-4 \Re (A \lambda^{-2})}{H}
    \bbar i & -i \lambda  \\ i \lambda^{-1}  & -i \ebar, 
  \hspace{0.5cm}
 \hat G_\omega^{-1} \, \, \tilde f_{y} \, \, \hat G_\omega 
    =   \frac{4 \Im (A \lambda^{-2})}{H}
    \bbar i & -i \lambda  \\ i \lambda^{-1}  & -i \ebar,
 \eeq
where $\hat \Psi^{-1} \dd \hat \Psi = \bbar 0 & A \\ B & 0 \ebar \lambda^{-1} \dd z + o(\lambda)$, and $\hat \Psi := \hat \Psi_\omega \omega_1$.
On the other hand, we have, by assumption that $\tilde f_x$ and $\tilde f_y$ satisfy
the equations (\ref{fxandfy}), namely, along $J$,
\bdm
 \tilde f_x  =  s  F_0  \, (-e_2 + e_3) \, F_0^{-1},
\hspace{1cm}
 \tilde f_y  =  t  F_0  \, (-e_2 + e_3) \, F_0^{-1}.
\edm
We will first show that we can assume, without loss of generality, that
$\Re A = -\frac{Hs}{4}$ and  $\Im A = \frac{H t}{4}$ as follows:
comparing the equations above, it follows that, wherever $s\neq 0 \neq t$
we have
\bdm
\frac{t}{\Im A} = \frac{-s}{\Re A} =: \kappa.
\edm
At least one of $s(x)$ or $t(x)$ is non-zero at each $x \in J$, and so 
$\kappa: J \to \real$ is well defined and
non-vanishing.  Let $\beta$ be the holomorphic
extension of $\frac{\sqrt{\kappa H}}{2}$ to a simply connected open set
 $\mathcal{N} \subset \C$ which contains $J$. Set
\bdm
\hat \Psi^\prime := \hat \Psi \bbar \beta^{-1} & 0 \\ 0 & \beta \ebar.
\edm
Then $\stilde (\hat \Psi^\prime) = \stilde (\hat \Psi)$ because
the $\uu$ factor in the Iwasawa factorization is the same for both of these. So we can
replace $\hat \Psi$ by $\hat \Psi^\prime$ and we have
\bdm
(\hat \Psi^\prime)^{-1} \dd \hat \Psi^\prime = \bbar 0 & a \\  \beta^{-2}B & 0 \ebar \lambda^{-1} \dd z + o(\lambda),
\edm
where $a = \frac{H}{4}(-s + i t)$ on $J$. The new singular frame
$\hat G_\omega^\prime$, which is obtained from the factorization of
$\hat \Psi^\prime \omega_1^{-1} =: \hat \Psi^\prime_\omega = \hat G_\omega^\prime \hat B_\omega^\prime$ satisfies $\sym_1(\hat G_\omega^\prime) = \sym_1(\hat G_\omega) = \tilde f + \textup{translation}$,
 but the frame
now also satisfies, along $J$, the analogue of 
equations (\ref{tildeeqns}), replacing $A$ with $a=\frac{H}{4}(-s+it)$. But the frame
$\fomega$ constructed above for $f$ also satisfies the same equations.
This implies that 
\bdm
\fomega ^{-1} \, \hat G^\prime_\omega \big|_J = \hat T,
\edm
where $\hat T: J \to \uu_1$ commutes with the matrix {\small $\bbar i & -i \lambda  \\ i \lambda^{-1}  & -i \ebar$}.  A computation (using that all matrices are normalized
to $I$ at $z=0$), shows that $\hat T$ must be of the form
\bdm
\hat T = \bbar 1  -iR & i R \\ -iR & 1+iR \ebar, 
    \hspace{1cm} R: J \times \SSS^1 \to \real,
\edm
where $R$ depends  on the loop parameter $\lambda$. Now 
\beqas
\sym_1(\hat G^\prime_\omega) &=&  \sym_1 (\fomega \hat T)  \\
&=& \left. \frac{-1}{2H}\left(\fomega \hat T e_3 \hat T ^{-1} \fomega^{-1}
    + 2i \lambda (\frac{\partial}{\partial \lambda}\fomega) \fomega^{-1}
     + 2i \lambda \fomega (\frac{\partial}{\partial \lambda} \hat T) \hat T^{-1} \fomega^{-1} \right) \right|_{\lambda=1} \\
     &=& \sym_1(\fomega) - \left. \frac{1}{H}\left(\fomega
        \bbar  i R^2 &  R -  i R^2 \\
          R +  i R^2 & -  i R^2 \ebar \fomega^{-1}
         +  i \lambda \fomega \frac{\partial R}{\partial \lambda} 
           \bbar - i & i \\ - i & i \ebar \fomega^{-1} \right)\right|_{\lambda=1}.
 \eeqas
We can use the assumption that $\tilde f = f$ along $J$, that is,
 $\sym_1(\hat G^\prime_\omega) = \sym_1(\fomega) + \textup{translation}$,
along  $J$.  Since all maps are normalized to the identity at $z=0$, 
this translation is actually the zero vector. It follows from this and the
formula for $\sym_1(\hat G^\prime_\omega)$ that
\bdm
 \left.\left( \bbar  i R^2 &  R -  i R^2 \\
          R +  i R^2 & -  i R^2 \ebar 
         +  i \lambda  \frac{\partial R}{\partial \lambda} 
           \bbar - i & i \\ - i & i \ebar  \right) \right|_{\lambda=1}
           = 0.
\edm
This gives the pair of equations
\beqas
\left. \left( i R^2 + \frac{\partial R}{\partial \lambda}
     \right) \right|_{\lambda=1}  = 0, \hspace{1cm}
\left. \left( R - i R^2 - 
 \frac{\partial R}{\partial \lambda} \right) \right|_{\lambda=1} = 0.
\eeqas
Hence $R \big|_{\lambda=1} = 0$, that is,
\bdm
 G^\prime_\omega \big|_J = F_\omega \big|_J = F_0.
\edm
But we already saw, in the paragraphs preceding this theorem, that,
given that we know the value of $a$ along $J$, 
the singular frame $\fomega$ is then uniquely determined by its value $F_0$ along
$J$.  Hence $\hat G^\prime_\omega = \fomega$, and $\tilde f = f$.
\end{proof}
%______________________________________________________________

\subsection{Example}
Choose $I=\real$, and the singular curve to be the helix in $\LL^3$ given by
$f_0(x) = [\sin(x), \, -\cos(x), \,  x]^T$, 
$f_x = [\cos(x), \, \sin(x), \, 1]$ and $v(x)=f_x(x)$.
Then $\theta(x)= x$, $s=t=1$ along $\real$.
We have $a=\frac{H}{4}(-1+i)$, $b=\frac{1}{2}iH$ and $r=\frac{1}{2}(1+H)$.
The singular potential is
\beqas
\xiom &=& \frac{H}{4} \left\{ \bbar (1-i) \lambda^{-2} & 
            (-1+i)\lambda^{-1} \\
              (1-i) \lambda^{-3} +  2 i \lambda^{-1} & 
              -(1-i) \lambda^{-2} ) \ebar
        + \bbar 2i\left(1+\frac{1}{H}\right) & 0 \\ 0 & -2i \left(1+\frac{1}{H}\right) \ebar  \right. \\
 &&       + \left. \bbar -(1+i) \lambda^2 & -2 i \lambda + (1+i) \lambda^3 \\
              -(1+i) \lambda & (1+i) \lambda^2 \ebar \right\} \dd z.
\eeqas
The corresponding translated frame, $\phihat = \phiom \omega_1$ has, from 
equation (\ref{phiinvdphi}), standard potential:
\bdm
\phihat^{-1} \dd \phihat = \frac{H}{4} \bbar \frac{2 i}{H}  & (-1+i)\lambda^{-1} - 2i \lambda
    + (1+i) \lambda^3 \\    
      -4i \left(1+\frac{1}{H}\right) \lambda^{-1} & -\frac{2i}{H}  \ebar \dd z.
\edm

%***************************************************************************
%***************************************************************************

\section{Identifying singularity types via the Bj\"orling construction}
In this section we find the conditions on the Bj\"orling data for the surface
constructed to have a cuspidal edge, swallowtail or cuspidal cross cap singularity
in a neighbourhood of a singular point. 
 If one considers non-degenerate 
$H$-surfaces parameterized by germs of their Bj\"orling data at some point,
then one can see that these are the generic singularities within this class.
 However, see the comments in Section \ref{openquestions}.

We first show that every weakly non-degenerate
$H$-surface is a frontal, and then use the criteria in \cite{krsuy} and \cite{fsuy}
for a frontal to have these types of singularities.
Examples are illustrated in Figure \ref{figure2}.

%%%%%%%%%%%%%%%%%%%%%%%%%%%%%%%%%%%%%%%%%%%%%%%%%%%%%%%%
\subsection{The Euclidean normal to a generalized $H$-surface}  \label{normalsection} 
The commutators of our basis matrices satisfy $[e_1,e_2] = -2 e_3$,
$[e_2,e_3] = 2 e_1$, and  $[e_3,e_1] = 2 e_2$, and from this it follows that
the Euclidean cross-product 
on the vector space $\real^3$ corresponding to $\LL^3$ is given by
\bdm
A \times B = -\frac{1}{2} \Ad_{e_3} [ A, B],
\edm
where $\left[\, , \,\right]$ is the matrix commutator, and $Ad_X$ denotes
conjugation by $X$. Let $\left \| \, \cdot \, \right \|_E$ denote the 
standard Euclidean norm on $\real^3$. 

Let $f$ be a generalized 
$H$-surface with holomorphic frame $\phihat$. Since $f_x$ and $f_y$ are
parallel at singular points, the cross-product of these vanishes
there. 
Recall that the big cell is the union of two disjoint open sets,
$\bigcell = \bigcell^+ \cup \bigcell^-$.
 It turns out that one achieves continuity across the
singular set $C$ by defining, on $\Sigma^\circ$,
 the \emph{Euclidean (unit) normal} as follows:
\bdm
\enormal \,(z) := 
         \varepsilon  \frac{f_x \, \,  \times \,  f_y}{\left\| f_x \, \, \times \,  f_y \right\|_E} \, (z), \hspace{1cm} \varepsilon(z) = \pm 1, \, 
             \textup{ for } z \, \in \, \phihat^{-1}(\bigcell^\pm).
\edm
The two sets $\phihat^{-1}(\bigcell^\pm)$ are open and disjoint, so $\enormal$ is
a real analytic vector field on $\Sigma^\circ$.

%%%%%%%%%%%%%%%%%%%%%%%%%%
\begin{lemma} \label{normallemma}
Let $f: \Sigma_s \to \LL^3$ be a weakly non-degenerate generalized $H$-surface.
Then the Euclidean unit normal extends across $C = \phihat^{-1}(\mathcal{P}_1)$ to give a real analytic vector
field on $\Sigma_s$.  At a point $z_0 \in C$, if coordinates are chosen so that
the singular holomorphic frame
$\phiom$ defined in Theorem \ref{thm2} satisfies $\phiom(z_0) = I$,
then the Euclidean normal is given at $z_0$  by
\beq \label {eucnorm}
\enormal (z_0) = \frac{1}{\sqrt{2}} (e_2 + e_3).
\eeq
If $\fomega$ is the singular frame obtained from $\phiom$ then, at nearby singular values  $z\in C$, the Euclidean normal is the unit vector in the direction of
\beq \label{ntilde}
\widetilde \enormal =  \Ad_{e_3} F_\omega (-e_2 + e_3) F_\omega^{-1}.
\eeq
\end{lemma}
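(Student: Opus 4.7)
The plan is to derive a formula for $\enormal$ on the immersed locus $\Sigma^\circ$ which is manifestly $\varepsilon$-free, and then observe that this formula extends real analytically across $C$. First, on $\Sigma^\circ$, the coordinate frame $F$ satisfies $f_x = 2e^u F e_1 F^{-1}$ and $f_y = 2e^u F e_2 F^{-1}$, so the identity $[e_1, e_2] = -2 e_3$ together with $A \times B = -\tfrac{1}{2}\Ad_{e_3}[A,B]$ gives $f_x \times f_y = 4e^{2u}\,\Ad_{e_3}(N)$, where $N = F e_3 F^{-1}$ is the Lorentzian unit normal. Consequently
\bdm
\enormal = \varepsilon\,\frac{\Ad_{e_3}(N)}{\|\Ad_{e_3}(N)\|_E}.
\edm

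Next, I would re-express this using the singular frame $F_\omega$. From the proof of Lemma \ref{metriclemma}, $F = \varepsilon F_\omega X$ at $\lambda=1$, where $X$ is the explicit matrix from (\ref{explicitfact})--(\ref{explicituandv}), with $u = \varepsilon(\mu+\rho)\rho\,\bar v$ and $|v|^{-2} = \varepsilon(|\mu+\rho|^2\rho^2 - 1)$. A short matrix calculation then expresses $X e_3 X^{-1}$, as a vector in $\LL^3$, in the form $\varepsilon |v|^2 V_0$, with
\bdm
V_0 := 2\Im\bigl((\mu+\rho)\rho\bigr)\, e_1 - 2\Re\bigl((\mu+\rho)\rho\bigr)\, e_2 + \bigl(|\mu+\rho|^2\rho^2 + 1\bigr)\, e_3.
\edm
Crucially, $V_0$ is real analytic and, since $V_0 = 2(-e_2 + e_3)$ along $J$ (where $\rho=1$, $\mu=0$), non-vanishing on a neighbourhood of $z_0$; the $\varepsilon$-ambiguity and the blow-up of $X$ are confined entirely to the scalar prefactor.

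Finally, substituting $N = F_\omega X e_3 X^{-1} F_\omega^{-1} = \varepsilon|v|^2\, F_\omega V_0 F_\omega^{-1}$ into the formula for $\enormal$, and using that $\Ad_{e_3}$ acts on $\LL^3 \cong \real^3$ as the reflection $(a,b,c)\mapsto(-a,-b,c)$ and is therefore a Euclidean isometry, the outer $\varepsilon$ combines with $\varepsilon|v|^2$ in the numerator and $|v|^2$ in the denominator to give
\bdm
\enormal = \frac{\Ad_{e_3}\bigl(F_\omega V_0 F_\omega^{-1}\bigr)}{\|F_\omega V_0 F_\omega^{-1}\|_E},
\edm
which is manifestly real analytic on a neighbourhood of $z_0$, establishing the extension. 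Evaluating at $z_0$, where $F_\omega = I$ and $V_0 = 2(-e_2+e_3)$, yields $\enormal(z_0) = \Ad_{e_3}(-e_2+e_3)/\sqrt{2} = (e_2+e_3)/\sqrt{2}$, and at nearby singular points of $J$ (where $V_0 = 2(-e_2+e_3)$ still holds) gives $\widetilde{\enormal} = \Ad_{e_3}\bigl(F_\omega(-e_2+e_3)F_\omega^{-1}\bigr)$ as the unnormalized normal, matching (\ref{eucnorm}) and (\ref{ntilde}).

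The main obstacle is precisely the bookkeeping of this cancellation: both $\Ad_{e_3}(N)$ (which diverges) and the prefactor $\varepsilon$ (which switches sign across $C$) are individually ill-behaved near singular points, and one must factor them together before normalizing. The key technical observation that makes the argument work is that the product $\varepsilon\, X e_3 X^{-1}/|v|^2$ is itself a real analytic, non-vanishing vector-valued function, so that after cancellation the only remaining dependence on $F_\omega$ is through a smooth conjugation.
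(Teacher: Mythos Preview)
Your argument is correct and follows essentially the same route as the paper's proof: both use the switching factorization $F = \varepsilon F_\omega X$ from Lemma~\ref{switchlemma} to rewrite $N = F e_3 F^{-1}$ in terms of $F_\omega$ and the real analytic data $\mu,\rho$, and then observe that the $\varepsilon$-ambiguity and the blow-up cancel, leaving a real analytic unit vector field. Your presentation is slightly more streamlined---you factor out the singular scalar $\varepsilon|v|^2$ before normalizing, obtaining the clean vector $V_0$, whereas the paper normalizes first and tracks the limits via auxiliary functions $Z_1,Z_2$---but the substance is the same.
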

%----------------

\begin{proof}
On a neighbourhood, $\Omega \subset \Sigma$, of $z_0 \in C$ we can 
assume  by Theorem \ref{thm2} that $f$ is 
defined by a standard singular holomorphic frame $\phiom$ with $\phiom(z_0)=I$,
 with coordinates such that $z_0 = 0$, and  that there is an interval
$J = \Omega \cap \real$ containing $0$ such that $J \subset C$.
 On an open dense subset, $\Omega^\circ = \Omega \cap \Sigma^\circ$,
  of $\Omega$, we can Iwasawa factorize the
standard holomorphic frame $\phihat = \phiom \, \omega_1$ as
 $\phihat = \hat F \hat B$,
with $\hat F \in \uu$, $\hat B \in \uhat$. Now we have, 
\beqas
f_x &=& |f_x| \, F_C e_1 F_C^{-1}, \hspace{1cm} 
   f_y = |f_y| \, F_C e_2 F_C^{-1}\\
 &=& |f_x| \, F D e_1 D^{-1} F^{-1}, \hspace{1cm} 
   f_y = |f_y| \, F D e_2 D^{-1}F^{-1}
\eeqas
where $F_C$ and $D$ are given at equation (\ref{coordframe}),
and so $\enormal$ points in the direction of 
\beqas
\widetilde X &=& \varepsilon \left( F D e_1 D^{-1}F^{-1} \right)  \times   \left( FD e_2 D^{-1}F^{-1} \right)\\
&=&   \varepsilon \Ad_{e_3} (F\,  e_3 \, F^{-1}).
\eeqas
As in the proof of Lemma \ref{metriclemma},
  by Lemma \ref{switchlemma}, we have
 \beqas
\phihat &= &\hat F \hat B = \phiom \omega_1 = \fomega \hat B_\omega \omega_1\\
 &=& \fomega \hat K \bhat ^\prime,
\eeqas
where $\fhat = \varepsilon \fomega \hat K$, $~$
$\hat B_\omega = \bbar \rho & 0 \\ 0 & \rho^{-1} \ebar + \bbar 0 & \mu   \\ \nu & 0 \ebar \lambda + o(\lambda^2)$, and
$\rho: \Omega \to \real_+$, and $\mu$ 
and $\nu$ are $\C$-valued.
We also have
\bdm
\hat B_\omega \big|_J = I, \hspace{1cm}  \fomega(0) = I.
\edm
On $\Omega^\circ$ we can write
\bdm
\widetilde X = \varepsilon \Ad_{e_3} \left(F_\omega K e_3 K^{-1} F_\omega^{-1}\right).
\edm
Since $F_\omega$ is real analytic on the whole of $\Omega$, we only need
to analyze  $\widetilde Y:= \varepsilon K e_3 K^{-1}$. According to Lemma \ref{switchlemma},
we can choose $\hat K$ as
\beqas
\hat K = \bbar u & v \lambda \\ \varepsilon \bar v \lambda^{-1} & \varepsilon \bar u \ebar,\\
v = \frac{1}{\sqrt{\varepsilon h}}, \hspace{1cm} u = \varepsilon(\mu + \rho)\rho \bar v,\\
h := \left|\mu + \rho \right|^2 \left| \rho \right|^2 -1, \hspace{1cm} \left|u\right|^2-\left|v\right|^2 = \varepsilon.
\eeqas
Then 
\bdm
\widetilde Y = \varepsilon \bbar i \varepsilon (u \bar u + v \bar v) & -2i u v \\
            2i \bar u \bar v & -i \varepsilon(u \bar u + v \bar v) \ebar,
\edm
and
\beqas
\left\| \widetilde Y \right\|_E ^2 &=& (|u|^2 + |v|^2)^2 + 4|u|^2|v|^2   \\
 &=& (\varepsilon + 2 |v|^2)^2 + 4(\varepsilon + |v|^2)|v|^2\\
 &=&  1 + \frac{8}{h}\left(1  + \frac{1}{h}\right). 
\eeqas
The unit vector in the direction of $\widetilde Y$ is
\beqa
Y &=&  \left(1+ 8 h^{-1}\left(1  + h^{-1} \right)\right)^{-\frac{1}{2}} \widetilde Y  \nonumber\\
 &=& i \bbar Z_1 + Z_2 & -(\mu + \rho) \rho Z_1  \\
    (\bar \mu + \rho) \rho Z_1 & - Z_1 - Z_2 \ebar, \label{yhat}
\eeqa
where 
\beqa
Z_1 := \varepsilon 2 h^{-1} (1+ 8 h^{-1}(1  + h^{-1})^{-\frac{1}{2}}, ~
& ~\lim_{h \to 0} Z_1 = \frac{1}{\sqrt{2}}, 
~& ~
\lim_{h \to 0} \frac{\partial Z_1}{\partial y} = - \frac{1}{2\sqrt{2}}h_y, \label{zeqn1}\\
Z_2 := \varepsilon (1+ 8 h^{-1}(1  + h^{-1})^{-\frac{1}{2}},
~ & ~ \lim_{h \to 0} Z_2 = 0, ~ & ~
\lim_{h \to 0} \frac{\partial Z_2}{\partial y} =  \frac{1}{2\sqrt{2}}h_y. 
 \label{zeqn2}
\eeqa
Thus $Y$ is a well-defined real analytic vector field which, for real values of $z$, that
is when $h= \mu=0$ and $\rho = 1$, has the value
\bdm
Y(x) = \frac{1}{\sqrt{2}} \, (-e_2 +e_3).
\edm
Substituting this for $\varepsilon K e_3 K^{-1}$ in the expression for 
$\widetilde X$ above, gives the
stated formulae
for $\enormal(z_0)$ and $\widetilde \enormal(x)$.
\end{proof}
%_______________________________________________________________

\begin{lemma} \label{enormalderlemma}
 Let $f$ be a generalized $H$-surface constructed from the Bj\"orling
data in Theorem \ref{bjorling}.  At $z=0$, the derivative $\dd \enormal$ of the
Euclidean unit normal is given by
\beq \label{enormalder}
 \dd \enormal  = -\frac{\theta_x}{\sqrt{2}} e_1 \dd x
   - \frac{Ht}{2 \sqrt{2}} (-e_2 + e_3) \dd y.
\eeq
\end{lemma}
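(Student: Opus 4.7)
The plan is to compute $\partial_x \enormal|_0$ and $\partial_y \enormal|_0$ separately, exploiting the two descriptions of $\enormal$ provided by Lemma \ref{normallemma}: the closed form on the singular curve $J$ (which delivers the $\dd x$ term), and the global formula $\enormal = \Ad_{e_3}(M/\|M\|_E)$ with $M := \fomega Y \fomega^{-1}$ and $Y$ given by \eqref{yhat} (which delivers the $\dd y$ term).

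For the $\dd x$ component I use that along $J$ the singular frame $\fomega|_J$ coincides with the diagonal rotation $F_0$ of \eqref{singcoordframe}. Since $F_0$ is the exponential of a real multiple of $e_3$, the rotation formula gives $\Ad_{F_0}(-e_2+e_3) = \cos\theta\,e_1 + \sin\theta\,e_2 + e_3$, so by \eqref{ntilde},
\[
\widetilde\enormal(x) = -\cos\theta\,e_1 - \sin\theta\,e_2 + e_3,
\]
a vector of constant Euclidean length $\sqrt{2}$. Hence $\enormal(x,0) = \frac{1}{\sqrt{2}}(-\cos\theta\,e_1 - \sin\theta\,e_2 + e_3)$, and differentiating at $x=0$ with $\theta(0) = -\pi/2$ yields $\partial_x\enormal|_0 = -\frac{\theta_x}{\sqrt{2}}\,e_1$.

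For the $\dd y$ component, at $z=0$ we have $\fomega(0) = I$, $M(0) = Y(0) = \frac{1}{\sqrt{2}}(-e_2+e_3)$, and $\|M(0)\|_E = 1$, so
\[
\partial_y\enormal|_0 = \Ad_{e_3}\bigl(\partial_y M|_0 - \langle M(0), \partial_y M|_0\rangle_E\,M(0)\bigr),
\]
with $\partial_y M|_0 = [\partial_y \fomega|_0, Y(0)] + \partial_y Y|_0$. I extract $\partial_y \fomega|_0$ from $F_\omega^{-1}\partial_y F_\omega|_0 = i(\hat U_\omega - \hat V_\omega)|_{\lambda=1,\,z=0}$ via \eqref{uhatandvhat}, after substituting the Bj\"orling values $a = \frac{H}{4}(-s+it)$, $b = \frac{iHt}{2}$, $r = \frac{1}{2}(\theta_x + Ht)$. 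I compute $\partial_y Y|_0$ from \eqref{yhat} by the chain rule, using $\mu_y = 2i\bar b$ and $\rho_y = -r$ from the proof of Theorem \ref{thm1}, the limits \eqref{zeqn1}--\eqref{zeqn2} for $Z_1$ and $Z_2$, and the relation $h_y|_0 = -4r + 4\Im b = -2\theta_x$. Combining these and applying $\Ad_{e_3}$ produces the claimed $\dd y$ coefficient $-\frac{Ht}{2\sqrt{2}}(-e_2+e_3)$.

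The main obstacle is the algebraic bookkeeping in the $\partial_y$ step: the contributions from $[\partial_y \fomega|_0, Y(0)]$, from $\partial_y Y|_0$, and from the normalization correction each carry several terms in $s$, $t$, and $\theta_x$, and they recombine through nontrivial cancellations. A useful internal consistency check is that $\|\enormal\|_E \equiv 1$ forces $\partial_y \enormal|_0$ to be Euclidean-orthogonal to $\enormal(0) = \frac{1}{\sqrt{2}}(e_2 + e_3)$, which is automatic for any multiple of $-e_2 + e_3$.
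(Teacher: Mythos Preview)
Your proposal is correct and follows essentially the same line as the paper's proof. The paper treats both directions uniformly via $\enormal = \beta X$ with $X = \Ad_{e_3}(F_\omega Y F_\omega^{-1})$, computing $X_x$ and $X_y$ through the commutator $[F_\omega^{-1}\dd F_\omega, Y]$ plus $\dd Y$, and then projecting off the $\enormal$-component; your $\partial_y$ computation is exactly this. For $\partial_x$ you take a small shortcut by differentiating the explicit closed form of $\enormal$ along $J$ coming from \eqref{ntilde}, which avoids the commutator step but is equivalent, since along $J$ one has $F_\omega = F_0$ and $Y = \tfrac{1}{\sqrt 2}(-e_2+e_3)$ independent of $x$.
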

%%%%%%%%%%%%%%%%%

\begin{proof}
We showed in  the previous lemma that $\enormal = \beta X$, for some real-valued
function $\beta$ and $X = \Ad_{e_3} (F_\omega Y F_\omega^{-1})$, where
$Y$ is given by equation (\ref{yhat}). We also have that $X(0) = \enormal(0)$, 
which means that $\beta(0) = 1$. 
Now $\left<\enormal, \dd \enormal \right>_E = 0$, and $X$ is parallel to $\enormal$,
 so it follows that
\beqas
\dd \enormal &=& \dd \beta X + \beta \dd  X \\
 &=&  \beta \left( \dd X - \left< \dd X , \enormal \right>_E \enormal \right),
\eeqas
 and we need to compute 
\bdm
\dd X = \Ad_{e_3}\left(F_\omega \left[F_\omega^{-1} \dd F_\omega \, , \, Y\right] F_\omega^{-1}\right)
+ \Ad_{e_3} \left( F_\omega  \,  \dd Y \, F_\omega^{-1}\right).
\edm
At $z=0$, we have, using $U_\omega$ and $V_\omega$ from (\ref{uhatandvhat}),
\bdm
F_\omega^{-1} (F_\omega)_x  = \frac{i \theta_x}{2} \bbar 1 & 0 \\ 0 & -1\ebar, \hspace{1cm}
F_\omega^{-1} (F_\omega)_y = i (U_\omega -  V_\omega) = 
   \bbar \frac{H s}{2}i & - \frac{Ht}{2} - \frac{Hs}{2}i \\
      - \frac{Ht}{2} + \frac{Hs}{2}i & - \frac{Hs}{2}i \ebar,
\edm
and, by the formulae $h_y=-4(r-\Im b)$, $ \mu_y = 2i \bar b$
and $\rho_y = -r$  from the proof of Theorem \ref{thm1},
\beqas
h= 0, &  h_x =0, & h_y =  -2 \theta_x,\\
\mu = 0, &  \mu_x=0, & \mu_y  =  Ht, \\
\rho=1, & \rho_x=0, & \rho_y =  -\frac{1}{2}(\theta_x + Ht).
\eeqas
Using these and the formulae (\ref{yhat})-(\ref{zeqn2}) one obtains, at $z=0$,
\bdm
X_x = -\frac{\theta_x}{\sqrt{2}} e_1,
  \hspace{1cm}
  X_y = -\frac{Ht}{\sqrt{2}} e_3.
\edm
Together with the value $\enormal = \frac{1}{\sqrt{2}} (e_2 + e_3)$ at $z=0$, and $\beta(0)=1$,
this gives the expression (\ref{enormalder}) for   $\beta \left( \dd X - \left< \dd X , \enormal \right>_E \enormal \right) \big|_{z=0}$.
\end{proof}
%________________________________________________________________

%________________________________________________________________

%%%%%%%%%%%%%%%%%%%%%%%%%%%%%%%%%%%%%%%%%%%%%%%%%%%%
%%%%%%%%%%%%%%%%%%%%%%%%%%%%%%%%%%5
\begin{lemma}  \label{psiderivativelemma}
Let $f: \Sigma_s \to \LL^3$ be a generalized $H$-surface constructed by the
data in Theorem \ref{thm1}, with $\phiom(0) = \phihat(0) \omega_1^{-1} = I$. 
Set $s_0:= -\frac{4 \Re a(0)}{H}$ and $t_0 := \frac{4 \Im a(0)}{H}$ so that
\bdm
f_x = s_0 \, (-e_2 + e_3),
\hspace{1cm}
f_y = t_0  \, (-e_2 + e_3),
\edm 
Let $\psi: \Sigma_s \to \real$
be defined by 
\bdm
\psi = \varepsilon \|f_x \times f_y \|_E,
\edm
 where 
$\varepsilon(z)  = \pm1$, for $z \in \Phi^{-1}(\bigcell^\pm)$. Then at $z= 0$,
\beq
\dd \psi = \frac{16 |a|(\Im b - r)}{H} \sqrt{\frac{t_0^2 + s_0^2}{2}} \, \dd y.
\eeq
In particular, $\dd \psi(0) = 0 \Leftrightarrow \Im b(0) - r(0) = 0$.
\end{lemma}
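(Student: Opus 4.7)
The strategy is to avoid Taylor expanding $f_x\times f_y$ at the singular point, and instead to reduce $\psi$ algebraically to the Lorentz conformal factor $g$ from Lemma~\ref{metriclemma}, which already carries all the relevant information about the singular curve.

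For any two vectors $v,w\in \LL^3 =\real^3$ one has $\langle v,w\rangle_E=\langle v,w\rangle_L+2v_3w_3$, since the Euclidean and Lorentzian inner products differ only in the sign of the timelike component. By Lemma~\ref{metriclemma} the coordinates are Lorentz-conformal, with $\langle f_x,f_x\rangle_L=\langle f_y,f_y\rangle_L=4g^2$ and $\langle f_x,f_y\rangle_L=0$. Setting $p:=(f_x)_3$ and $q:=(f_y)_3$, this gives
\[
\|f_x\|_E^2=4g^2+2p^2, \qquad \|f_y\|_E^2=4g^2+2q^2, \qquad \langle f_x,f_y\rangle_E=2pq.
\]
Applying Lagrange's identity $\|f_x\times f_y\|_E^2=\|f_x\|_E^2\|f_y\|_E^2-\langle f_x,f_y\rangle_E^2$, the cross terms $4p^2q^2$ cancel and
\[
\|f_x\times f_y\|_E^2 = 8g^2\bigl(2g^2+p^2+q^2\bigr).
\]

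Next, Lemma~\ref{metriclemma} writes $g=\varepsilon\,\chi^2|a|/H$ with $\chi^2|a|/H>0$ on $\Sigma^\circ$, so $\operatorname{sign}(g)=\varepsilon$ and $\varepsilon|g|=g$. Taking the signed square root yields
\[
\psi=2\sqrt{2}\,g\,\sqrt{2g^2+p^2+q^2},
\]
valid on $\Sigma_s$ (both sides vanish on the singular set $C$). At $z=0$ we have $g(0)=0$, while $p(0)^2+q(0)^2=s_0^2+t_0^2=16|a(0)|^2/H^2>0$ by the non-vanishing of $a$ imposed in Theorem~\ref{thm1}, so the square-root factor is real analytic and positive in a neighbourhood of the origin.

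Applying the product rule at $z=0$, the term carrying $g(0)=0$ drops out and
\[
\dd\psi|_0 = 2\sqrt{2}\,\dd g|_0\,\sqrt{s_0^2+t_0^2}.
\]
Substituting $g_x(0)=0$ and $g_y(0)=4|a|(\Im b-r)/H$ from Lemma~\ref{metriclemma}, and rewriting $\sqrt{s_0^2+t_0^2}=\sqrt{2}\,\sqrt{(t_0^2+s_0^2)/2}$, reproduces exactly the stated formula. The ``in particular'' claim is immediate: the prefactor $\frac{16|a|}{H}\sqrt{(t_0^2+s_0^2)/2}$ is non-zero at $0$, so $\dd\psi(0)=0$ iff $\Im b(0)-r(0)=0$. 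The only mild subtlety is that Lemma~\ref{metriclemma} explicitly gives $g$ only $C^1$ across $C$; but $C^1$ regularity of $g$ together with real analyticity of the (positive) square-root factor is enough to apply the product rule at the singular point, which is all the argument needs.
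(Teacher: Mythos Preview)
Your argument is correct, and it takes a genuinely different route from the paper's proof. Both proofs follow the same overall strategy: factor $\psi$ as $g$ times a factor which is smooth and non-vanishing at the singular point, then apply the product rule together with the values of $g$ and $\dd g$ from Lemma~\ref{metriclemma}. The difference lies in how the second factor is identified. The paper works through the coordinate frame $\hat F_C$ obtained from the Iwasawa splitting of $\phihat$: it writes $\psi = 4g\,\Gamma$ with $\Gamma = \varepsilon g\,\|N\|_E$, where $N$ is the Lorentz unit normal; since $F_C$ and $N$ blow up as $z\to\real$, one must analyze limits such as $\varepsilon g A^2$, $\varepsilon g B^2$ (for the entries of $F_C$) to show that $\Gamma$ and $\dd\Gamma$ remain finite. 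Your approach bypasses the blowing-up frame entirely via the elementary identity $\langle v,w\rangle_E=\langle v,w\rangle_L+2v_3w_3$ and Lagrange's identity, producing the factorization $\psi=2\sqrt{2}\,g\,\sqrt{2g^2+p^2+q^2}$ in which the square-root factor is manifestly real analytic near $0$, since $p=(f_x)_3$ and $q=(f_y)_3$ are components of the real-analytic map $\dd f$ and $p(0)^2+q(0)^2=s_0^2+t_0^2>0$. This is shorter and conceptually cleaner: the regularity of the bounded factor is immediate rather than established through a separate limit computation. The paper's route, on the other hand, makes explicit the connection with the Lorentzian normal $N$, which ties in more visibly with the frame-based machinery used elsewhere.
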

%%%%%--------------------
\begin{proof}
At points away from  the real line, we have the decomposition 
$\phihat = \fhat \bhat$, and the coordinate frame found in Lemma
\ref{metriclemma} is:
$\fhat_C := \fhat  D$, with 
  $D = \textup{diag} \left(e^{i(\frac{\phi}{2} + \frac{\pi}{4})}, \,\, e^{-i(\frac{\phi}{2} + \frac{\pi}{4})} \right)$,
   and $a = |a|e^{i\phi}$.
The metric is given by $\dd s^2 = 4 g^2 \,\, (\dd x^2 + \dd y^2)$, with
$g  =  \varepsilon \frac{\chi^2 |a|}{H}$ and
$\chi = \sqrt{\left| |\mu + \rho|^2 - \rho^{-2}\right|}$.
And we have:
\beq  \label{fxfyn}
f_x = 2 \varepsilon g F_C e_1 F_C^{-1}, \hspace{1cm} 
f_y = 2 \varepsilon g F_C e_2 F_C^{-1}, \hspace{1cm} 
N = F_C e_3 F_C^{-1},
\eeq
where $N$ is the Lorentzian unit normal. Now 
\beqas
f_x \times f_y &=& -\frac{1}{2} \Ad_{e_3} [ f_x , f_y] \\
 &=& 4 g^2 \Ad_{e_3} F_C e_3 F_C^{-1},
\eeqas
so we can write $\psi = \varepsilon \|f_x \times f_y\|_E$ as
\bdm
\psi = 4 g \, \Gamma, \hspace{1cm} \Gamma:= \varepsilon g \, \| N \|_E.
\edm
Although $g \to 0$ and $\| N\|_E \to \infty$ as $z \to \real$, we can 
get an explicit expression for the product $\Gamma$. 
Writing 
$F_C = {\tiny \bbar A & B \\ \varepsilon \bar B & \varepsilon \bar A \ebar}$, the 
equations (\ref{fxfyn}) then imply that, as $z \to 0$, we have the finite limits:
\beqas
g \, \Im(A\bar B) \to -\frac{s_0}{4}, \hspace{1cm}
\varepsilon g\,(A^2-B^2) \to -i\frac{s_0}{2},\\
 g \, \Re(A \bar B) \to -\frac{t_0}{4}, \hspace{1cm}
\varepsilon g\,(A^2+B^2) \to -\frac{t_0}{2},
\eeqas
which imply
\bdm
\varepsilon g  A^2 \to -\frac{1}{4}(t_0 + i s_0), \hspace{1cm}
\varepsilon g B^2 \to \frac{1}{4}(-t_0 + i s_0).
\edm
Now 
\bdm
N = i  \bbar \varepsilon(|A|^2 + |B|^2) & - 2 A B \\
     2 \bar A \bar B & - \varepsilon(|A|^2 + |B|^2 \ebar,
\edm
so 
\beqas 
\Gamma &=&  \varepsilon g \|N\|_E =  \varepsilon g \left( (|A|^2 + |B|^2)^2 + 4|A|^2|B|^2 \right)^{\frac{1}{2}}  \\
&=& \left( g^2(|A|^4 + |B|^4 + 6|A|^2|B|^2 \right)^{\frac{1}{2}},
\eeqas
 \bdm
\lim_{z\to 0} \Gamma 
  =\sqrt{\frac{t_0^2 + s_0^2}{2}}.
 \edm
This limit is non-zero because $a$ is non-vanishing.

Similarly, the terms $\varepsilon g A^2$ and $\varepsilon g B^2$ 
also have well defined derivatives as $z \to \real$, following from the second derivatives
of $f$.  Since  $\varepsilon g A^2$ and $\varepsilon g B^2$ are non-zero at $z=0$, their absolute values are also differentiable there.
 Hence the derivative
$\dd \Gamma$ has a well defined finite limit as $z \to 0 \in \real$.

 Returning to $\psi = 4 g \Gamma$,
we have
\bdm
\dd \psi(0) = \lim_{z\to 0} \left( 4 \dd g \, \Gamma + 4g \,\dd \Gamma \right).
\edm
 Lemma \ref{metriclemma} informs us that 
$\lim_{z \to 0} g = 0$ and 
$\lim_{z \to 0} 
\frac{\partial g}{\partial y} = \frac{4 |a|(\Im b - r)}{H}$,
from which the claim of the lemma follow.
\end{proof}

%%%%%%%%%%%%%%%%%%%%%%%%%%%%%%%%%%%%%%%%%%%%%%%%%%%%%%%%%%
%**********************************************************
\subsection{Frontals and fronts}

Let $U$ be a domain of $\real^2$.  A map $f: U \to \EEE^3$, into the three-dimensional
Euclidean space, is called a \emph{frontal} if there exists a unit vector field $\enormal: U \to \SSS^2$, such that $\enormal$ is perpendicular to $f_*(TU)$ in $\EEE^3$.   The map
$L = (f, \, \enormal): U \to \EEE^3 \times \SSS^2$ is called a 
Legendrian lift of $f$. If $L$ is an immersion, then $f$ is called a \emph{front}.  
A point $p \in U$ where a frontal $f$ is not an immersion is called 
a \emph{singular point} of $f$.

Suppose that the restriction of a frontal $f$, to some open dense set,
  is an immersion, and for some given  Legendrian lift $L$ of  
$f$, there exists a smooth function $\psi:U \to \real$
 such that, in local coordinates $(x,y)$,
\bdm
f_x \times f_y = \psi \enormal.
\edm
 Then a singular point $p$ is called \emph{non-degenerate}
if $\dd \psi$ does not vanish there.   In this situation, the frontal $f$ is called non-degenerate if every singular point is non-degenerate.

%_____________________________________________________
\begin{lemma}
Let $f: \Sigma_s \to \LL^3$ be a weakly non-degenerate generalized $H$-surface.
Let $\enormal$ denote the Euclidean unit normal defined in Section \ref{normalsection}.
Let $\EEE^3$ denote the vector space $\LL^3$ with the standard Euclidean inner product
$\left< \, \cdot \, \right>_E$. Then the map $f: \Sigma_s \to \EEE^3$, together with
the Legendrian lift $L = (f, \enormal): \Sigma \to \EEE^3 \times \SSS^2$,
defines a frontal.  The surface is non-degenerate as an $H$-surface, in accordance with
Definition \ref{degendef}, if and only if it is non-degenerate as a frontal.
\end{lemma}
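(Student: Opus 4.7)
The plan is to dispatch the two assertions separately, each as a short corollary of earlier lemmas in this section. For the frontal structure, I would appeal to Lemma \ref{normallemma}, which already extends $\enormal$ real-analytically from $\Sigma^\circ$ to all of $\Sigma_s$ as a unit vector field in $\real^3$. What remains is to verify Euclidean perpendicularity to $f_*(T\Sigma_s)$ on the whole domain. On the regular part $\Sigma^\circ$, $\enormal$ is defined as a sign times the normalized Euclidean cross product of $f_x$ and $f_y$, and so $\langle \enormal, f_x\rangle_E = \langle \enormal, f_y\rangle_E = 0$ is automatic. Since $\enormal$, $f_x$ and $f_y$ are all real analytic on $\Sigma_s$ and $\Sigma^\circ$ is open and dense, these two inner product functions vanish identically on $\Sigma_s$ by analytic continuation. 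Thus $L = (f, \enormal)$ is a smooth Legendrian lift and $f$ is a frontal.

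For the equivalence of the two non-degeneracy notions at a singular point $z_0 \in C$, I would first identify the function $\psi$ that arises in the frontal definition, characterized by $f_x \times f_y = \psi \enormal$, with the function studied in Lemma \ref{psiderivativelemma}, namely $\psi = \varepsilon \|f_x \times f_y\|_E$. On $\Sigma^\circ$ these agree by direct substitution into the definition of $\enormal$ (using $\varepsilon^2 = 1$), and by real analyticity they agree on all of $\Sigma_s$. Next, by Theorem \ref{thm2}, on a neighbourhood of $z_0$ I may assume that $f$ is represented by a standard singular holomorphic frame $\phiom$ with $\phiom(z_0) = I$, in conformal coordinates for which $J = \Sigma \cap \real$ is contained in the singular set and $z_0 = 0$.

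It then suffices to compare the two differentials at $z_0$. The $H$-surface non-degeneracy condition of Definition \ref{degendef} is $\dd h \neq 0$, where $h = |\mu+\rho|^2|\rho|^2 - 1$; from the calculation in the proof of Theorem \ref{thm1} one has $h_x(0) = 0$ and $h_y(0) = -4(r - \Im b)(0)$, so $\dd h(0) \neq 0$ iff $(\Im b - r)(0) \neq 0$. On the other hand, Lemma \ref{psiderivativelemma} states precisely that $\dd \psi(0)$ is a scalar multiple of $\dd y$ whose coefficient is a non-zero constant times $(\Im b - r)(0)$, so $\dd \psi(0) \neq 0$ iff $(\Im b - r)(0) \neq 0$. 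The two conditions coincide, and the lemma follows.

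There is no substantive technical obstacle, since the hard computations have been carried out in Lemma \ref{normallemma}, Lemma \ref{psiderivativelemma}, and the proof of Theorem \ref{thm1}. The only mild care required is the sign bookkeeping that matches the frontal-theoretic $\psi$ to the $\psi$ in Lemma \ref{psiderivativelemma}, and the use of Theorem \ref{thm2} to pass to a local representation by a standard singular holomorphic frame in a neighbourhood of each singular point.
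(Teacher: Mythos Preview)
Your proposal is correct and follows essentially the same approach as the paper's proof: invoke Lemma \ref{normallemma} to obtain a real analytic $\enormal$ and hence a Legendrian lift, then identify the frontal $\psi$ with the signed norm $\varepsilon\|f_x\times f_y\|_E$ of Lemma \ref{psiderivativelemma} and use that lemma together with the computation of $\dd h$ from Theorem \ref{thm1} to reduce both non-degeneracy conditions to $(\Im b - r)(z_0)\neq 0$. You supply slightly more detail than the paper (the explicit perpendicularity argument via analytic continuation and the explicit invocation of Theorem \ref{thm2} to localize), but the logic is the same.
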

\begin{proof}
By Lemma \ref{normallemma}, the map $\enormal: \Sigma_s \to \SSS^2$ is well defined and
real analytic, and so  $L = (f,\enormal)$ is a real analytic Legendrian lift
of $f$; in particular, $f$ is a frontal.   Regarding degenerate points, the map
$\psi$  above is the signed Euclidean norm $\varepsilon \| f_x \times f_y \|_E$, 
discussed in Lemma \ref{psiderivativelemma}, and we showed there that $\dd \psi$
vanishes at a singular point if and only $\Im b-r$ does. The latter expression is,
according to Theorem \ref{thm1}, the derivative of the function $h$, which was 
used previously to define degeneracy.
\end{proof}

%_________________________________________________________

\begin{lemma} \label{frontcondlemma}
Let $f$ be a non-degenerate generalized $H$-surface constructed from the
Bj\"orling data in Theorem \ref{bjorling}. Then $f$ is a front on a neighbourhood
of $z=0$ if and only if 
\bdm
t(0) \neq 0.
\edm

\end{lemma}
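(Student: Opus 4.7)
The plan is to argue directly from the definition: $f$ is a front on a neighbourhood of $z=0$ if and only if the Legendrian lift $L=(f,\enormal):\Sigma_s \to \EEE^3\times \SSS^2$ is an immersion on a neighbourhood of $z=0$. Because the set of immersive points is open, and all maps involved are real analytic, this reduces to checking that the $6\times 2$ Jacobian of $L$ has rank $2$ at the single point $z=0$.

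First I would write down the four relevant vectors at $z=0$. From the Bj\"orling normalization $\theta(0)=-\pi/2$ and the parameterization of $f_x$, $f_y$ preceding Theorem \ref{bjorling}, the tangent part is
\[
f_x(0) \;=\; s(0)\,(-e_2+e_3), \qquad f_y(0) \;=\; t(0)\,(-e_2+e_3).
\]
By Lemma \ref{enormalderlemma}, the normal part is
\[
\enormal_x(0) \;=\; -\frac{\theta_x(0)}{\sqrt{2}}\,e_1, \qquad \enormal_y(0) \;=\; -\frac{H\,t(0)}{2\sqrt{2}}\,(-e_2+e_3).
\]
Thus $\dd L$ at $0$ is given by the two column vectors $(f_x(0),\enormal_x(0))$ and $(f_y(0),\enormal_y(0))$ in $\real^3\times\real^3$.

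Now I examine the two cases. If $t(0)\neq 0$, then $(f_y(0),\enormal_y(0))$ lies entirely in the subspace spanned by $(-e_2+e_3)$ in both factors, whereas the $\enormal$-component of $(f_x(0),\enormal_x(0))$ equals $-\tfrac{\theta_x(0)}{\sqrt{2}}e_1$, which is nonzero because weak non-degeneracy forces $\theta_x(0)\neq 0$ (see \eqref{regcond}). Any relation $\alpha(f_x(0),\enormal_x(0))+\beta(f_y(0),\enormal_y(0))=0$ then gives $\alpha\,\theta_x(0)=0$, hence $\alpha=0$; since $(f_y(0),\enormal_y(0))\neq 0$, also $\beta=0$. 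So $\dd L$ has rank $2$ and $L$ is an immersion at $0$, and therefore on a neighbourhood of $0$, so $f$ is a front there.

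If $t(0)=0$, then $f_y(0)=0$ and $\enormal_y(0)=0$, so the second column of $\dd L|_0$ vanishes entirely and the rank is at most $1$. (In fact it equals $1$, since $\enormal_x(0)\neq 0$ as above.) Consequently $L$ fails to be an immersion at $0$, and $f$ is not a front on any neighbourhood of $z=0$. This is the only real content of the proof; the statement then follows by combining the two directions. No substantive obstacle is expected — the work has already been done in Lemma \ref{enormalderlemma}, and the present lemma is a direct rank computation.
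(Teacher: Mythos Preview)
Your proof is correct and follows essentially the same route as the paper: compute $\dd f$ and $\dd \enormal$ at $z=0$ from the Bj\"orling data and Lemma~\ref{enormalderlemma}, then read off the rank of $\dd L=(\dd f,\dd\enormal)$. The only minor slip is terminological --- it is the \emph{non-degeneracy} assumption (not weak non-degeneracy) that gives $\theta_x(0)\neq 0$ via \eqref{regcond} --- but this does not affect the argument.
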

\begin{proof}
According the assumptions of the 
Bj\"orling construction, $\dd f = s(0)(-e_2 + e_3) \dd x + t(0) (-e_2 + e_3) \dd y$.
By Lemma \ref{enormalderlemma}, $\dd \enormal = \frac{-\theta_x}{2} e_1 \dd x + \frac{Ht(0)}{2\sqrt{2}}(e_2-e_3) \dd y$. It follows that the map $\dd L = (\dd f, \dd \enormal)$ 
has rank $2$ at $0$ if and only if $t(0) \neq 0$.
\end{proof}

%________________________________________________________________________
%%%%%%%%%%%%%%%%%%%%%%%%%%%%%%%%%%%%%%%%%%%%%%%%%%%%%%%%%%%%%%%%
\subsection{Cuspidal edges and swallowtails}

At a non-degenerate singular point, there is a well-defined
direction, that is a non-zero vector $\eta \in T_pU$, unique up to scale,
 such that $\dd f(\eta) = 0$, called the \emph{null direction}.  

A test for whether a singularity on a front is a swallowtail or a cuspidal edge is
given in \cite{krsuy}:
%________________________________________
\begin{proposition} \label{swallowtailprop}
(\cite{krsuy}). 
Let $f: U \to \real^3$ be a front, and $p$ a non-degenerate singular point.
Suppose that $\gamma: (-\delta, \delta) \to U$ is a local parameterisation 
of the singular curve, with parameter $x$ and tangent vector $\gamma^\prime$, 
and  $\gamma(0)= p$,.
 Then:
\begin{enumerate}
\item The image if $f$ in a neighbourhood of $p$ is diffeomorphic to a
cuspidal edge if and only if $\eta(0)$ is not proportional to $\gamma^\prime(0)$.
\item The image if $f$ in a neighbourhood of $p$ is diffeomorphic to a
swallowtail if and only if $\eta(0)$ is proportional to $\gamma^\prime(0)$ and
\bdm
\frac{\dd}{\dd x} \det \left(\gamma^\prime(x), \eta(x)\right)\Big|_{x=0} \neq 0.
\edm
\end{enumerate}
\end{proposition}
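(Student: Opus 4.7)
The plan is to reduce $f$ to a local normal form in each case via an $\mathcal{A}$-equivalence at the source and target, exploiting the front hypothesis that $L=(f,\enormal)$ is an immersion. First I would use the non-degeneracy of $p$ to note that the singular set $S(f)=\{\psi=0\}$ is a regular real-analytic curve near $p$ (since $\dd\psi(p)\neq 0$), which we may take parametrized by $\gamma$. Along $S(f)$ the kernel of $\dd f$ is exactly one-dimensional --- it cannot be all of $T_pU$ because $L$ is immersive and hence $\dd\enormal$ must cover the missing rank --- so a nowhere-vanishing null-direction field $\eta(x)$ is defined up to scale.

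For the cuspidal edge case I would introduce local coordinates $(x,u)$ adapted to $S(f)=\{u=0\}$, with $\gamma^\prime=\partial_x$ on this line. When $\eta(0)\not\parallel\gamma^\prime(0)$, one can arrange $\partial_u\big|_{u=0}$ to be proportional to $\eta$, so that $f_u(x,0)\equiv 0$ while $f_x(x,0)=\gamma^\prime(x)\neq 0$. The front condition forces $\enormal_u(p)\neq 0$, which in turn implies $f_{uu}(p)$ is linearly independent of $f_x(p)$ and that $f_{uuu}(p)$ has a non-zero component out of their span. A Taylor expansion in $u$ together with a smooth change of target coordinates and a reparametrization in $x$ then puts $f$ into the form $(u^2+O(u^3),\,u^3+O(u^4),\,x)$, which is $\mathcal{A}$-equivalent to $(u^2,u^3,x)$ by Arnold's standard reduction.

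For the swallowtail case $\eta(0)$ is proportional to $\gamma^\prime(0)$, so in the same coordinates $\eta=\alpha(x)\partial_x+\beta(x)\partial_u$ with $\beta(0)=0$; the hypothesis $\frac{\dd}{\dd x}\det(\gamma^\prime,\eta)\big|_{x=0}\neq 0$ is exactly $\beta^\prime(0)\neq 0$, so $\eta$ crosses $\gamma^\prime$ transversely at $p$. After a reparametrization of $S(f)$ and a further adjustment of $u$ I would match the jet of $f$ to that of the swallowtail $(3u^4+u^2 v,\,4u^3+2uv,\,v)$; the front condition controls the jet of $\enormal$ in the null direction and guarantees that the relevant jet lies in the Thom--Boardman stratum $\Sigma^{1,1,0}$, so Arnold's classification of Legendrian singularities supplies the normal form.

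The main obstacle is verifying that the explicit conditions listed are genuinely sufficient, i.e.\ that no extra non-degeneracy on higher-order jets is hidden in the normal-form theorems. This is precisely where immersiveness of $L$ is essential: it pins down the transverse jet of $\enormal$ along $S(f)$ and rules out worse degenerations such as cuspidal beaks or butterflies that would require additional vanishing of the $\det(\gamma^\prime,\eta)$ criterion. Once that is in place, the cuspidal edge statement is the generic (Morse-type) case and the swallowtail statement is the next codimension, both handled by the Martinet--Arnold machinery.
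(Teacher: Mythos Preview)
The paper does not prove this proposition at all: it is quoted verbatim as a result from \cite{krsuy} (Kokubu--Rossman--Saji--Umehara--Yamada) and used as a black box in the proof of Theorem~\ref{swallowtailthm}. There is therefore nothing in the present paper to compare your argument against.

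Your sketch is a plausible outline of the normal-form approach in \cite{krsuy}, and you have correctly identified that the front hypothesis (immersiveness of $L$) is what excludes the higher-codimension degenerations. If you want to turn this into a self-contained proof you would need to carry out the jet computations and the $\mathcal{A}$-equivalence explicitly rather than appealing to ``Arnold's standard reduction'' or ``the Martinet--Arnold machinery'' as finished facts; as written, the proposal is more of a roadmap than a proof. But for the purposes of this paper the proposition is simply imported, so no proof is expected here.
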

%_______________________________________________________________
We can use this test to prove the following result:

%________________________________________________________________
\begin{theorem}\label{swallowtailthm}
Let $f$ be a non-degenerate generalized $H$-surface constructed from the
Bj\"orling data in Theorem \ref{bjorling}. Then:
\begin{enumerate}
\item
$f$ is locally diffeomorphic to a cuspidal edge at $z_0=0$ if and only if 
\beqas
t(0) \neq 0 & \textup{and} & s(0) \neq 0.
\eeqas
\item
$f$ is locally diffeomorphic to a swallowtail at $z_0=0$ if and only if 
\bdm
\begin{array}{cccc}
t(0) \neq 0, &  s(0) =0 & \textup{and} & \frac{\dd}{\dd x}s(0) \neq 0.
\end{array}
\edm
\end{enumerate}
\end{theorem}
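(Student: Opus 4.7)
The plan is to reduce the claim to the criterion of Kokubu--Rossman--Saji--Umehara--Yamada stated as Proposition \ref{swallowtailprop}. First I would verify that the hypotheses of that proposition can be applied. The map $f$ is a frontal by the preceding lemma, and by Lemma \ref{frontcondlemma} it is actually a front on a neighbourhood of $0$ precisely when $t(0)\neq 0$; this condition is common to both parts of the theorem and will be needed throughout. Non-degeneracy of the singular point $0$, in the sense required by Proposition \ref{swallowtailprop}, is built into the hypothesis that $f$ is a non-degenerate generalized $H$-surface (translated through Lemma \ref{psiderivativelemma} and the Bj\"orling formulas for $r$ and $b$, this is just $\theta_x(0)\neq 0$).

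Next I would identify the singular curve and its tangent in a neighbourhood of $0$. By Theorem \ref{thm1}, item \ref{thm1item4}, the interval $J$ is contained in $C$, and by item \ref{thm1item5} non-degeneracy at $0$ forces $C$ to coincide with $J$ on some neighbourhood of $0$; hence a local parameterization of the singular curve is simply $\gamma(x)=(x,0)$, with $\gamma'(x)=\partial_x$.

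The null direction $\eta(x)$ at each singular point $(x,0)\in J$ is then read off directly from the Bj\"orling data: along $J$ we have $f_x = s(x)(-e_2+e_3)$ and $f_y = t(x)(-e_2+e_3)$, which are parallel vectors in $\LL^3$, so $\dd f$ annihilates
\[
\eta(x) = t(x)\,\partial_x - s(x)\,\partial_y,
\]
and (since $s,t$ do not vanish simultaneously) this is a well-defined non-zero null direction along $J$.

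Finally I would apply Proposition \ref{swallowtailprop}. The vector $\eta(0)=(t(0),-s(0))$ fails to be proportional to $\gamma'(0)=(1,0)$ exactly when $s(0)\neq 0$, giving the cuspidal edge criterion $t(0)\neq 0$ and $s(0)\neq 0$. In the swallowtail case one has $s(0)=0$, so $\eta(0)$ is proportional to $\gamma'(0)$; a short computation yields
\[
\det\bigl(\gamma'(x),\eta(x)\bigr) = \det\!\begin{pmatrix} 1 & t(x) \\ 0 & -s(x)\end{pmatrix} = -s(x),
\]
whose derivative at $x=0$ is $-s'(0)$. Thus the swallowtail condition becomes precisely $t(0)\neq 0$, $s(0)=0$, and $s'(0)\neq 0$, matching the statement. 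The proof is largely bookkeeping once the KRSUY criterion is in hand; the only delicate point, which I would spell out carefully, is checking that the singular curve is indeed parameterized by $x$ on a full neighbourhood of $0$ (so that $\gamma'$ and $\eta$ are unambiguous), which is where the non-degeneracy hypothesis $\theta_x(0)\neq 0$ is essentially used via Theorem \ref{thm1}, item \ref{thm1item5}.
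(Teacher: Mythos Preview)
Your proposal is correct and follows essentially the same route as the paper's own proof: invoke Lemma \ref{frontcondlemma} for the front condition $t(0)\neq 0$, read off the null direction $\eta(x)=t(x)\partial_x-s(x)\partial_y$ from the parallel vectors $f_x,f_y$ along $J$, take $\gamma(x)=(x,0)$, and feed this into Proposition \ref{swallowtailprop}. If anything, you are more explicit than the paper in justifying (via Theorem \ref{thm1}, item \ref{thm1item5}) that the singular set really coincides with $J$ near $0$, and in writing out the determinant $\det(\gamma',\eta)=-s$.
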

%____
\begin{proof}
By Lemma \ref{frontcondlemma}, $f$ is a front at $z=0$ if and only if $t(0) \neq0$, so we can use the proposition above. We also have, along $J$,
\bdm
 f_x  = s  F_0 \, (-e_2 + e_3) \, F_0^{-1}, \hspace{1cm} 
   f_y = t F_0 \, (-e_2 + e_3) \, F_0^{-1}, 
\edm
and the null direction is
\beq  \label{nulldirection}
\eta (x) = t(x) \frac{\partial}{\partial x} - s(x) \frac{\partial}{\partial y}.
\eeq
Writing $x+iy = [x,y]^T$, the singular curve is given by $\gamma(x) = [x,0]^T$ and 
the null direction by $\eta(x) = [t(x),-s(x)]^T$, and so 
 the criteria in Proposition \ref{swallowtailprop} imply the claim.
\end{proof}
%____________________________________________________________________

\subsection{Cuspidal cross caps}

 From \cite{fsuy} (Theorem 1.4), one has the following test for whether a non-degenerate frontal
 is locally  a cuspidal cross cap:
 
 %_________________________________________________
\begin{theorem} (\cite{fsuy}.) \label{fsuythm}
Let $f: U \to \real^3$ be a  frontal, with Legendrian lift $L = (f, \enormal)$,
 and let $z_0$ be a non-degenerate singular point.
Let $X: V \to \real^3$ be an arbitrary differentiable function on a neighbourhood $V$ of $z_0$
such that:
\begin{enumerate}
\item  \label{transcond1}
$X$ is orthogonal  to  $\enormal$.
\item \label{transcond2}
$X(z_0)$ is transverse to the subspace $f_*(T_{z_0}(V))$.
\end{enumerate}
Let  $x$ be the parameter for the singular curve, and set
\bdm
\tilde \psi (x) := \left< \enormal, \, \dd X  (\eta) \right>_E \big|_x.
\edm
The frontal $f$ has a \emph{cuspidal
cross cap} singularity at  $z=z_0$ if and only:
\begin{itemize}
\item [(A)] \label{conda} $\eta(z_0)$ is transverse to the singular curve;
\item [(B)] \label{condb} $\tilde \psi(z_0) = 0$ and $\tilde \psi^\prime(z_0) \neq 0$.
\end{itemize}
\end{theorem}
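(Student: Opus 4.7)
The plan is to prove both directions of the \emph{iff} statement by comparing the germ of $f$ at $z_0$ with the standard cuspidal cross cap normal form $g(u,v) = (u,v^2,uv^3)$. The forward direction amounts to checking the criterion directly on $g$; the substantive content is the converse, where one must produce local diffeomorphisms in source and target that carry $f$ to $g$.

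First I would verify that the criterion is well posed, i.e., that the value of $\tilde\psi$ at $z_0$ does not depend on the choice of transverse field $X$ (only on its class modulo $f_\ast(T_{z_0}V) + \real\,\enormal$). This is a short linear algebra check using conditions \ref{transcond1} and \ref{transcond2} together with the fact that, at a non-degenerate singular point, $\eta$ spans the kernel of $\dd f_{z_0}$. The same check shows that the vanishing order $\tilde\psi'(z_0)\ne 0$ is also intrinsic. I would then compute $\tilde\psi$ for the model $g(u,v)=(u,v^2,uv^3)$ with the canonical choices $\enormal = \frac{1}{\sqrt{1+9v^4u^2+\ldots}}(\ldots)$, $\eta = \partial_v$, and a convenient $X$; a direct computation gives $\tilde\psi(0)=0$ and $\tilde\psi'(0)\ne 0$, establishing the easy direction.

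For the converse, I would use non-degeneracy and condition~(A) to pick adapted coordinates $(x,y)$ on $V$ centred at $z_0$ so that the singular curve $S$ is $\{y=0\}$ and the null vector field extends as $\eta = \partial_y$ along $S$. Because $\dd f(\partial_y)\equiv 0$ on $S$, the Taylor expansion of $f$ at a point $(x,0)$ begins $f(x,y)=f(x,0)+\tfrac12 y^2 f_{yy}(x,0)+\tfrac16 y^3 f_{yyy}(x,0)+O(y^4)$, and non-degeneracy plus the frontal condition force $f_{yy}(x,0)$ to be a non-zero vector parallel to $\enormal\times \gamma'(x)$ in the Euclidean sense. After a target isometry normalising the tangent frame of $f_0=f|_S$, the map has the shape of a book opening in the $f_{yy}$-direction along $S$. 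At this stage the tangential behaviour of $f_{yyy}(x,0)$ encodes everything: its component transverse to the page of the book produces the ``twist'' that distinguishes a cuspidal cross cap from a cuspidal edge.

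The hard step is linking condition~(B) precisely to that twist and then invoking an $\mathcal{A}$-equivalence theorem. I would show, after a direct computation using $X\perp\enormal$, that $\tilde\psi(x) = \langle \enormal, X_y\rangle(x,0)$ measures exactly the component of $f_{yyy}(x,0)$ lying in the plane $\enormal^\perp \cap \ker(\text{proj to } f_x)^\perp$; hence $\tilde\psi(0)=0$ kills the ``cuspidal edge'' coefficient, while $\tilde\psi'(0)\ne 0$ gives first-order twisting of the cuspidal half-plane along $S$. With this identification, I would finish by appealing to the Martinet-type classification of corank-one frontal singularities (as used in \cite{fsuy}): the germ of $f$ at $z_0$ is $\mathcal{A}$-equivalent to $g$ provided the explicit invariants of order up to three match, and the calculation above supplies exactly those invariants. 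The main obstacle is the last step, namely producing the source-and-target diffeomorphisms: the cleanest route is to reduce to the prepared normal form $(x, y^2 a(x,y^2), y^3 b(x,y^2))$ with $a(0,0)\ne 0$ and $b(0,0)\ne 0$ by a change of variables, and then apply a classical $\mathcal{A}$-determinacy result (e.g.\ Bruce--West or the formulation stated in \cite{fsuy}) whose $3$-jet hypothesis is precisely (A) together with (B).
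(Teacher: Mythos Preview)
The paper does not contain a proof of this theorem. It is quoted from \cite{fsuy} (specifically, the text preceding the statement says ``From \cite{fsuy} (Theorem~1.4), one has the following test\ldots''), and is used purely as a black box in the proof of Theorem~\ref{crosscapsthm}. So there is no ``paper's own proof'' to compare your proposal against.

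That said, your outline is broadly in the right spirit for this type of result, but it is not yet a proof. A few concrete issues: (i)~your well-posedness claim is stated too strongly --- the value $\tilde\psi(x)$ itself certainly depends on the choice of $X$ (rescaling $X$ rescales $\tilde\psi$); what is intrinsic is only whether $\tilde\psi$ vanishes and whether the vanishing is of first order, and you should phrase it that way. (ii)~The identification you assert, that $\tilde\psi(x)=\langle\enormal,X_y\rangle(x,0)$ measures ``exactly the component of $f_{yyy}(x,0)$ lying in'' a certain plane, is the heart of the matter and is not substantiated; note that $\tilde\psi$ is defined in terms of derivatives of $X$, not of $f$, so you need to use conditions~\ref{transcond1} and~\ref{transcond2} together with the frontal relation $\langle\enormal,f_x\rangle=\langle\enormal,f_y\rangle=0$ to convert $\langle\enormal,\dd X(\eta)\rangle$ into a statement about third-order jets of $f$. (iii)~The final step --- producing the source-and-target diffeomorphisms via an $\mathcal{A}$-determinacy theorem --- is exactly what \cite{fsuy} carry out; you are right that this is the main obstacle, and your proposal defers it to the literature rather than doing it. If you want an actual proof, consult \cite{fsuy} directly.
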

%______________________________________

%_______________________________________
\begin{theorem} \label{crosscapsthm}
Let $f$ be a non-degenerate $H$-surface constructed from the
Bj\"orling data in Theorem \ref{bjorling}. Then 
$f$ is locally diffeomorphic to a cuspidal cross cap around $z=0$ if and only
if the following conditions hold:
\bdm
\begin{array} {cccc}
s  (0) \neq 0, &  t (0) = 0 & \textup{and} &
     \frac{\dd }{\dd x} t(0) \neq 0.
\end{array}
\edm
\end{theorem}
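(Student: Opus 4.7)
Since the hypothesis $t(0)=0$ means $f$ fails to be a front at the origin by Lemma \ref{frontcondlemma}, the swallowtail/cuspidal-edge test is unavailable, and the plan is to apply the cuspidal cross cap criterion of Theorem \ref{fsuythm}. The standing non-degeneracy of $f$ corresponds, via the lemma immediately preceding Lemma \ref{frontcondlemma} combined with Lemma \ref{psiderivativelemma}, to $\Im b(0) - r(0) \ne 0$; from $b = iHt/2$ and $r = (\theta_x + Ht)/2$ in Theorem \ref{bjorling} this is exactly $\theta_x(0) \ne 0$. Condition (A) of Theorem \ref{fsuythm} asks that the null direction $\eta(x) = t(x)\partial_x - s(x)\partial_y$, from (\ref{nulldirection}), be transverse to the singular curve $\{y=0\}$ at the origin, which holds precisely when $s(0)\ne 0$.

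For condition (B) I will take the real analytic vector field $X := e_1 - \langle e_1, \enormal\rangle_E\, \enormal$, which is automatically orthogonal to $\enormal$; by Lemma \ref{normallemma}, $\enormal(0)=\tfrac{1}{\sqrt{2}}(e_2+e_3)$, so $X(0)=e_1$, and since $f_y(0)=0$ forces $f_*(T_0V)=\real\cdot(-e_2+e_3)$, transversality of $X(0)$ is immediate. Using $\langle \enormal, X\rangle_E \equiv 0$ and the form of $\eta$, the test function rewrites along $J$ as
\[
\tilde\psi(x) = -t(x)\langle \enormal_x(x,0), X(x,0)\rangle_E + s(x)\langle \enormal_y(x,0), X(x,0)\rangle_E.
\]
The derivative $\enormal_x$ along $J$ is obtained directly by differentiating the closed-form expression $\enormal(x,0) = \tfrac{1}{\sqrt{2}}(-\cos\theta\, e_1 - \sin\theta\, e_2 + e_3)$ supplied by Lemma \ref{normallemma}.

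The main technical step will be to extend Lemma \ref{enormalderlemma}---which provides $\enormal_y$ only at the distinguished origin---to all of $J$. I plan to do this by applying, at each $x_0\in J$, the Lorentz rotation about $e_3$ that sends $(\cos\theta(x_0), \sin\theta(x_0), 1)$ to $(0,-1,1)$, together with the parameter shift $x\mapsto x-x_0$. By Björling uniqueness (Theorem \ref{bjorling}), the resulting Björling data produce the rotated and translated surface, so Lemma \ref{enormalderlemma} applies at its new origin, and transforming back yields
\[
\enormal_y(x,0) = \tfrac{Ht(x)}{2\sqrt{2}}\bigl(-\cos\theta(x)\, e_1 - \sin\theta(x)\, e_2 - e_3\bigr).
\]

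Substituting into $\tilde\psi$ at $x=0$, where $\theta(0)=-\pi/2$ and $X(0)=e_1$, the $\enormal_y$-contribution vanishes because $\langle e_2-e_3, e_1\rangle_E=0$, giving $\tilde\psi(0) = t(0)\theta_x(0)/\sqrt{2}$. Differentiating $\tilde\psi$ in $x$ under the assumption $t(0)=0$, both $s$-terms drop out at $x=0$: the first because $\enormal_y(0)=0$, and the second because $\enormal_{yx}(0) = \tfrac{Ht'(0)}{2\sqrt{2}}(e_2-e_3)$ is again orthogonal to $e_1$, leaving $\tilde\psi'(0) = t'(0)\theta_x(0)/\sqrt{2}$. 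Since $\theta_x(0)\ne 0$, the equivalences $\tilde\psi(0)=0 \Leftrightarrow t(0)=0$ and $\tilde\psi'(0)\ne 0 \Leftrightarrow t'(0)\ne 0$, combined with condition (A), give the theorem in both directions. The hard part of the argument is the $J$-wide formula for $\enormal_y$; the rotation-and-reparametrization reduction above avoids redoing the Iwasawa-factorization bookkeeping of Lemma \ref{enormalderlemma} pointwise.
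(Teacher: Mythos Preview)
Your proof is correct and reaches the same conclusion, but the route differs from the paper's in a meaningful way. The paper chooses a frame-built vector field $X = -F_\omega(c e_1 + a e_3)F_\omega^{-1}$ (with $a,c$ coming from the matrix $Y$ in Lemma \ref{normallemma}), and then computes $\langle \enormal, \dd X(\eta)\rangle_E$ directly along $J$ using the Maurer--Cartan form (\ref{uhatandvhat}) of the singular frame, obtaining the closed formula $\tilde\psi(x) = -\tfrac{1}{2}\,t(x)\,\theta_x(x)$ valid at \emph{every} point of $J$, from which (B) follows at once. Your argument instead takes the simpler constant-projected field $X = e_1 - \langle e_1,\enormal\rangle_E\,\enormal$, rewrites $\tilde\psi$ as $-\langle \dd\enormal(\eta),X\rangle_E$, and then shifts the burden to knowing $\enormal_y$ along $J$; you obtain this by the elegant symmetry device of rotating about $e_3$, reparametrizing, and invoking Bj\"orling uniqueness so that Lemma \ref{enormalderlemma} applies at each shifted origin.

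What each approach buys: the paper's computation is self-contained and yields the full formula for $\tilde\psi$ without appealing back to the uniqueness theorem, but it requires carrying the commutator $[F_\omega^{-1}\dd F_\omega, e_1]$ through the calculation. Your approach avoids that Maurer--Cartan bookkeeping entirely and is conceptually cleaner, but it depends on the (correct but unstated) facts that a rotation about $e_3$ is both a Lorentzian and Euclidean isometry, that left-multiplying $\phihat$ by the corresponding diagonal element of $\uu$ preserves the cell decomposition, and hence that the rotated, shifted surface is again a generalized $H$-surface to which uniqueness applies. Two very small expository points: your transversality check for $X(0)$ invokes $f_y(0)=0$ before $t(0)=0$ is established, but since $f_*(T_0V)\subset\real\cdot(-e_2+e_3)$ regardless, the conclusion holds either way; and it would strengthen the write-up to state explicitly why the rotated surface remains in the class of generalized $H$-surfaces.
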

%____________
\begin{proof}
In a neighbourhood of
$0$, the singular curve is given by an interval $J =(x_1,x_2)$ of the real line. 
Recall from the proof of Lemma \ref{enormalderlemma} that we found the following
formula for $\enormal$: 
\beqas
\enormal = \beta \Ad_{e_3}(F_\omega Y F_\omega^{-1}) && Y = a e_1+  b e_2 + c e_3, \\
a = \Im (\mu) \rho Z_1,  & b = - (\Re (\mu) + \rho) \rho Z_1, 
   & c = Z_1 + Z_2,
\eeqas
and along $J$ we have:
$Z_1 = \frac{1}{\sqrt{2}}$, $Z_2 = 0$,  
$a=0$, $b = -1/\sqrt{2}$ and $c = 1/\sqrt{2}$
so that $Y = \frac{1}{\sqrt{2}} (-e_2+e_3)$ for real values
of $z$.

 We will apply Theorem \ref{fsuythm} with 
 the vector field defined by the cross product:
\beqas
X &=& 
\left( \Ad_{e_3} F_\omega \, e_2 \, F_\omega^{-1} \right) \, \times \, \left(  \Ad_{e_3}F_\omega Y F_\omega^{-1} \right)   \nonumber \\
  &=&  -\frac{1}{2} 
 F_\omega  [e_2  \, , \, a e_1 + b e_2 + c e_3 ]F_\omega^{-1} \nonumber \\
 &=& - F_\omega \left( c e_1 + a e_3   \right) F_\omega^{-1}.  
 \eeqas
 $X$ is orthogonal to $\enormal$ because $\Ad_{e_3}F_\omega Y F_\omega^{-1}$ is proportional
to $\enormal$.
Along $J$  we have
\beqas
 f_x  = s  F_0 (-e_2+e_3) F_0^{-1}, \hspace{1cm} 
   f_y = t F_0 (-e_2+e_3) F_0^{-1}, \hspace{1cm}
 X =  
 -\frac{1}{\sqrt{2}} F_0 e_1 F_0^{-1}, 
\eeqas
so $X$ is transverse to $f_*(T_{z_0}(V))$.
That is, $X$ satisfies conditions \ref{transcond1} and \ref{transcond2}
of Theorem \ref{fsuythm}.

Now consider the conditions (A) and (B).
The null direction along $J$ is given by
$\eta = t \frac{\partial}{\partial x} - s \frac{\partial}{\partial y}$,
and this is transverse to the singular curve at $z_0=0$ if and only if 
$s(0) \neq 0$, so our first condition is equivalent to condition (A).

 To investigate $\tilde \psi$, we need an expression for  $\left< \enormal \, , \, \dd X \right>_E$ along $J$.
Now
\bdm
\dd X = -c \, \dd (F_\omega \, e_1 \, F_\omega^{-1}) -
  a  \, \dd (F_\omega \, e_3 \, F_\omega^{-1} ) - 
      \dd c \,  \Ad_{F_0} e_1 - \dd a \,  \Ad_{F_0} e_3.
\edm
Along $J$ we have $\dd a = \dd (\Im \mu) \cdot \frac{1}{\sqrt {2}} \cdot 1 = 0$,
because we earlier computed $\dd \mu = H t \dd y$ which is real.  We also have
$a =0$, and $\left< \enormal \, , \, \Ad_{F_0} e_1 \right> _E = 
\left< \frac{1}{\sqrt{2}}(e_2 + e_3) \,,\, e_1 \right> _E = 0$. We used that
$F_0$ takes values in $SU(2)$ and so preserves the Euclidean inner product.
Hence only the first term in the above expression for $\dd X$ contributes to
$\left< \enormal \, , \, \dd X \right>_E$:
\bdm
\left< \enormal \, , \, \dd X \right>_E \big |_J = 
- \frac{1}{\sqrt{2}}\left< \enormal \, , \, \dd (F_\omega e_1 F_\omega^{-1}) \right> \big |_J.
\edm
To compute this, we use: 
\bdm
F_\omega^{-1} (F_\omega)_x =  U_\omega +  V_\omega, \hspace{1cm} F_\omega^{-1} (F_\omega)_y = i(U_\omega -  V_\omega),
\edm
where, from equation (\ref{uhatandvhat}), at $\lambda = 1$, 
\bdm
U_\omega + V_\omega = \bbar -2i \Im a + ir & 2 i \Im a + \bar b \\
          -2i \Im a + b & 2 i \Im a -i r \ebar, \hspace{.3cm}
i(U_\omega - V_\omega) = \bbar - 2 i \Re a & 2 i \Re a - i \bar b \\
            - 2 i \Re a + i b & 2 i \Re a \ebar.
\edm
With this and  $s = \frac{-4 \Re a}{H}$, $t = \frac{4 \Im a}{H}$,  $b = \frac{1}{2}iHt$, and $r = \frac{1}{2}(\theta_x + Ht)$ one obtains along $J$
\beqas
(F_\omega e_1 F_\omega^{-1})_x &=&  F_0 \, [ \, U_\omega + V_\omega, e_1 \,]\, F_0^{-1} \\
&=& F_0 \bbar 4i \Im a -2i \Im b & -4 i \Im a + 2 ir \\
   4i \Im a - 2 i r & -4i \Im a + 2i \Im b \ebar F_0^{-1} \\
   &=& F_0 \,\theta_x e_2 \, F_0^{-1},
\eeqas
\beqas
(F_\omega e_1 F_\omega^{-1})_y &=& F_0 \,[\,i(U_\omega-V_\omega), e_1 \,]\, F_0^{-1}\\
&=& F_0 \bbar 4 i \Re a - 2 i \Re b & - 4 i \Re a \\
        4 i \Re a & -4 i \Re a + 2 i \Re b \ebar F_0^{-1}\\
   &=& F_0 \, Hs(e_2 -e_3) \, F_0^{-1}.
\eeqas
Hence we obtain the following expression along $J$,
\beqas  
\tilde \psi &=& \left<\enormal, \dd X(\eta) \right>_E \big|_x \\
&=& - \frac{1}{\sqrt{2}} \frac{1}{\sqrt{2}} \left< (\Ad_{e_3} \Ad_{F_0}(-e_2 + e_3) \,,\, tX_x - s X_y \right>_E \\
&=& -\frac{1}{{2}} \left< e_2 + e_3 \,,\, t \theta_x e_2 -s^2H(e_2 -e_3) \right>_E\\
 &=&  -\frac{1}{2} t \, \theta_x.
\eeqas

 Condition (B) of Theorem \ref{fsuythm} is thus equivalent to the pair
of equations
\beqas
t \, \theta_x \Big|_{x=0} = 0, && 
\left[\frac{\dd t }{\dd x} \, \theta_x + t \, \frac{\dd \theta_x}{\dd x}\right]_{x=0} \neq 0 .
\eeqas
Since $\theta_x \neq 0$, this pair of equations is equivalent to 
$t(0) = 0$ and $\frac{\dd t}{\dd x}(0) \neq 0$.
\end{proof}

%%%%%%%%%%%%%%%%%%%%%%%%%%%%%%%%%%%%%%%%%%%%%%%%%%%%%%%%%%%%%%%%%%%%%%%%%%%%

\providecommand{\bysame}{\leavevmode\hbox to3em{\hrulefill}\thinspace}
\providecommand{\MR}{\relax\ifhmode\unskip\space\fi MR }
% \MRhref is called by the amsart/book/proc definition of \MR.
\providecommand{\MRhref}[2]{%
  \href{http://www.ams.org/mathscinet-getitem?mr=#1}{#2}
}
\providecommand{\href}[2]{#2}


\begin{thebibliography}{10}

\bibitem{akutagawanishikawa}
K~Akutagawa and S~Nishikawa,
\emph{The Gauss map and spacelike surfaces with prescribed mean curvature
  in {M}inkowski 3-space}, Tohoku Math. J. (2) \textbf{42} (1990), 67--82.
  
\bibitem{aliaschavesmira2003}
L~J Al\'ias, R~M~B Chaves, and P~Mira, \emph{Bj\"orling problem for maximal
  surfaces in {L}orentz-{M}inkowski space}, Math. Proc. Cambridge Philos. Soc.
  \textbf{134} (2003), 289--316.

\bibitem{arnold1990}
V~I Arnold, \emph{Singularities of caustics and wave fronts}, Mathematics and
  its Applications (Soviet Series), vol.~62, Kluwer Academic Publishers Group,
  Dordrecht, 1990.

\bibitem{bjorling}
D~Brander and J~F Dorfmeister, \emph{The {B}j\"orling problem for non-minimal
  constant mean curvature surfaces}, Comm. Anal. Geom. \textbf{18} (2010), 171--194.

\bibitem{brs}
D~Brander, W~Rossman, and N~Schmitt, \emph{Holomorphic representation of
  constant mean curvature surfaces in {M}inkowski space: Consequences of
  non-compactness in loop group methods}, Adv. Math. \textbf{223} (2010),
  949--986.

\bibitem{dhkw}
U~Dierkes, S~Hildebrandt, A~K\"uster, and O~Wohlrab, \emph{Minimal surfaces.
  {I}. {B}oundary value problems}, Grundlehren der {M}athematischen
  {W}issenschaften, vol. 295, Springer-Verlag, 1992.

\bibitem{DorPW}
J~Dorfmeister, F~Pedit, and H~Wu, \emph{Weierstrass type representation of
  harmonic maps into symmetric spaces}, Comm. Anal. Geom. \textbf{6} (1998),
  633--668.

\bibitem{fl2007}
I~Fernandez and F~J Lopez, \emph{Periodic maximal surfaces in the
  {L}orentz-{M}inkowski space ${L}^3$}, Math. Z. \textbf{256} (2007), 573--601.

\bibitem{fls2005}
I~Fernandez, F~J Lopez, and R~Souam, \emph{The space of complete embedded
  maximal surfaces with isolated singularities in the 3-dimensional
  {L}orentz-{M}inkowski space}, Math. Ann. \textbf{332} (2005), 605--643.

\bibitem{fls2007}
\bysame, \emph{The moduli space of embedded singly periodic maximal surfaces
  with isolated singularities in the {L}orentz-{M}inkowski space ${L}^3$},
  Manuscripta Math. \textbf{122} (2007), 573--601.

\bibitem{fsuy}
S~Fujimori, K~Saji, M~Umehara, and K~Yamada, \emph{Singularities of maximal
  surfaces}, Math. Z. \textbf{259} (2008), 827--848.

\bibitem{inoguchi}
J~Inoguchi, \emph{Surfaces in {M}inkowski 3-space and harmonic maps}, Harmonic
  morphisms, harmonic maps, and related topics (Brest, 1997), 249--270, Chapman
  \& Hall/CRC Res. Notes Math., \textbf{413}, Chapman \& Hall/CRC, Boca Raton,
  FL, 2000.

\bibitem{ishimach}
G~Ishikawa and Y~Machida, \emph{Singularities of improper affine spheres and
  surfaces of constant {G}aussian curvature}, Internat. J. Math. \textbf{17}
  (2006), 269--293.

\bibitem{kenmotsu}
K~Kenmotsu, \emph{Weierstrass formula for surfaces of prescribed mean
  curvature}, Math. Ann. \textbf{245} (1979), 89--99.
  
\bibitem{kimyang2007}
Y~W Kim and S~D Yang, \emph{Prescribing singularities of maximal surfaces via a
  singular {B}j\"orling representation formula}, J. Geom. Phys. \textbf{57}
  (2007), 2167--2177.

\bibitem{krsuy}
M~Kokubu, W~Rossman, K~Saji, M~Umehara, and K~Yamada, \emph{Singularities of
  flat fronts in hyperbolic space}, Pacific J. Math. \textbf{221} (2005),
  303--351.

\bibitem{suy}
K~Saji, M~Umehara, and K~Yamada, \emph{The geometry of fronts}, Ann. of Math.
  (2) \textbf{169} (2009), 491--529.

\bibitem{umeda}
Y~Umeda, \emph{Constant-mean-curvature surfaces with singularities in {M}inkowski
  3-space}, Experiment. Math. \textbf{18} (2009), 311--323. 

\bibitem{umyam2006}
M~Umehara and K~Yamada, \emph{Maximal surfaces with singularities in
  {M}inkowski space}, Hokkaido Math. J. \textbf{35} (2006), 13--40.

\bibitem{whitney1944}
H~Whitney, \emph{The singularities of a smooth $n$-manifold in $(2n-1)$-space},
  Ann. Math. \textbf{45} (1944), 247--293.

\end{thebibliography}
\end{document}